\newtheorem{thm}{Theorem}[section]
\newtheorem*{thm*}{Theorem}
\newtheorem{cor}[thm]{Corollary}
\newtheorem{lem}[thm]{Lemma}
\newtheorem{prop}[thm]{Proposition}
\theoremstyle{definition}
\newtheorem{conv}[thm]{Convention}
\newtheorem{dfn}[thm]{Definition}
\newtheorem*{dfn*}{Definition}
\newtheorem{rem}[thm]{Remark}
\newtheorem{ques}[thm]{Question}
\newtheorem*{conj*}{Conjecture}
\newtheorem{ex}[thm]{Example}
\newtheorem{nota}[thm]{Notation}
\theoremstyle{remark}
\newtheorem*{ac}{Acknowledgments}
\newtheorem{claim}{Claim}
\newtheorem*{claim*}{Claim}
\renewcommand{\qedsymbol}{$\blacksquare$}
\numberwithin{equation}{thm}
\def\A{\mathcal{A}}
\def\a{\mathfrak{a}}
\def\ann{\operatorname{ann}}
\def\Ass{\operatorname{\mathrm{Ass}}}
\def\b{\mathfrak{b}}
\def\BB{\mathrm{B}}
\def\C{\mathcal{C}}
\def\cl{\operatorname{cl}}
\def\cof{\operatorname{\mathsf{Cof}}}
\def\D{\mathsf{D}}
\def\d{\mathrm{D}}
\def\Db{\mathsf{D^b}}
\def\Dp{\mathsf{D^+}}
\def\E{\mathbb{E}}
\def\e{\mathrm{E}}
\def\EE{\mathbf{E}}
\def\ee{{}^{\mathsf{s}}\mathbb{E}}
\def\Ext{\mathrm{Ext}}
\def\fid{\mathrm{fid}}
\def\ge{\geqslant}
\def\gldim{\operatorname{gldim}}
\def\grade{\operatorname{grade}}
\def\H{\mathrm{H}}
\def\hh{\operatorname{\mathcal{H}}}
\def\height{\operatorname{ht}}
\def\holim{\operatorname{\mathrm{holim}}}
\def\Hom{\mathrm{Hom}}
\def\Inj{\operatorname{\mathsf{Inj}}}
\def\le{\leqslant}
\def\LL{\mathbf{L}}
\def\ltensor{\otimes^{\bf{L}}}
\def\M{\mathcal{M}}
\def\Max{\operatorname{Max}}
\def\Mod{\operatorname{\mathsf{Mod}}}
\def\NN{\mathbb{N}}
\def\p{\mathfrak{p}}
\def\q{\mathfrak{q}}
\def\rad{\operatorname{rad}}
\def\Spec{\operatorname{Spec}}
\def\SS{\mathbf{S}}
\def\supp{\operatorname{supp}}
\def\Supp{\operatorname{Supp}}
\def\Tor{\mathrm{Tor}}
\def\V{\mathrm{V}}
\def\X{\mathcal{X}}
\def\xx{\boldsymbol{x}}
\def\Y{\mathcal{Y}}
\def\Z{\mathbb{Z}}
\def\ZZ{\mathrm{Z}}
\begin{document}
\title{Filtrations in module categories, derived categories and prime spectra}
\author{Hiroki Matsui}
\address[H. Matsui]{Graduate School of Mathematical Sciences\\ University of Tokyo, 3-8-1 Komaba, Meguro-ku, Tokyo 153-8914, Japan}
\email{mhiroki@ms.u-tokyo.ac.jp}
\author{Ryo Takahashi}
\address[R. Takahashi]{Graduate School of Mathematics, Nagoya University, Furocho, Chikusaku, Nagoya 464-8602, Japan}
\email{takahashi@math.nagoya-u.ac.jp}
\urladdr{http://www.math.nagoya-u.ac.jp/~takahashi/}
\subjclass[2010]{13C60, 13D09, 13D45}
\keywords{bilocalizing subcategory, bireflective Giraud subcategory, bismashing subcategory, derived category, $\E$-closed subcategory, $\ee$-closed subcategory, generalization-closed subset, local cohomology, localizing subcategory, module category, $n$-coherent subset, $n$-consistent subcategory, $n$-uniform subcategory, $n$-wide subcategory, specialization-closed subset, (small) support}
\thanks{Matsui was partly supported by JSPS Grant-in-Aid for JSPS Fellows 19J00158.
Takahashi was partly supported by JSPS Grants-in-Aid for Scientific Research 16K05098 and JSPS Fund for the Promotion of Joint International Research 16KK0099.}
\begin{abstract}
Let $R$ be a commutative noetherian ring.
The notion of $n$-wide subcategories of $\Mod R$ is introduced and studied in Matsui--Nam--Takahashi--Tri--Yen in relation to the cohomological dimension of a specialization-closed subset of $\Spec R$.
In this paper, we introduce the notions of $n$-coherent subsets of $\Spec R$ and $n$-uniform subcategories of $\D(\Mod R)$, and explore their interactions with $n$-wide subcategories of $\Mod R$.
We obtain a commutative diagram which yields filtrations of subcategories of $\Mod R$, $\D(\Mod R)$ and subsets of $\Spec R$ and complements classification theorems of subcategories due to Gabriel, Krause, Neeman, Takahashi and Angeleri H\"ugel--Marks--{\v S}{\v{t}}ov{\'{\i}}{\v{c}}ek--Takahashi--Vit\'{o}ria.
\end{abstract}
\maketitle
\tableofcontents
\section{Introduction}

A {\em localizing subcategory} of an abelian category is defined to be a full subcategory closed under coproducts, extensions, subobjects and quotient objects.
This notion was introduced by Gabriel \cite{Gab} to study the reconstruction problem of a noetherian scheme from the category of quasi-coherent sheaves.
Since then, localizing subcategories of an abelian category have been explored by a lot of authors to classify them and study abelian categories geometrically; see \cite{Gar,GP1,GP2,Her,K,hcls}.
A {\em wide subcategory} of an abelian category is by definition a full subcategory closed under extensions, kernels and cokernels.
In recent years, wide subcategories have actively been studied in representation theory of algebras; see \cite{AP,BM,Kra08,MS,wide,Y}.

A {\em localizing subcategory} of a triangulated category is defined as a full triangulated subcategory closed under coproducts.
As with localizing subcategories of an abelian category, one of the main topics in the study of localizing subcategories of a triangulated category is to classify them.
The first classification theorem has been obtained by Neeman \cite{Nee} for the unbounded derived category of a commutative noetherian ring.
Nowadays, localizing subcategories of a triangulated category are widely and deeply investigated in many areas of mathematics; see \cite{BCR, BIK, Dla1, Dla2, HS} for instance.

Let $R$ be a commutative noetherian ring.
Gabriel \cite{Gab} gives a complete classification of the localizing subcategories of the module category $\Mod R$ by specialization-closed subsets of $\Spec R$.
Krause \cite{Kra08} introduces the notion of a coherent subset of $\Spec R$ to classify the wide subcategories of $\Mod R$ closed under direct sums, which extends Gabriel's classification theorem.
Takahashi \cite{hcls} classifies the E-stable\footnote{This means $\ee$-closed in our sense.} subcategories of $\Mod R$ closed under direct sums and summands, extending Krause's classification theorem.

In \cite{cd} the notion of an $n$-wide subcategory of $\Mod R$ is introduced for each $n\in\NN$.
In this paper we extend this to $n=\infty$.
Also, we introduce the notions of an $n$-uniform subcategory of the derived category $\D(\Mod R)$ of $\Mod R$, and an $n$-coherent subset of $\Spec R$ for each $n\in\NN\cup\{\infty\}$.
The {\em (small) support} in the sense of \cite{Fox}, denoted by $\supp$, is our fundamental tool.
We prove the following theorem; the notation and terminology are explained in Convention \ref{2.1} and Definitions \ref{2.2}, \ref{2.4}, \ref{3.1}, \ref{3.4}, \ref{5.1}, \ref{5.3}, \ref{6.3}.

\begin{thm}[Theorems \ref{7}, \ref{8-2}, \ref{31} and Corollaries \ref{36'}, \ref{35}, \ref{25}, \ref{34}]\label{10a} 
Let $R$ be a commutative noetherian ring, and let $n\in\NN\cup\{\infty\}$.
Define the sets
$$
\begin{array}{ll}
\EE=\{\text{$(\ee,\oplus,\ominus)$-closed subcategories of $\Mod R$}\}, & \EE_n=\{\text{$n$-wide $(\E,\oplus)$-closed subcategories of $\Mod R$}\},\\
\SS=\{\text{subsets of $\Spec R$}\}, & \SS_n=\{\text{$n$-coherent subsets of $\Spec R$}\},\\
\LL=\{\text{localizing subcategories of $\D(\Mod R)$}\}, & \LL_n=\{\text{$n$-uniform localizing subcategories of $\D(\Mod R)$}\},\\
\EE'=\{\text{$(\ee,\oplus,\ominus,\Pi)$-closed subcategories of $\Mod R$}\}, & \EE'_n=\{\text{$n$-wide $(\E,\oplus,\Pi)$-closed subcategories of $\Mod R$}\},\\
\SS'=\{\text{generalization-closed subsets of $\Spec R$}\}, & \SS'_n=\{\text{$n$-coherent generalization-closed subsets of $\Spec R$}\},\\
\LL'=\{\text{bilocalizing subcategories of $\D(\Mod R)$}\}, & \LL'_n=\{\text{$n$-uniform bilocalizing subcategories of $\D(\Mod R)$}\}.
\end{array}
$$
Then there is a commutative diagram
$$
\xymatrix@C3.5pc{
\EE \ar@{=}[r] \ar[d]_{\supp}^\cong & \ \EE_\infty\ar[d]_{\supp}^\cong & \ \cdots\ar@{_{(}->}[l] & \ \EE_n\ar@{_{(}->}[l]\ar[d]_{\supp}^\cong & \ \cdots \ar@{_{(}->}[l] & \ \EE_2\ar[d]_{\supp}^\cong\ar@{_{(}->}[l] & \ \EE_1\ar[d]_{\supp}^\cong\ar@{_{(}->}[l] & \ \EE_0\ar[d]_{\supp}^\cong\ar@{_{(}->}[l]\\
\SS\ar@{=}[r] & \ \SS_\infty & \ \cdots\ar@{_{(}->}[l] & \ \SS_n\ar@{_{(}->}[l] & \ \cdots\ar@{_{(}->}[l] & \ \SS_2\ar@{_{(}->}[l] & \ \SS_1\ar@{_{(}->}[l] & \ \SS_0\ar@{_{(}->}[l]\\
\LL\ar@{=}[r]\ar[u]^\supp_\cong & \ \LL_\infty\ar[u]^\supp_\cong & \ \cdots\ar@{_{(}->}[l] & \ \LL_n\ar[u]^\supp_\cong\ar@{_{(}->}[l] & \ \cdots\ar@{_{(}->}[l] & \ \LL_2\ar[u]^\supp_\cong\ar@{_{(}->}[l] & \ \LL_1\ar[u]^\supp_\cong\ar@{_{(}->}[l] & \ \LL_0\ar[u]^\supp_\cong\ar@{_{(}->}[l]
}
$$
of sets, where the horizontal arrows are inclusion maps and the vertical arrows given by $\X\mapsto\supp\X=\bigcup_{X\in\X}\supp X$ are bijections.
This diagram restricts to the one where all the $\EE_*,\SS_*,\LL_*$ are replaced with $\EE_*',\SS_*',\LL_*'$ respectively.
One has
$$
\begin{array}{ll}
\EE_1=\{\text{$\oplus$-closed wide subcategories of $\Mod R$}\}, & \EE_0=\{\text{localizing subcategories of $\Mod R$}\},\\
\SS_1=\{\text{coherent subsets of $\Spec R$}\}, & \SS_0=\{\text{specialization-closed subsets of $\Spec R$}\},\\
\LL_1=\{\text{cohomology-closed localizing subcategories of $\D(\Mod R)$}\}, & \LL_0=\{\text{smashing subcategories of $\D(\Mod R)$}\},\\
\EE'_1=\{\text{bireflective Giraud subcategories of $\Mod R$}\}, & \EE'_0=\{\text{bilocalizing subcategories of $\Mod R$}\},\\
\SS'_1=\{\text{generalization-closed coherent subsets of $\Spec R$}\}, & \SS'_0=\{\text{clopen subsets of $\Spec R$}\},\\
\LL'_1=\{\text{cohomology-closed bilocalizing subcategories of $\D(\Mod R)$}\}, & \LL'_0=\{\text{bismashing subcategories of $\D(\Mod R)$}\}.
\end{array}
$$
\end{thm}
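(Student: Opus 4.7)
The plan is to assemble the theorem from the component results cited in its statement, using the (small) support $\supp$ as the common invariant that classifies all three types of subcategories simultaneously. The $n$-indexed conditions on $\EE_n$, $\SS_n$, $\LL_n$ are designed to be exact translations of one another under $\supp$; once this translation is established, the commutativity of the diagram and the bijectivity of the vertical arrows follow almost formally.

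I would begin with the $n=\infty$ column. The equalities $\EE=\EE_\infty$, $\SS=\SS_\infty$, $\LL=\LL_\infty$ hold by the extension of the definitions of $n$-wide, $n$-coherent, and $n$-uniform to $n=\infty$. The vertical bijection $\supp\colon\EE\to\SS$ is Takahashi's classification of $(\ee,\oplus,\ominus)$-closed subcategories of $\Mod R$ (extending Krause and Gabriel), while $\supp\colon\LL\to\SS$ is Neeman's classification of localizing subcategories of $\D(\Mod R)$. The $n=0$ and $n=1$ columns would then be identified with classical notions by unpacking definitions: at $n=0$ one recovers localizing/specialization-closed/smashing; at $n=1$ one recovers $\oplus$-closed wide subcategories (Krause), coherent subsets (Krause), and cohomology-closed localizing subcategories. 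Commutativity at these special columns follows directly from the known classifications.

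For the intermediate columns $n\in\NN$, the task is to show that if $\X\in\EE$ and $S\in\SS$ correspond under the already established support bijection, then $\X$ is $n$-wide if and only if $S$ is $n$-coherent, and similarly a $\Y\in\LL$ is $n$-uniform if and only if $\supp\Y$ is $n$-coherent. This is the content of Theorems \ref{7}, \ref{8-2}, \ref{31}, which supply the homological bookkeeping linking the closure conditions on subcategories to the corresponding combinatorial conditions on subsets of $\Spec R$. Granted these translations, $\supp$ restricts to a bijection on every column, and the horizontal inclusions $\EE_{n+1}\hookrightarrow\EE_n$, $\SS_{n+1}\hookrightarrow\SS_n$, $\LL_{n+1}\hookrightarrow\LL_n$ are immediate because each definition at level $n+1$ imposes strictly more closure than at level $n$.

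The primed version is obtained in parallel by imposing $\Pi$-closedness on the module and derived sides, which corresponds under $\supp$ to generalization-closedness on $\Spec R$; this reduces to the behavior of support under products and of localization at primes. The intersection of specialization- and generalization-closedness gives clopen subsets, matching bismashing subcategories of $\D(\Mod R)$ and bilocalizing subcategories of $\Mod R$ at $n=0$. The main obstacle, occupying the bulk of the technical work, is the identification in the intermediate columns: translating the homologically-defined $n$-wide and $n$-uniform conditions into the single combinatorial condition of $n$-coherence on $\Spec R$. This requires careful control of Ext and Tor vanishing across varying primes, and is the substance of the cited later theorems; the present theorem is then the bookkeeping diagram that packages these translations into a single commutative picture.
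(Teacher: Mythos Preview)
Your overall plan---assemble the theorem from the cited component results, with $\supp$ as the common classifying invariant---is exactly what the paper does; Theorem~\ref{10a} is a packaging statement and has no separate proof beyond pointing to Theorems~\ref{7}, \ref{8-2}, \ref{31} and their corollaries. So the strategy is correct.

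However, there are two concrete errors in your outline.

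First, the direction of the horizontal inclusions is reversed. You write $\EE_{n+1}\hookrightarrow\EE_n$ and justify this by saying ``each definition at level $n+1$ imposes strictly more closure than at level $n$.'' The opposite is true: being closed under $n$-kernels requires that \emph{any} exact sequence $0\to M\to X^0\to\cdots$ with only $n+1$ terms $X^0,\dots,X^n$ in $\X$ forces $M\in\X$, whereas closure under $(n+1)$-kernels demands one more term in $\X$ before drawing the same conclusion. So smaller $n$ is the stronger condition, and the inclusions go $\EE_n\hookrightarrow\EE_{n+1}$ (this is Remark~\ref{rem2}(1) and Remark~\ref{4}(2)). The diagram in the statement reflects this: $\EE_0$ sits inside $\EE_1$, not the other way around.

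Second, the equalities $\EE=\EE_\infty$, $\SS=\SS_\infty$, $\LL=\LL_\infty$ are not ``by extension of the definitions.'' The statement $\SS=\SS_\infty$ says every subset of $\Spec R$ is $\infty$-coherent, which is a genuine theorem (Corollary~\ref{infcoh}) proved by reducing to the local case and invoking the finite-dimensional result (Corollary~\ref{10}). The equality $\EE=\EE_\infty$ is Corollary~\ref{35}, and $\LL=\LL_\infty$ follows from Theorem~\ref{8-2} together with Corollary~\ref{infcoh}. These are not formal.

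Finally, your description of the technical core as ``control of Ext and Tor vanishing across varying primes'' is off-target: the actual arguments in Theorems~\ref{7} and \ref{8-2} work directly with minimal injective resolutions and the description $\supp X=\bigcup_i\Ass\E^i(X)$, not with Ext/Tor computations.
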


The bijections $\EE_0\to\SS_0$, $\EE\to\SS$, $\SS_0\gets\LL_0$, $\SS\gets\LL$, $\EE_1\to\SS_1\gets\LL_1$, and $\EE'_1\to\SS'_1\gets\LL'_1$ in Theorem \ref{10a} are respectively the same as the ones given by Gabriel \cite{Gab}, Takahashi \cite{hcls}, Neeman \cite{Nee}, Neeman \cite{Nee}, Krause \cite{Kra08}, and Angeleri H\"{u}gel, Marks, {\v S}{\v{t}}ov{\'{\i}}{\v{c}}ek, Takahashi and Vit\'{o}ria \cite{epi}.
By restricting the diagram in the theorem in other ways, we also give classifications of certain thick subcategories of $\Db(\Mod R)$, $\Dp(\Mod R)$ and $\D(\Mod R)_{\fid}$, the last one of which denotes the full subcategory of $\D(\Mod R)$ consisting of complexes of finite injective dimension.

The organization of this paper is as follows.
Section 2 is devoted to recalling basic facts on minimal injective resolutions and supports of modules/complexes.
In Section 3, we introduce the notions of $n$-wide subcategories of $\Mod R$, $n$-coherent subsets of $\Spec R$ for $n \in \NN \cup \{\infty\}$ and investigate fundamental properties of them.
In Section 4, we give a classification of $n$-wide subcategories of $\Mod R$ via $n$-coherent subsets of $\Spec R$, which contains results in \cite{Gab, Kra08, hcls}.
In Section 5, we introduce the notions of $n$-uniform and $n$-consistent subcategories of $\D(\Mod R)$ and classify some of them, which contains results in \cite{Kra08, Nee}. 
In Section 6, we consider two kinds of restrictions of classifications obtained in the previous two sections.
The first one is the restriction to classifications of $\Pi$-closed subcategories and hence the diagram in Theorem \ref{10a} is completed here.
The second one concerns classifications of $n$-uniform subcategories of $\D(\Mod R)_{\fid}$, $\Db(\Mod R)$ and $\Dp(\Mod R)$.
As an application we obtain a higher-dimensional analogue of Br\"{u}ning's classification theorem \cite{Br}.
In Section 7, we apply the classification of $n$-wide subcategories to a problem presented by Hartshorne \cite{H}.
We consider a weakened version of the notion of cofinite modules and study wideness of the subcategory of those modules.

\section{Preliminaries}

In this section, we recall several definitions which play important roles throughout this paper.

\begin{conv}\label{2.1}
Throughout this paper, we use the following convention.
We assume that all rings are commutative and noetherian and all subcategories are full.
We set $\NN:=\Z_{\ge0}=\{0,1,2,\dots\}$.
Let $R$ be a ring.
We denote by $\Mod R$ the category of (all) $R$-modules and by $\D(\Mod R)$ the (unbounded) derived category of the abelian category $\Mod R$.
We denote by $\e_R(M)$ the injective hull of an $R$-module $M$ and set $\kappa(\p)=R_\p/\p R_\p$ for each $\p\in\Spec R$.
We set $n+\infty=\infty$ and $n-\infty=-\infty$ for all $n\in\Z$.
We may simply say that a subcategory is $\oplus$-closed (resp. $\ominus$-closed, $\Pi$-closed) to mean that it is closed under (existing) direct sums (resp. direct summands, (existing) direct products)\footnote{In this paper, we say that a subcategory $\X$ of an additive category $\C$ is closed under direct sums (resp. direct products) provided that if $\{X_\lambda\}_{\lambda\in\Lambda}$ is a family of objects in $\X$ such that the direct sum $Y=\bigoplus_{\lambda\in\Lambda}X_\lambda$ (resp. the direct product $Y=\Pi_{\lambda\in\Lambda}X_\lambda$) exists in $\C$, then $Y$ belongs to $\X$.}.
We may omit subscripts and superscripts as long as there is no danger of confusion.
We may tacitly use statements given in Remarks.
\end{conv}

\begin{dfn}\label{2.2}
\begin{enumerate}[\rm(1)]		
\item
Let $\X$ be a subcategory of $\Mod R$.
We say that $\X$ is {\it Serre} if it is closed under extensions, submodules and quotient modules.
We say that $\X$ is {\it localizing} if it is Serre and closed under direct sums, and that $\X$ is {\it bilocalizing} if it is localizing and closed under direct products.
We say that $\X$ is {\it wide} if it is closed under extensions, kernels and cokernels.
\item
Let $\X$ be a subcategory of $\D(\Mod R)$.
We say that $\X$ is {\it thick} if it is closed under extensions, shifts and direct summands.
We say that $\X$ is {\it localizing} if it is thick and closed under direct sums, and that $\X$ is {\it bilocalizing} if it is localizing and closed under direct products.
\end{enumerate}
\end{dfn}

\begin{rem}\label{26}
We define a {\it strictly localizing subcategory} of an abelian category $\A$ (resp. a triangulated category $\mathcal{T}$) as a Serre (resp. thick) subcategory $\X$ such that the quotient functor $\A \to \A/\X$ (resp. $\mathcal{T} \to \mathcal{T}/\X$) has a right adjoint, see \cite{Kra00}. 
In the situation where $\A = \Mod R$ (resp. $\mathcal{T} = \D(\Mod R)$) with $R$ being a commutative noetherian ring, the localizing subcategories and the strictly localizing subcategories are the same.
Indeed, since $\A = \Mod R$ is a Grothendieck category, this follows by \cite[Corollaire 1, 375 page]{Gab}.
For the category $\mathcal{T} = \D(\Mod R)$, it is shown in \cite[Theorem 2.8]{Nee} that every localizing subcategory is generated by a set of objects and hence every localizing subcategory is strictly localizing by \cite[Lemma 2.1]{BIK0}.
\end{rem}

\begin{dfn}\label{2.4}
\begin{enumerate}[(1)]
\item
For $X \in \D(\Mod R)$ we define the {\em (small) support} of $X$ by
$$
\supp_RX= \{\p \in \Spec R \mid X \ltensor_R \kappa(\p) \not\cong 0\}.
$$
\item
Let $X$ be a complex of $R$-modules with $\H^iX=0$ for $i\ll0$.
Then one can take a {\em minimal injective resolution}
$$
\E_R(X)=(0\to\cdots\xrightarrow{\partial^{i-1}}\E_R^i(X)\xrightarrow{\partial^i}\E_R^{i+1}(X)\xrightarrow{\partial^{i+1}}\cdots)
$$
of $X$, that is, a bounded below complex of injective $R$-modules quasi-isomorphic to $X$ such that $\E_R^i(X)$ is the injective hull of the kernel of the map $\partial^i$ for all $i$.
\item
Let $M$ be an $R$-module.
For an integer $i>0$, we denote by $\mho^iM$ the $i$th {\em cosyzygy} of $M$, that is, the image of the $(i-1)$st differential map in the minimal injective resolution of $M$.
We set $\mho^0M=M$ and $\mho M=\mho^1M$.
\item
We say that a subcategory $\X$ of $\Mod R$ is {\em $\E$-closed} if $\E^i(M)\in\X$ for all $M\in\X$ and $i \ge 0$.
This is equivalent to saying that each object $M\in\X$ admits an injective resolution $I$ with $I^i\in\X$ for all $i\ge 0$.
If in addition $\X$ satisfies $M \in \X$ whenever $\E^i(M) \in \X$ for all $i \ge 0$, we say that $\X$ is {\em $\ee$-closed}. 
\if0
\old{We say that a subcategory $\X$ of $\Mod R$ is {\em $\ee$-closed} if for an $R$-module $M$ one has $M\in\X$ if and only if $\E^i(M) \in \X$ for all $i \ge 0$.
We say that $\X$ is {\em $\E$-closed} if $\E^i(M)\in\X$ for all $M\in\X$ and $i \ge 0$.
This is equivalent to saying that each object $M\in\X$ admits an injective resolution $I$ with $I^i\in\X$ for all $i\ge 0$.}\fi
In a similar fashion, {\em $\ee$-closed} and {\em $\E$-closed} subcategories of $\Dp(\Mod R)$ are defined by using minimal injective resolutions of complexes.
\end{enumerate}
\end{dfn}

\begin{nota}
Let $\X$ be a subcategory of $\Mod R$ and $\Phi$ a subset of $\Spec R$.
For $\Delta\in\{\supp,\Ass\}$ we denote by $\Delta^{-1}_\X(\Phi)$ the subcategory of $\X$ consisting of objects $X$ with $\Delta(X)\subseteq\Phi$.
\end{nota}

\begin{rem}\cite[Proposition 2.3 and Corollary 2.5]{cd}\cite[Theorem 18.7]{M}\label{rem}
\begin{enumerate}[(1)]
\item
Let $X\in\D(\Mod R)$.
Let $I$ be a complex of injective $R$-modules quasi-isomorphic to $X$.
Then $\supp X\subseteq\bigcup_{i\in\Z}\Ass I^i$.
The equality holds if $\H^{\ll0}(X)=0$ and $I=\E(X)$.
In particular, for each $R$-module $M$ there is an inclusion $\Ass M\subseteq\supp M$, whose equality holds if $M$ is injective.
\item
Let $\Phi$ be a subset of $\Spec R$.
Then $\supp^{-1}_{\Mod R}(\Phi)$ is closed under direct sums, direct summands and extensions.
One also has $\supp^{-1}_{\Mod R}(\Phi) \subseteq\Ass^{-1}_{\Mod R}(\Phi)$.
\item
Let $M\in\Mod R$, $\p\in\Spec R$ and $i\in\Z$.
The number $\mu_i(\p, M) = \dim_{\kappa(\p)}\Ext_{R_\p}^i(\kappa(\p), M_\p)$ is equal to the cardinality of the number of direct summands $\e_R(R/\p)$ of $\E^i(M)$.
Thus, there is a direct sum decomposition into indecomposable injective modules $\E^i(M) \cong \bigoplus_{\p \in \Spec R} \e_R(R/\p)^{\oplus \mu_i(\p, M)}$.
\end{enumerate}
\end{rem}

\section{Basic properties of $n$-wide subcategories and $n$-coherent subsets}

In this section, we introduce the notion of $n$-coherent subsets of $\Spec R$ with $n \in \NN\cup \{\infty\}$, which form a filtration of classes of subsets of $\Spec R$.
The class of specialization-closed subsets and that of coherent subsets introduced in \cite{Kra08} appear among this filtration.
As a categorical counterpart of this filtration, we introduce the notion of $n$-wide subcategories of $\Mod R$ with $n\in\NN\cup \{\infty\}$, which has been introduced in \cite{cd} for $n \in \NN$.
Likewise, two important classes of subcategories, that is, those of Serre subcategories and wide subcategories, appear in this filtration.

First of all, we recall here the definition of an $n$-wide subcategory and extend it to $n=\infty$.

\begin{dfn}\label{3.1}
Let $n$ be a nonnegative integer (resp. $n=\infty$).
A subcategory $\X$ of $\Mod R$ is said to be closed under {\it $n$-kernels}\footnote{Our definitions of $n$-kernels and $n$-cokernels are different from those in \cite[Definition 2.2]{J}.} if for every exact sequence $0 \to M \to X^0 \to X^1 \to X^2 \to \cdots$ in $\Mod R$ with $X^i \in \X$ for all $0 \le i\le n$ (resp. all $i\ge0$) the module $M$ is in $\X$.
Dually, $\X$ is said to be closed under {\it $n$-cokernels} if for every exact sequence $\cdots \to X_2 \to X_1 \to X_0 \to M \to 0$ in $\Mod R$ with $X_i \in \X$ for all $0 \le i\le n$ (resp. all $i\ge0$) the module $M$ is in $\X$.
We say that a subcategory $\X$ of $\Mod R$ is {\it $n$-wide} if it is closed under extensions, $n$-kernels and $n$-cokernels.
\end{dfn}

\begin{rem}\label{rem2}
Let $\X$ be a subcategory of $\Mod R$.
\begin{enumerate}[(1)]
\item
Let $n\in\NN\cup\{\infty\}$.
If $\X$ is closed under $n$-kernels (resp. $n$-cokernels), then $\X$ is closed under $(n+1)$-kernels (resp. $(n+1)$-cokernels) and $\infty$-kernels (resp. $\infty$-cokernels).
In particular, any $n$-wide subcategory of $\Mod R$ is both $(n+1)$-wide and $\infty$-wide.
In general, there is an $(n+1)$-wide subcategory which is not $n$-wide, see the example below.
\item
$\X$ is closed under $0$-kernels (resp. $0$-cokernels) if and only if $\X$ is closed under submodules (resp. quotient modules).
In particular, $\X$ is $0$-wide and $\oplus$-closed if and only if it is localizing.
\item
$\X$ is closed under $1$-kernels (resp. $1$-cokernels) if and only if $\X$ is closed under kernels (resp. cokernels).
In particular, $\X$ is $1$-wide if and only if it is wide.
\item
Let $n\in\NN\cup\{\infty\}$.
If $\X$ is closed under $n$-kernels (resp. $n$-cokernels) e.g., if $\X$ is $n$-wide, then $\X$ is closed under direct summands and kernels of epimorphisms (resp.  cokernels of monomorphisms).
Indeed, for any $R$-modules $M,N$ the sequence $\cdots\xrightarrow{\left(\begin{smallmatrix}0&0\\0&1\end{smallmatrix}\right)}M\oplus N\xrightarrow{\left(\begin{smallmatrix}1&0\\0&0\end{smallmatrix}\right)}M\oplus N\xrightarrow{\left(\begin{smallmatrix}0&0\\0&1\end{smallmatrix}\right)}\cdots$ is exact, whose cocycles are $M,N$.
This shows that $\X$ is closed under direct summands.
For a short exact sequence $0 \to L \to M \to N \to 0$, one has the long exact sequence $0 \to L \to M \to N \to 0 \to \cdots \to 0 \to 0$.
Hence $\X$ is closed under kernels of epimorphisms provided $\X$ is closed under $n$-kernels and similarly for $\X$ being closed under $n$-cokernels.
\item
If $\X$ is closed under $\infty$-kernels and $\E$-closed, then $\X$ is $\ee$-closed.
In particular, every $n$-wide $\E$-closed subcategory of $\Mod R$ with $n\in\NN\cup\{\infty\}$ is $\ee$-closed.
\item
In \cite{NSZ}, they use the notation $\mathrm{Pres}^n(\X)$ for the subcategory of $R$-modules $M$ admitting a {\it presentation of length $n$} by objects of $\X$ i.e., an exact sequence $X_n \to X_{n-1} \to  \ldots \to X_0 \to M \to 0$ with $X_i \in \X$ for all $i = 0, 1, \ldots, n$. 
Using this notation, we have that $\X$ is closed under $n$-cokernels if and only if $\mathrm{Pres}^n(\X) \subseteq \X$.
Dually, using the subcategory $\mathrm{Copres}^n(\X)$ consisting of $R$-modules admitting {\it copresentation of length $n$} by objects of $\X$, we have that $\X$ is closed under $n$-kernels if and only if $\mathrm{Copres}^n(\X) \subseteq \X$.
\end{enumerate}
\end{rem}

\begin{ex}
\begin{enumerate}[\rm(1)]
\item
Fix an $R$-module $C$.
Then we define the $C$-{\it grade} of an $R$-module $M$ by
$
\mbox{$C$-$\grade_R(M)$} := \inf \{i \ge 0 \mid \Ext_R^i(C, M) \neq 0 \}.
$	
One can easily check that the subcategory of $\Mod R$ consisting of $R$-modules with $\mbox{$C$-$\grade_R(M)$} \le n$ is closed under $n$-kernels (cf. \cite[Proposition 1.2.9]{BH}).
Dually, the $C$-{\it cograde} of $M$ is defined by
$
\mbox{$C$-$\mathrm{cograde}_R(M)$} := \inf \{i \ge 0 \mid \Tor^R_i(C, M) \neq 0 \},
$	
see \cite[Corollary 3.11]{O}.
Then the subcategory of $\Mod R$ consisting of $R$-modules with $\mbox{$C$-$\mathrm{cograde}_R(M)$} \le n$ is closed under $n$-cokernels.

\item (\cite[Theorem 4.3]{cd})
Let $\a$ be an ideal of $R$ with cohomological dimension at most $n$, i.e., $\H^{>n}_\a (M) = 0$ for all $M$.
Then $\supp^{-1}_{\Mod R}(\mathrm{D}(\a))$ is $n$-wide.
For example, if $\a$ is generated by $n$-elements, then $\supp^{-1}_{\Mod R}(\mathrm{D}(\a))$ is $n$-wide.
Moreover, this subcategory is not $(n-1)$-wide provided $\a$ is generated by a regular sequence of length $n$. 
\end{enumerate}
\end{ex}

For a subset $\Phi$ of $\Spec R$ and a prime ideal $\p$ of $R$, we set
$$
\Phi_\p := \{P\in\Spec R_\p\mid P\cap R\in\Phi\}.
$$
Here, $P \cap R$ means the pullback of $P$ along the canonical ring homomorphism $R \to R_\p$.
Then $\Phi_\p$ is homeomorphic to the generalization-closure $\{\q \in \Phi \mid \q \subseteq \p\}$ of $\{\p\}$ in $\Phi$ by the assignment $P \mapsto P \cap R$.
The following fact is frequently used in this paper.

\begin{prop}\label{222}
Let $n$ be either a nonnegative integer or $\infty$.
Let $\Phi$ be a subset of $\Spec R$.
Then $\supp^{-1}_{\Mod R}(\Phi)$ is $n$-wide if and only if $\supp_{\Mod R_\p}^{-1}(\Phi_\p)$ is $n$-wide for each $\p \in \Spec R$.
\end{prop}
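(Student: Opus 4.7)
The plan is to set $\X := \supp^{-1}_{\Mod R}(\Phi)$ and $\X_\p := \supp^{-1}_{\Mod R_\p}(\Phi_\p)$, note that both are automatically closed under extensions by Remark \ref{rem}(2), and reduce both directions to a local-to-global principle for support. The key input I would establish first is the base change identity $\supp_{R_\p}(M_\p) = (\supp_R M)_\p$ for every $R$-module $M$, which follows from $M_\p \ltensor_{R_\p} \kappa(P) \cong M \ltensor_R \kappa(P\cap R)$ together with $\kappa(P) = \kappa(P\cap R)$ for $P \in \Spec R_\p$. This immediately yields the local-to-global statement: $M \in \X$ if and only if $M_\p \in \X_\p$ for every $\p \in \Spec R$. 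The forward implication is immediate; conversely, any $\q \in \supp_R M$ satisfies $\q R_\q \in (\supp_R M)_\q = \supp_{R_\q}(M_\q) \subseteq \Phi_\q$, which forces $\q = (\q R_\q) \cap R \in \Phi$.

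Given this, the direction $(\Leftarrow)$ is the easy one: for an exact sequence $0 \to N \to X^0 \to X^1 \to \cdots$ in $\Mod R$ witnessing the $n$-kernel condition for $\X$, I would localize at an arbitrary $\p \in \Spec R$. Exactness is preserved, each $X^i_\p$ lies in $\X_\p$ by the local-to-global fact, and $n$-wideness of $\X_\p$ forces $N_\p \in \X_\p$. Since this holds for every $\p$, the local-to-global fact places $N$ in $\X$. The $n$-cokernel case is dual, and the case $n = \infty$ goes through verbatim since all $X^i$ being in $\X$ yields all $X^i_\p$ in $\X_\p$.

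For $(\Rightarrow)$, given an exact sequence $0 \to N \to X^0 \to X^1 \to \cdots$ in $\Mod R_\p$ with $X^i \in \X_\p$ for $0 \le i \le n$, I would restrict scalars along $R \to R_\p$; the sequence remains exact in $\Mod R$. The crucial observation is that for any $R_\p$-module $Y$, viewed as an $R$-module, $Y \in \X$ holds if and only if $Y \in \X_\p$. Indeed, $\supp_R Y \subseteq \{\q : \q \subseteq \p\}$ since any $s \in \q \setminus \p$ acts invertibly on $Y$ and as zero on $\kappa(\q)$; moreover the bijection $\q \leftrightarrow \q R_\p$ identifies $\supp_R Y$ with $\supp_{R_\p} Y$ and identifies $\{\q \in \Phi : \q \subseteq \p\}$ with $\Phi_\p$, so the two containment conditions coincide. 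Therefore each $X^i$ lies in $\X$ as an $R$-module, $n$-wideness of $\X$ gives $N \in \X$, and since $N$ is already an $R_\p$-module, $N \in \X_\p$. The $n$-cokernel case is dual and $n = \infty$ works unchanged.

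The main obstacle I anticipate is the bookkeeping in the forward direction, where one must carefully compare $\supp_R$ and $\supp_{R_\p}$ for an $R_\p$-module regarded as an $R$-module, and translate the condition of membership in $\Phi_\p$ back to membership in $\Phi$. Once the base change identity for support is available, however, every required transfer between $\X$ and $\X_\p$ reduces to routine manipulation with primes.
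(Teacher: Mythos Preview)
Your proposal is correct and follows essentially the same route as the paper: both arguments rest on the base change identity $\supp_{R_\p}(M_\p)=(\supp_R M)_\p$, derive from it the local-to-global criterion $M\in\X\Leftrightarrow M_\p\in\X_\p$ for all $\p$, then handle $(\Leftarrow)$ by localizing an exact sequence and $(\Rightarrow)$ by restricting scalars along $R\to R_\p$. Your write-up is simply more explicit about why an $R_\p$-module $Y$ satisfies $Y\in\X\Leftrightarrow Y\in\X_\p$, a step the paper compresses into the single sentence ``Then we see that $X^i\in\supp^{-1}_{\Mod R}(\Phi)$.''
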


\begin{proof}
Note that for a multiplicatively closed subset $S$ of $R$ and an $R$-module $M$ one has $\supp_{R_S} M_S = \{P \in \Spec R_S \mid P \cap R \in \supp_R M\}$.
Hence $M \in \supp_{\Mod R}^{-1}(\Phi)$ if and only if $M_\p \in\supp_{\Mod R_\p}^{-1}(\Phi_\p)$ for all $\p \in \Spec R$.
The ``if'' part of the proposition is deduced from this.
We show the ``only if'' part from now on.
Let $n\in\NN$ (resp. $n=\infty$), fix a prime ideal $\p$ of $R$, and take an exact sequence $0 \to M \to X^0 \to X^1\to\cdots$ in $\Mod R_\p$ with $X^i \in \supp^{-1}_{\Mod R_\p}(\Phi_\p)$ for all $0\le i\le n$ (resp. for all $i\ge0$).
Then we see that $X^i\in\supp^{-1}_{\Mod R}(\Phi)$ for all $0\le i\le n$ (resp. for all $i\ge0$).
The $n$-wideness of $\supp^{-1}_{\Mod R}(\Phi)$ yields $M\in\supp^{-1}_{\Mod R}(\Phi)$, that is, $\supp_RM\subseteq\Phi$.
We then observe that $\supp_{R_\p}M\subseteq\Phi_\p$, namely, $M\in\supp^{-1}_{\Mod R_\p}(\Phi_\p)$.
This shows that $\supp^{-1}_{\Mod R_\p}(\Phi_\p)$ is closed under $n$-kernels.
Closure under $n$-cokernels is shown similarly.
\end{proof}

Next, we introduce the notion of $n$-coherent subsets of $\Spec R$.
Denote by $\Inj R$ the subcategory of $\Mod R$ consisting of injective $R$-modules.

\begin{dfn}\label{3.4}
Let $n$ be a nonnegative integer (resp. $n=\infty$).
A subset $\Phi$	of $\Spec R$ is called {\it $n$-coherent} if for an exact sequence $\cdots \to I_2 \to I_1 \to I_0 \to C \to 0$ in $\Mod R$ with $I_i\in\Inj R$ and $\Ass I_i \subseteq \Phi$ for all $0\le i\le n$ (resp. all $i\ge0$) the module $C$ is embedded in some $J \in\Inj R$ with $\Ass J \subseteq \Phi$.
\end{dfn}

\begin{rem}\label{4}
\begin{enumerate}[(1)]
\item
Let $n\in\NN$ (resp. $n=\infty$).
A subset $\Phi$ of $\Spec R$ is $n$-coherent if and only if for an exact sequence $\cdots \to I_1 \to I_0 \to C \to 0$ in $\Mod R$ with $I_i\in\Ass_{\Inj R}^{-1}(\Phi)$ for all $0\le i\le n$ (resp. all $i\ge0$) one has $\Ass_RC\subseteq\Phi$.
\item
Let $n\in\NN\cup\{\infty\}$.
Then $n$-coherence implies $(n+1)$-coherence and $\infty$-coherence.
In general, there is an $(n+1)$-coherent subset which is not $n$-coherent. 
\item
The $1$-coherent subsets are the same as the coherent subsets in the sense of \cite{Kra08}.
\item
Let $n\in\NN\cup\{\infty\}$.
If $\supp_{\Mod R}^{-1}(\Phi)$ is closed under $n$-cokernels, then $\Phi$ is $n$-coherent.
\end{enumerate}
\end{rem}

\begin{ex}(\cite[Example 4.11]{cd})
Let $\underline{x} := x_1, \ldots, x_n$ be a sequence of elements of $R$.
Then $\mathrm{D}(\underline{x}) := \Spec R \setminus \mathrm{V}(\underline{x})$ is $n$-coherent which is not $(n-1)$-coherent if $\underline{x}$ is an $R$-regular sequence.	
\end{ex}

The latter assertion of the following proposition includes \cite[Proposition 4.1(2)]{Kra08}.

\begin{prop}\label{3}
The $0$-coherent subsets are the same as the specialization-closed subsets.
In particular, specialization-closed subsets are coherent.
\end{prop}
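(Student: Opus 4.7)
The strategy is to establish the two implications of the equivalence separately; the concluding ``in particular'' then follows at once, because $0$-coherence implies $1$-coherence by Remark~\ref{4}(2), and $1$-coherence agrees with coherence by Remark~\ref{4}(3).

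For the direction specialization-closed $\Rightarrow$ $0$-coherent, I would begin with a surjection $I_0\twoheadrightarrow C$ with $I_0\in\Inj R$ and $\Ass I_0\subseteq\Phi$ and, by Remark~\ref{4}(1), reduce the task to showing $\Ass C\subseteq\Phi$. Fix $\q\in\Ass C$, choose $x\in C$ with $\ann x=\q$, and lift to $y\in I_0$. Since $Ry\cong R/\ann y$ with $\ann y\subseteq\q$, one has
$$\q\in\V(\ann y)=\Supp Ry\subseteq\Supp I_0=\bigcup_{\p\in\Ass I_0}\V(\p),$$
where the last equality combines the injective decomposition of Remark~\ref{rem}(3) with the standard identification $\Supp\e_R(R/\p)=\V(\p)$. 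Thus $\q$ specializes some $\p\in\Ass I_0\subseteq\Phi$, and specialization-closedness delivers $\q\in\Phi$.

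For the reverse direction, I would cook up an explicit witnessing cokernel. Given $\p\in\Phi$ and $\q\supseteq\p$, set $I_0:=\e_R(R/\p)$, so that $\Ass I_0=\{\p\}\subseteq\Phi$. Using the chain $\q/\p\subseteq R/\p\subseteq\e_R(R/\p)$, form the quotient $C:=\e_R(R/\p)/(\q/\p)$. A one-line annihilator computation then shows that the class $\overline 1\in C$ of $1\in R/\p$ satisfies $\ann_R\overline 1=\q$, so $\q\in\Ass C$. Remark~\ref{4}(1) together with the assumed $0$-coherence forces $\Ass C\subseteq\Phi$, whence $\q\in\Phi$.

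The step I expect to need the most thought is the construction in the reverse direction, namely exhibiting an honest quotient of an injective whose associated primes recover the given specialization $\q\supseteq\p$. The choice $C=\e_R(R/\p)/(\q/\p)$ is essentially forced by the requirements $\Ass I_0\subseteq\Phi$ and $\q\in\Ass C$, and the annihilator calculation $\ann_R\overline 1=\q$ is immediate from $\q/\p\subseteq R/\p$, so no further technical complication should arise.
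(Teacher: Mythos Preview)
Your proof is correct, but it proceeds quite differently from the paper's argument.

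For specialization-closed $\Rightarrow$ $0$-coherent, the paper simply invokes \cite[Theorem 1.1]{cd}, which asserts that $\Ass_{\Mod R}^{-1}(\Phi)$ is closed under quotient modules whenever $\Phi$ is specialization-closed; your argument instead unpacks this by an explicit computation with supports, using $\Supp I_0=\bigcup_{\p\in\Ass I_0}\V(\p)$ for injective $I_0$. For the converse, the paper argues indirectly: given $M\in\Ass_{\Mod R}^{-1}(\Phi)$, it applies $0$-coherence iteratively to the surjections $\E^{i}(M)\twoheadrightarrow\mho^{i+1}M$ to obtain $\E^i(M)\in\Ass_{\Inj R}^{-1}(\Phi)$ for all $i$, concluding $\Ass_{\Mod R}^{-1}(\Phi)=\supp_{\Mod R}^{-1}(\Phi)$ and then again citing \cite[Theorem 1.1]{cd}. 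Your approach is more direct and explicit: you manufacture the single quotient $\e_R(R/\p)/(\q/\p)$ whose associated prime set contains $\q$, bypassing the cosyzygy iteration entirely. Your route is self-contained and avoids the external reference, while the paper's route has the advantage of simultaneously establishing the structural identity $\Ass_{\Mod R}^{-1}(\Phi)=\supp_{\Mod R}^{-1}(\Phi)$, which is of independent interest elsewhere in the paper.
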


\begin{proof}
The second assertion follows from the first one and Remark \ref{4}(2)(3).
To show the first assertion, let $\Phi$ be a subset of $\Spec R$.
Suppose that $\Phi$ is specialization-closed.
Then by \cite[Theorem 1.1]{cd} the subcategory $\Ass_{\Mod R}^{-1}(\Phi)$ is closed under quotient modules.
Let $I\twoheadrightarrow C$ be a surjective homomorphism with $I\in\Ass_{\Inj R}^{-1}(\Phi)$.
Then $I$ is in $\Ass_{\Mod R}^{-1}(\Phi)$, and so is $C$.
Hence $\e(C)\in\Ass_{\Inj R}^{-1}(\Phi)$, which shows that $\Phi$ is $0$-coherent.
Conversely, assume that $\Phi$ is $0$-coherent.
Let $M\in\Ass_{\Mod R}^{-1}(\Phi)$.
Then $\E^0(M)=\e(M)\in\Ass_{\Inj R}^{-1}(\Phi)$ and the surjection $\E^0(M)\twoheadrightarrow \mho M$ shows that $\mho M$ is embedded in a module in $\Ass_{\Inj R}^{-1}(\Phi)$, which implies $\E^1(M)=\e_R(\mho M)\in\Ass_{\Inj R}^{-1}(\Phi)$.
Iterating this procedure, we see that $\E^i(M)\in\Ass_{\Inj R}^{-1}(\Phi)$ for all $i\ge0$, and $M\in\supp^{-1}_{\Mod R}(\Phi)$.
Thus $\Ass_{\Mod R}^{-1}(\Phi)=\supp^{-1}_{\Mod R}(\Phi)$, and $\Phi$ is specialization-closed by \cite[Theorem 1.1]{cd}.
\end{proof}

The following extends \cite[Proposition 4.1(1)(4)]{Kra08} on coherent subsets to $n$-coherent ones.

\begin{prop}\label{2}
Let $n$ be either a nonnegative integer or $\infty$.
\begin{enumerate}[\rm(1)]
\item
Let $\{\Phi_\lambda\}_{\lambda\in\Lambda}$ be a family of $n$-coherent subsets of $\Spec R$.
Then $\bigcap_{\lambda \in \Lambda} \Phi_\lambda$ is $n$-coherent.
\item
A subset $\Phi$ of $\Spec R$ is $n$-coherent if and only if $\Phi_\p$ is $n$-coherent for each $\p \in \Spec R$.
\end{enumerate}
\end{prop}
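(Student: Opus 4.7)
The plan is to reduce both parts to the equivalent formulation of $n$-coherence given in Remark \ref{4}(1): $\Phi$ is $n$-coherent exactly when every exact sequence $\cdots\to I_1\to I_0\to C\to 0$ with $I_i$ injective and $\Ass_R I_i\subseteq\Phi$ (for $0\le i\le n$, resp.\ for all $i\ge0$) forces $\Ass_R C\subseteq\Phi$. In this form both assertions become straightforward manipulations of associated primes.

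For (1), I would fix an exact sequence $\cdots\to I_1\to I_0\to C\to 0$ of injective $R$-modules with $\Ass_R I_i\subseteq\bigcap_{\lambda\in\Lambda}\Phi_\lambda$ throughout the required range. For each $\lambda$ one has in particular $\Ass_R I_i\subseteq\Phi_\lambda$, so $n$-coherence of $\Phi_\lambda$ yields $\Ass_R C\subseteq\Phi_\lambda$. Intersecting over $\lambda$ gives $\Ass_R C\subseteq\bigcap_{\lambda}\Phi_\lambda$, as required.

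For (2), I would invoke the standard facts that localization at a prime is exact and preserves injectivity over a noetherian ring, that every injective $R_\p$-module is also injective as an $R$-module, and that under the usual bijection $P\leftrightarrow P\cap R$ between $\Spec R_\p$ and $\{\q\in\Spec R:\q\subseteq\p\}$ one has $\Ass_{R_\p}M=\{\q R_\p:\q\in\Ass_R M\}$ for every $R_\p$-module $M$, and more generally $\Ass_{R_\q}(N_\q)=\{\p R_\q:\p\in\Ass_R N,\ \p\subseteq\q\}$ for every $R$-module $N$. Assuming $\Phi$ is $n$-coherent, any candidate exact sequence over $R_\p$ witnessing failure of $n$-coherence of $\Phi_\p$ is also an exact sequence over $R$ whose $I_i$ are $R$-injective with $\Ass_R I_i\subseteq\Phi$; hence $\Ass_R C\subseteq\Phi$, which translates back to $\Ass_{R_\p}C\subseteq\Phi_\p$. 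Conversely, assuming $\Phi_\p$ is $n$-coherent for every $\p$, and given an exact sequence $\cdots\to I_1\to I_0\to C\to 0$ over $R$ with $\Ass_R I_i\subseteq\Phi$ in range, for any $\q\in\Ass_R C$ I would localize the entire sequence at $\q$; the $(I_i)_\q$ are injective $R_\q$-modules with $\Ass_{R_\q}(I_i)_\q\subseteq\Phi_\q$, so the $n$-coherence of $\Phi_\q$ gives $\q R_\q\in\Ass_{R_\q}(C_\q)\subseteq\Phi_\q$, i.e.\ $\q\in\Phi$.

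No step presents a genuine obstacle. The only care needed is the bookkeeping above — tracking how the injective property, the $\Ass$-operation, and the set $\Phi_{(-)}$ each behave under restriction of scalars and under localization — together with the observation that the index-range restriction $0\le i\le n$ in the finite case is preserved by both operations.
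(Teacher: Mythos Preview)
Your proposal is correct and follows essentially the same route as the paper's proof: for (1) you apply the $n$-coherence of each $\Phi_\lambda$ separately and intersect, and for (2) you pass between $R$- and $R_\p$-exact sequences using exactly the compatibilities of injectivity and $\Ass$ under localization/restriction that the paper invokes. The only cosmetic differences are that you work throughout with the reformulation $\Ass_R C\subseteq\Phi$ from Remark \ref{4}(1) (the paper sometimes phrases things via injective hulls), and in the ``if'' part of (2) you localize at a chosen $\q\in\Ass_R C$ rather than at an arbitrary $\p$; neither affects the substance of the argument.
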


\begin{proof}
Let $n$ be a nonnegative integer (resp. $n=\infty$).

(1) Let $\cdots \to I_2 \to I_1 \to I_0 \to C \to 0$ be an exact sequence with $I_i\in\Ass_{\Inj R}^{-1}(\bigcap_{\lambda\in\Lambda}\Phi_\lambda)$ for all $0 \le i \le n$ (resp. all $i\ge0$).
Then by assumption, $C$ is embedded in some $I_\lambda\in\Ass_{\Inj R}^{-1}(\Phi_\lambda)$ for each $\lambda\in\Lambda$.
Hence $\e_R(C)\in\Ass_{\Inj R}^{-1}(\Phi_\lambda)$ for all $\lambda\in\Lambda$, that is, $\e_R(C)\in\Ass_{\Inj R}^{-1}(\bigcap_{\lambda\in\Lambda}\Phi_\lambda)$.

(2) First of all, recall the fundamental fact that for a multiplicatively closed subset $S$ of $R$ one has $(\Inj R)_S\subseteq\Inj(R_S)\subseteq\Inj R$ and $\Ass_{R_S}(M_S)=\{P\in\Spec R_S\mid P\cap R\in\Ass_RM\}$ for $M\in\Mod R$ (see \cite[Theorem 6.2]{M}).

The ``only if'' part:
Fix $\p\in\Spec R$.
Let $\cdots \to I_2 \to I_1 \to I_0 \to C \to 0$ be an exact sequence of $R_\p$-modules with $I_i\in\Ass_{\Inj R_\p}^{-1}(\Phi_\p)$ for all $0 \le i\le n$ (resp. all $i\ge0$).
Then $I_i\in\Ass_{\Inj R}^{-1}(\Phi)$ for all $0 \le i\le n$ (resp. all $i\ge0$), and the $n$-coherence of $\Phi$ implies that $C$ is embedded in some $J\in\Ass_{\Inj R}^{-1}(\Phi)$.
Hence $C=C_\p$ is embedded in $J_\p\in\Ass_{\Inj R_\p}^{-1}(\Phi_\p)$.

The ``if'' part:
Let $\cdots \to I_2 \to I_1 \to I_0 \to C \to 0$ be an exact sequence of $R$-modules with $I_i\in\Ass_{\Inj R}^{-1}(\Phi)$ for all $0 \le i \le n$ (resp. all $i\ge0$).
For each $\p\in\Spec R$ the exact sequence $\cdots \to (I_2)_\p \to (I_1)_\p \to (I_0)_\p \to C_\p \to 0$ of $R_\p$-modules is induced, and $(I_i)_\p\in\Ass_{\Inj R_\p}^{-1}(\Phi_\p)$ for all $0 \le i \le n$ (resp. all $i\ge0$).
Since $\Phi_\p$ is $n$-coherent, $C_\p$ is embedded in some $J(\p)\in\Ass_{\Inj R_\p}^{-1}(\Phi_\p)$.
Hence $\e_{R_\p}(C_\p)\in\Ass_{\Inj R_\p}^{-1}(\Phi_\p)$ for all $\p\in\Spec R$.
It is easy to see that $\e_R(C)\in\Ass_{\Inj R}^{-1}(\Phi)$.
\end{proof}

Let $\Phi$ be a specialization-closed subset of $\Spec R$, $i$ an integer and $M$ an $R$-module.
We denote by $\H_\Phi^i(M)$ the $i$th {\em local cohomology} module of $M$ with respect to $\Phi$.
The details of local cohomology with respect to a specialization-closed subset are found in \cite[\S3]{cd} for instance.
Here we relate $n$-coherence to local cohomology.

\begin{prop}\label{rigid}
Let $\Phi$ be a specialization-closed subset of $\Spec R$, and let $n\ge0$ be an integer.
Then $\Phi^\complement$ is $n$-coherent if and only if $\H_\Phi^{>n}(M) = 0$ for all $R$-modules $M$ with $\H_\Phi^{\le n}(M) = 0$.
\end{prop}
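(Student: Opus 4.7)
The plan is to translate both sides of the equivalence into the language of the minimal injective resolution $0\to M\to E^0\to E^1\to\cdots$ and its cosyzygies $\mho^j M$, using the observation that for an injective $R$-module $I$ one has $\Gamma_\Phi I=0$ if and only if $\Ass I\subseteq\Phi^\complement$: since $\Phi$ is specialization-closed, $\Gamma_\Phi$ applied to an indecomposable summand $\e_R(R/\p)$ is the whole module when $\p\in\Phi$ and zero otherwise. Combined with Remark~\ref{4}(1), $n$-coherence of $\Phi^\complement$ becomes the statement that every exact sequence $\cdots\to I_1\to I_0\to C\to 0$ with $I_i\in\Inj R$ and $\Gamma_\Phi I_i=0$ for $0\le i\le n$ satisfies $\Gamma_\Phi C=0$.

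For the ``if'' direction, take such a sequence and form the kernels $K_0=C$, $K_{j+1}=\ker(I_j\to K_j)$, giving short exact sequences $0\to K_{j+1}\to I_j\to K_j\to 0$. Since each $I_j$ with $0\le j\le n$ is simultaneously $\Gamma_\Phi$-killed and (by injectivity) $\H_\Phi^{\ge 1}$-acyclic, the associated long exact sequences degenerate into isomorphisms $\H_\Phi^i K_j\cong\H_\Phi^{i+1}K_{j+1}$ for every $i\ge 0$. Iterating in one direction yields $\Gamma_\Phi C\cong\H_\Phi^{n+1}K_{n+1}$, and iterating in the other yields $\H_\Phi^i K_{n+1}\cong\Gamma_\Phi K_{n+1-i}=0$ for $0\le i\le n$. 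Hence $K_{n+1}$ satisfies $\H_\Phi^{\le n}K_{n+1}=0$, so the hypothesis forces $\H_\Phi^{n+1}K_{n+1}=0$, whence $\Gamma_\Phi C=0$ and $\Ass C\subseteq\Phi^\complement$.

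For the ``only if'' direction, fix $M$ with $\H_\Phi^{\le n}M=0$ and prove $\H_\Phi^{n+k}M=0$ by induction on $k\ge 1$. The preparation is a sub-claim, proved by induction on $j$: if $\H_\Phi^{\le N}M=0$ then $\Ass E^j\subseteq\Phi^\complement$ for $0\le j\le N$, obtained by iterating the long exact sequence attached to $0\to\mho^j M\to E^j\to\mho^{j+1}M\to 0$ to extract $\Gamma_\Phi\mho^j M\cong\H_\Phi^j M=0$ and then $\Ass E^j=\Ass\mho^j M\subseteq\Phi^\complement$. Given this, at step $k$ the inductive hypothesis $\H_\Phi^{\le n+k-1}M=0$ ensures $\Ass E^i\subseteq\Phi^\complement$ for $0\le i\le n+k-1$; truncating the injective resolution produces the exact sequence
$$
\cdots\to 0\to M\to E^0\to\cdots\to E^{n+k-1}\to\mho^{n+k}M\to 0,
$$
whose last $n+1$ injective terms, relabeled $I_0=E^{n+k-1},\ldots,I_n=E^{k-1}$, meet the $n$-coherence hypothesis. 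Hence $\Ass\mho^{n+k}M\subseteq\Phi^\complement$, so $\Gamma_\Phi\mho^{n+k}M=0$, and combined with $\Gamma_\Phi E^{n+k-1}=0$ this is precisely $\H_\Phi^{n+k}M=0$, completing the induction.

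The main obstacle will be carefully bookkeeping the index shifts in the iterated long exact sequences; in particular, for the forward direction, the sub-claim about $\Ass E^i\subseteq\Phi^\complement$ must be established before the $n$-coherence hypothesis can even be invoked, and one must verify that the truncated resolution, extended by zeros on the left, satisfies the exactness requirement of Definition~\ref{3.4}. Everything else is standard interplay between the local cohomology long exact sequence and the fact that an injective hull preserves associated primes.
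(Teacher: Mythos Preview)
Your proof is correct and follows essentially the same route as the paper's. The only differences are cosmetic: the paper outsources your sub-claim (that $\H_\Phi^{\le N}M=0$ forces $\Ass E^j\subseteq\Phi^\complement$ for $j\le N$) to \cite[Proposition~4.5]{cd}, and in the ``only if'' direction it invokes Remark~\ref{4}(2) (that $n$-coherence implies $m$-coherence for $m\ge n$) to reduce to the single case $k=1$, whereas you carry out the induction on $k$ explicitly.
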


\begin{proof}
To show the ``if'' part, take an exact sequence $0\to K\xrightarrow{f^0}I^0\xrightarrow{f^1}\cdots\xrightarrow{f^n}I^n\xrightarrow{f^{n+1}}C\to0$ with $I^i \in \Ass_{\Inj R}^{-1}(\Phi^\complement)$ for all $0\le i\le n$.
It follows from \cite[Proposition 4.5]{cd} that $\H_\Phi^i(K) = 0$ for $0 \le i \le n$.
By assumption $\H_\Phi^{n+1}(K) = 0$.
Let $K^i$ be the image of $f^i$ for each $0\le i\le n+1$.
There is an exact sequence $0\to K^i\to I^i\to K^{i+1}\to0$ for each $0\le i\le n$.
We get $\H_\Phi^1(K^n)\cong\H_\Phi^2(K^{n-1})\cong\cdots\cong\H_\Phi^{n+1}(K^0)=0$.
Combining this with the equality $\Gamma_\Phi(I^n)=0$ from \cite[Proposition 3.2(1)]{cd}, we see that $\Gamma_\Phi(K^{n+1})=0$.
The module $C=K^{n+1}$ is embedded in $\e_R(K^{n+1})$, which belongs to $\Ass_{\Inj R}^{-1}(\Phi^\complement)$.

We show the ``only if'' part.
By Remark \ref{4}(2), it suffices to verify $\H_\Phi^{n+1}(M)=0$ for an $R$-module $M$ with $\H_\Phi^{\le n}(M) = 0$.
By \cite[Proposition 4.5]{cd} we get an exact sequence $0\to M\to I^0\to\cdots\to I^n$ with $I^i\in\Ass_{\Inj R}^{-1}(\Phi^\complement)$ for all $i\le n$.
As $\Phi$ is $n$-coherent, this is extended to an exact sequence $0\to M\to I^0\to\cdots\to I^n\to I^{n+1}$ with $I^{n+1}\in\Ass_{\Inj R}^{-1}(\Phi^\complement)$.
Using \cite[Proposition 4.5]{cd} again, we obtain $\H_\Phi^{n+1}(M)=0$.
\end{proof}

\begin{rem}
For the case $n=1$, this proposition recovers \cite[Theorem 4.9 (2) $\Leftrightarrow$ (4)]{epi}.
Indeed, for an $R$-module $M$, there is an $R$-module $\mathrm{D}_\Phi(M)$ such that
$\H_\Phi^i(\mathrm{D}_\Phi(M)) = 0$ for $i=0,1$ and $\H_\Phi^i(M) \cong\H_\Phi^i(\mathrm{D}_\Phi(M))$ for $i >1$, see \cite[Proposition 3.9]{cd} for details.
Therefore, by the above proposition, $\Phi^\complement$ is coherent if and only if $\H_\Phi^{>1}(M) = 0$ for all $R$-modules $M$.
\end{rem}

\section{Classification of $n$-wide $(\E,\oplus)$-closed subcategories of $\Mod R$}

In the previous section, we recalled/introduced the notions of an $n$-wide subcategory of $\Mod R$ and an $n$-coherent subset of $\Spec R$ and studied their basic properties.
The aim of this section is to explore the relationship between them.

We introduce a new series of subcategories of modules, and investigate some properties of them.

\begin{nota}
Let $\X$ be a subcategory of $\Mod R$.
For $n \in\NN$ we denote by $\C_\X^n$ the subcategory of $\Mod R$ consisting of modules $M$ admitting an exact sequence $0 \to M \to I^0 \to I^1 \to \cdots\to I^n$ with $I^i\in \Inj R \cap \X$ for all $i$.
Set $\C_\X^{-1} = \Mod R$.
When we consider $\X := \Ass_{\Mod R}^{-1}(\Phi)$ for a subset $\Phi$ of $\Spec R$, we set $\C_\Phi^n := \C_\X^n$ for each $n$.
\end{nota}

Here, we list basic properties of the above series of subcategories, which easily follow from the definition.

\begin{rem}\label{8}
Let $n\ge0$ be an integer, $\X$ a subcategory of $\Mod R$ and $\Phi$ a subset of $\Spec R$.
\begin{enumerate}[(1)]
\item
The subset $\Phi$ is $n$-coherent if and only if $\C_{\Phi}^n = \C_{\Phi}^{i}$ for all $i \ge n$.
\item
There is a filtration $\Mod R=\C_\X^{-1}\supseteq\C_\X^0 \supseteq \C_\X^1 \supseteq \cdots \supseteq \bigcap_{i\ge 0} \C_\X^i$ of subcategories, and the equality $\bigcap_{i\ge0}\C_\Phi^i = \supp_{\Mod R}^{-1}(\Phi)$ holds.
In particular, $\Phi$ is $n$-coherent if and only if	 $\supp_{\Mod R}^{-1}(\Phi)= \C_\Phi^n$.
\item
If $\X$ is $\E$-closed and contains $\bigcap_{i\ge0}\C_\X^i$, then $\X$ is $\ee$-closed.
\item
Suppose that $\X$ is closed under direct summands.
Then the following hold.
\begin{enumerate}[(a)]
\item
An $R$-module $M$ is in $\C_\X^n$ if and only if $\E^i(M) \in \X$ for all $0 \le i \le n$.
\item
The subcategory $\X$ is $\E$-closed if and only if $\X\subseteq\bigcap_{i\ge0}\C_\X^i$.
Hence, $\X$ is $\ee$-closed if and only if $\X=\bigcap_{i\ge0}\C_\X^i$.
\end{enumerate}
\end{enumerate}
\end{rem}

\begin{lem}\label{ex}
Let $\X$ be a subcategory of $\Mod R$ closed under finite direct sums.
Let $0 \to L \xrightarrow{f} M \xrightarrow{g} N \to 0$ be an exact sequence of $R$-modules.
Let $n\ge0$ be an integer.
Then the first two of the following implications hold true, and so is the third if $\X$ is closed under direct summands.
\begin{enumerate}[\rm(i)]
\item
$L \in \C_\X^n,\,N \in \C_\X^{n}\Rightarrow M \in \C_\X^n$,
\item
$M \in \C_\X^n,\,N \in \C_\X^{n-1}\Rightarrow L \in \C_\X^n$,
\item
$L \in \C_\X^n,\,M \in \C_\X^{n-1}\Rightarrow N \in \C_\X^{n-1}$.
\end{enumerate}
\end{lem}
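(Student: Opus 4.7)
The plan is to prove the three implications in turn; each will follow from splicing injective coresolutions around the given short exact sequence. Throughout, for $X\in\C_\X^k$ I will write $0\to X\to I_X^0\to\cdots\to I_X^k$ for a witnessing coresolution with each $I_X^i\in\Inj R\cap\X$, and I will use silently that the first cosyzygy $I_X^0/X$ then lies in $\C_\X^{k-1}$.

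For (i), I will run the classical horseshoe construction on $0\to L\to M\to N\to 0$. Given witnessing coresolutions of $L$ and $N$ of length $n$, injectivity of $I_L^0$ extends $L\to I_L^0$ along $L\hookrightarrow M$, producing an injection $M\hookrightarrow I_L^0\oplus I_N^0$ whose cokernel fits in a short exact sequence $0\to I_L^0/L\to (I_L^0\oplus I_N^0)/M\to I_N^0/N\to 0$ of first cosyzygies. Closure of $\X$ under finite direct sums is what keeps $I_L^0\oplus I_N^0$ in $\Inj R\cap\X$, and iteration of this step yields a coresolution $0\to M\to I_L^0\oplus I_N^0\to\cdots\to I_L^n\oplus I_N^n$ witnessing $M\in\C_\X^n$.

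For (ii), I exploit the given coresolution of $M$ directly. From $L\hookrightarrow M\hookrightarrow I_M^0$ I form the quotient $I_M^0/L$, which fits into the two short exact sequences $0\to L\to I_M^0\to I_M^0/L\to 0$ and $0\to N\to I_M^0/L\to I_M^0/M\to 0$. The outer terms of the latter both lie in $\C_\X^{n-1}$, namely $N$ by hypothesis and $I_M^0/M$ as the first cosyzygy of $M\in\C_\X^n$, so step (i) forces $I_M^0/L\in\C_\X^{n-1}$, and splicing a witnessing coresolution of $I_M^0/L$ to the former short exact sequence produces a length-$n$ coresolution of $L$ with every term in $\Inj R\cap\X$.

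For (iii), the delicate step and the only one needing direct summand closure, I will form the pushout $P$ of the inclusions $L\hookrightarrow M$ and $L\hookrightarrow I_L^0$. The pushout produces short exact sequences $0\to M\to P\to I_L^0/L\to 0$ and $0\to I_L^0\to P\to N\to 0$, the latter splitting because $I_L^0$ is injective. Since $I_L^0/L$ is the first cosyzygy of $L\in\C_\X^n$ and $M\in\C_\X^{n-1}$, step (i) gives $P\in\C_\X^{n-1}$, so I may fix a coresolution $0\to P\to J^0\to\cdots\to J^{n-1}$ with $J^i\in\Inj R\cap\X$. The composed injection $I_L^0\hookrightarrow P\hookrightarrow J^0$ splits, whence $J^0\cong I_L^0\oplus (J^0/I_L^0)$; quotienting the full coresolution of $P$ by this copy of $I_L^0$ yields $0\to N\to J^0/I_L^0\to J^1\to\cdots\to J^{n-1}$, which a routine diagram chase shows to be exact. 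The main obstacle is precisely at this final step: for $J^0/I_L^0$ to land in $\Inj R\cap\X$ one must appeal to closure of $\X$ under direct summands, without which I see no way to transfer the coresolution from $P$ down to its summand $N$.
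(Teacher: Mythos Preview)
Your proof is correct. For (i) you and the paper both invoke the horseshoe lemma. For (ii) and (iii), however, your route differs from the paper's: the paper lifts $g$ (respectively $f$) to a map of the given injective resolutions and takes the mapping cone, obtaining in one stroke the resolutions $(0\to I_M^0\to I_N^0\oplus I_M^1\to\cdots\to I_N^{n-1}\oplus I_M^n\to\cdots)$ of $L$ and $(0\to(I_M^0\oplus I_L^1)/I_L^0\to I_M^1\oplus I_L^2\to\cdots)$ of $N$. You instead reduce (ii) and (iii) to (i): for (ii) you observe that $I_M^0/L$ is an extension of $I_M^0/M\in\C_\X^{n-1}$ by $N\in\C_\X^{n-1}$ and splice; for (iii) you form the pushout $P\cong I_L^0\oplus N$, place it in $\C_\X^{n-1}$ via (i), and then quotient a witnessing coresolution of $P$ by the injective summand $I_L^0$. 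If one unwinds your constructions using the horseshoe resolution for the intermediate module, one recovers exactly the paper's mapping-cone resolutions, so the two arguments are ultimately the same construction viewed from different angles. Your presentation has the mild advantage of being entirely internal to the statement (it uses only part (i) and elementary pushout/quotient manipulations, with no appeal to chain-level mapping cones); the paper's version is more compact and makes the shape of the resulting resolution explicit. One small point: when $n=0$ your appeals to ``step (i)'' in parts (ii) and (iii) would formally require the case $n-1=-1$, but there the conclusion $\C_\X^{-1}=\Mod R$ is trivial, so no harm is done.
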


\begin{proof}
The first implication directly follows from the horseshoe lemma.
To show the second, take a lift $I\to J$ of $g$ to injective resolutions of $M$ and $N$ with $I^i,J^i\in\X$ for all $0\le i\le n$ and $0\le j\le n-1$.
Taking the mapping cone, we get an injective resolution $(0\to I^0\to J^0\oplus I^1\to\cdots\to J^{n-1}\oplus I^n\to\cdots)$ of the module $L$, whose first $n+1$ terms are in $\X$.
This shows $L \in \C_\X^n$.
The third implication is similarly shown: taking a lift $I\to J$ of $f$ to injective resolutions of $L$ and $M$ and the mapping cone, we get an injective resolution $(0\to(J^0\oplus I^1)/I^0\to J^1\oplus I^2\to\cdots\to J^{n-1}\oplus I^n\to\cdots)$ of $N$ whose first $n$ terms are in $\X$, if $\X$ is closed under direct summands.
Here, the injectivity of $(J^0\oplus I^1)/I^0$ follows since $I^0$ is injective and $I^0 \to J^0\oplus I^1$ is a monomorphism.
\end{proof}

\begin{rem}
This lemma generalizes the depth lemma.
Indeed, if we consider $\Phi:= \d(\a)$ with $\a$ an ideal of $R$, then $\C_\Phi^n$ is nothing but the class of $R$-modules $M$ with $\grade(\a,M) > n$ in the sense of \cite[Definition 9.1.1]{BH} by \cite[Theorem 2.1]{FI} and \cite[Proposition 4.5]{cd}.
\end{rem}

The following result gives a way to construct an $n$-wide subcategory from a given $n$-coherent subset.

\begin{prop}\label{9}
Let $n\ge0$ be an integer, and let $\Phi$ be a subset of $\Spec R$.
\begin{enumerate}[\rm(1)]
\item
The subcategory $\C_\Phi^n$ of $\Mod R$ is closed under direct sums, extensions and $n$-kernels.
\item
If $\Phi$ is $n$-coherent, then $\supp_{\Mod R}^{-1}(\Phi) = \C_\Phi^n$ is an $n$-wide $(\ee,\oplus)$-closed subcategory of $\Mod R$.
\end{enumerate} 
\end{prop}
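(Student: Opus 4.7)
The two claims split naturally, with (2) building on (1) together with $n$-coherence and an additional argument for closure under $n$-cokernels.

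For (1), closure of $\C_\Phi^n$ under direct sums is immediate: a direct sum of the defining exact sequences $0\to M_\lambda\to I_\lambda^0\to\cdots\to I_\lambda^n$ stays exact, and each $\bigoplus_\lambda I_\lambda^j$ is an injective lying in $\Ass^{-1}(\Phi)$ (since $R$ is noetherian and $\Ass$ commutes with direct sums). Closure under extensions follows directly from Lemma~\ref{ex}(i). For closure under $n$-kernels I would induct on $n$: the base $n=0$ is the trivial observation that a submodule of $X^0\in\C_\Phi^0$ embeds into the same injective. In the inductive step, given $0\to M\to X^0\to X^1\to\cdots$ with $X^i\in\C_\Phi^n$ for $0\le i\le n$, set $K^1:=\Im(X^0\to X^1)$; applying the induction hypothesis to the tail $0\to K^1\to X^1\to X^2\to\cdots$ (and using $\C_\Phi^n\subseteq\C_\Phi^{n-1}$) gives $K^1\in\C_\Phi^{n-1}$, whereupon Lemma~\ref{ex}(ii) on $0\to M\to X^0\to K^1\to 0$ yields $M\in\C_\Phi^n$.

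For (2), the identification $\supp_{\Mod R}^{-1}(\Phi)=\C_\Phi^n$ under $n$-coherence is Remark~\ref{8}(2), so (1) already gives closure under direct sums, extensions, and $n$-kernels. The $\ee$-closedness follows from Remark~\ref{8}(3): $\supp^{-1}(\Phi)$ is $\E$-closed by Remark~\ref{rem}(1) (each $\E^i(M)$ has $\Ass\E^i(M)\subseteq\supp M\subseteq\Phi$, and injectives satisfy $\supp=\Ass$), and it equals $\bigcap_{i\ge 0}\C_\Phi^i$. The remaining, central task is closure under $n$-cokernels.

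Given exact $\cdots\to X_1\to X_0\to M\to 0$ with $X_i\in\supp^{-1}(\Phi)$ for $0\le i\le n$, set $K_i:=\ker(X_i\to X_{i-1})$ with the convention $X_{-1}:=M$; these give short exact sequences $0\to K_i\to X_i\to K_{i-1}\to 0$ for $0\le i\le n$. I would first establish by induction on $n$, without yet invoking $n$-coherence, the auxiliary claim that for any exact sequence $0\to K\to Y_n\to Y_{n-1}\to\cdots\to Y_0\to Z\to 0$ with $Y_i\in\C_\Phi^n$ one has $K\in\C_\Phi^n$: splitting off $K':=\Im(Y_n\to Y_{n-1})$ and applying the induction hypothesis to the tail gives $K'\in\C_\Phi^{n-1}$, and Lemma~\ref{ex}(ii) on $0\to K\to Y_n\to K'\to 0$ then yields $K\in\C_\Phi^n$. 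Applied to our sequence, $K_n\in\C_\Phi^n$, and $n$-coherence promotes this via Remark~\ref{8}(2) to $K_n\in\supp^{-1}(\Phi)$. To finish, I propagate forward: $\supp^{-1}(\Phi)$ is closed under cokernels of monomorphisms because the distinguished triangle $L\to M\to N\to L[1]$ in $\D(\Mod R)$ forces $\supp N\subseteq\supp L\cup\supp M$, so each SES $0\to K_i\to X_i\to K_{i-1}\to 0$ gives $K_{i-1}\in\supp^{-1}(\Phi)$, and after $n+1$ iterations we reach $M=K_{-1}\in\supp^{-1}(\Phi)$. The main delicacy is arranging the auxiliary induction so that it lands precisely in $\C_\Phi^n$ (collapsing to $\supp^{-1}(\Phi)$ only after $n$-coherence is invoked), and recognizing Lemma~\ref{ex}(ii) as the correct driver of the inductive step.
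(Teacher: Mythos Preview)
Your proof is correct and follows essentially the same route as the paper. Both arguments drive part~(1) by climbing through the images via Lemma~\ref{ex}(ii), and for part~(2) both first use $n$-kernel closure to place the leftmost kernel in $\C_\Phi^n=\supp^{-1}(\Phi)$ (your ``auxiliary claim'' is just part~(1) restated, so it is redundant), then push forward through the short exact sequences. The only cosmetic difference is the forward-propagation tool: the paper repeatedly applies Lemma~\ref{ex}(iii) (which, after $n$-coherence collapses $\C_\Phi^{n+1}=\C_\Phi^n$, says exactly that $\supp^{-1}(\Phi)$ is closed under cokernels of monomorphisms), whereas you obtain the same closure from the triangle $L\to M\to N\to L[1]$ and additivity of support.
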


\begin{proof}
(1) Let $0 \to M \xrightarrow{f_0} X_0 \xrightarrow{f_1}\cdots \xrightarrow{f_n} X_n$ be an exact sequence with $X_i \in \C_\Phi^n$ for all $i$.
Let $U_i$ be the image of $f_i$.
The inclusions $U_n \hookrightarrow X_n \hookrightarrow \e_R(X_n)$ show $U_n \in \C_\Phi^0$.
Using the second implication in Lemma \ref{ex} inductively, we are done.

(2) The equality follows from Remark \ref{8}(2).
By (1) it suffices to show that $\supp_{\Mod R}^{-1}(\Phi)$ is closed under $n$-cokernels.	
Consider an exact sequence $0 \to M \to X_0 \to X_1 \to \cdots \to X_n \to N \to 0$ with $X_i \in \supp_{\Mod R}^{-1}(\Phi)$ for all $i$.
Applying the third implication in Lemma \ref{ex} repeatedly and using Remark \ref{8}(2), we observe that $N$ belongs to $\supp_{\Mod R}^{-1}(\Phi)$.
\end{proof}

For an $R$-module $M$ we denote by $\dim_RM$ the {\em (Krull) dimension} of $M$, i.e., $\dim_RM=\sup\{\dim R/\p\mid\p\in\Supp_RM\}$.
The next proposition gives a sufficient condition for a subset of a given $n$-coherent subset to be again $n$-coherent.

\begin{prop}\label{923-1}
Let $n\ge 0$ be an integer.
Let $\Phi,\Psi$ be subsets of $\Spec R$, and assume that $\Psi$ is $n$-coherent.
Suppose either that {\rm(i)} $\height\p\le n$ for all $\p\in\Phi$ or that {\rm(ii)} $\dim R/\p\le n$ for all $\p\in\Psi$.
Then $\Psi\setminus\Phi$ is $n$-coherent.
\end{prop}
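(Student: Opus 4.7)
The plan is to reduce to the local case and then dispatch the two cases by a local-cohomology vanishing argument. First, by Proposition~\ref{222} it suffices to show that $(\Psi\setminus\Phi)_\p = \Psi_\p\setminus\Phi_\p$ is $n$-coherent for every $\p\in\Spec R$; the $n$-coherence of $\Psi_\p$ is automatic from the same proposition, and both hypotheses transfer to the localization via the elementary identities $\height_{R_\p}(\q R_\p) = \height_R\q$ for $\q\subseteq\p$ in $\Phi$ (case (i)) and $\dim R_\p/\q R_\p\le\dim R/\q$ for $\q\subseteq\p$ in $\Psi$ (case (ii)). So I may assume $R$ is local.

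Next I would use the reformulation from Remark~\ref{8}(2): the $n$-coherence of $\Psi\setminus\Phi$ is equivalent to the equality $\supp^{-1}_{\Mod R}(\Psi\setminus\Phi) = \C_{\Psi\setminus\Phi}^n$. Given an $R$-module $M$ with $\Ass\E^i(M)\subseteq\Psi\setminus\Phi$ for $0\le i\le n$, the $n$-coherence of $\Psi$ gives $\supp M\subseteq\Psi$, and it remains to show $\Ass\E^i(M)\cap\Phi = \emptyset$ for $i>n$. Equivalently, for every $\q\in\Phi\cap\Psi$ the Bass number $\mu_i(\q,M) = \dim_{\kappa(\q)}\Ext^i_{R_\q}(\kappa(\q),M_\q)$ must vanish for all $i$, knowing it vanishes for $0\le i\le n$.

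In case~(i), $R_\q$ is local of Krull dimension at most $n$. The vanishing of $\Ext^i_{R_\q}(\kappa(\q),M_\q)$ for $0\le i\le n$ is equivalent to $\depth_{R_\q}(M_\q) > n$, i.e., $\H^i_{\q R_\q}(M_\q) = 0$ for $0\le i\le n$, via the classical identification of depth by local cohomology and by $\Ext$ against the residue field. Coupled with Grothendieck's vanishing $\H^i_{\q R_\q}(M_\q) = 0$ for $i>\dim R_\q\le n$, this yields $\R\Gamma_{\q R_\q}(M_\q)\simeq 0$. Since $\kappa(\q)$ is $\q R_\q$-power-torsion, $\R\Hom_{R_\q}(\kappa(\q),M_\q)\simeq\R\Hom_{R_\q}(\kappa(\q),\R\Gamma_{\q R_\q}(M_\q))\simeq 0$, so $\mu_i(\q,M) = 0$ for every $i$, completing case~(i).

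Case~(ii) requires a different twist, since $\height\q = \dim R_\q$ is unbounded in this setting and the argument above does not apply directly. My plan is to localize instead at a maximal ideal $\m\supseteq\q$, where $\dim R_\m/\q R_\m \le \dim R/\q\le n$, and to run an analogous local-cohomology vanishing argument working inside the quotient ring $R_\m/\q R_\m$, exploiting the fact that $\supp M\subseteq\Psi\subseteq\{\p:\dim R/\p\le n\}$ forces dimension bounds on the associated primes of the injective resolution of $M_\m$. The main obstacle I anticipate is pinning down the correct rigidity/vanishing statement that uses the codimension bound $\dim R/\q\le n$ in place of the height bound $\height\q\le n$; this is where I expect the most delicate cohomological work to take place, and it is the step that would require the most care in a complete write-up.
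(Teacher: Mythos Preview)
Your case~(i) is correct and matches the paper's argument in spirit (the paper computes $\H^i_{\p R_\p}(M_\p)=0$ directly from $\Gamma_{\p R_\p}(I^i_\p)=0$ rather than via the Ext/local-cohomology identification of depth, but the content is the same). Two minor points: the localization of $n$-coherence is Proposition~\ref{2}(2), not Proposition~\ref{222}, and the reduction to the local case is in any event unnecessary --- nothing in your argument uses it.

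Your case~(ii), however, is not merely incomplete: the difficulty you anticipate does not exist. Grothendieck's vanishing theorem asserts $\H^i_{\q R_\q}(M_\q)=0$ for $i>\dim_{R_\q}M_\q$, not for $i>\dim R_\q$; so the input you need is a bound on $\dim_{R_\q}M_\q$, not on $\height\q$. That bound is immediate from the data you already have: since $\Ass M=\Ass\E^0(M)\subseteq\Psi$, one has $\Supp_R M\subseteq\cl(\Psi)$, and every prime in $\cl(\Psi)$ contains a prime of $\Psi$, hence satisfies $\dim R/\p\le n$. Thus $\dim_R M\le n$, and a fortiori $\dim_{R_\q}M_\q\le n$ for every $\q$. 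After this observation, case~(ii) is dispatched by the \emph{identical} local-cohomology computation you wrote out for case~(i). This is precisely what the paper does: it proves $\dim_{R_\p}M_\p\le n$ in both cases (from $\dim R_\p\le n$ in~(i), from $\dim_R M\le n$ in~(ii)) and then invokes Grothendieck vanishing once, uniformly. There is no need to pass to a quotient $R_\m/\q R_\m$, no separate rigidity statement, and no ``delicate cohomological work''; your own argument for~(i) already contains everything required.
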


\begin{proof}
Let $0\to M\to I^0\to\cdots\to I^n\to N\to0$ be an exact sequence with $I^i\in\Ass_{\Inj R}^{-1}(\Psi\setminus\Phi)$ for all $0\le i\le n$.
We want to prove that $\Ass N\subseteq\Psi\setminus\Phi$.
Since $\Psi$ is $n$-coherent, we have $\Ass N\subseteq\Psi$, and it is enough to show that $\Ass N\subseteq\Phi^\complement$.
Take any $\p\in\Phi$.
Then $\p\notin\Ass I^i$ for all $0\le i\le n$.
Hence $\Gamma_{\p R_\p}(I^i_\p)=0$, and therefore $\H_{\p R_\p}^i(M_\p)=0$ for all $0\le i\le n$.

We claim that $\dim_{R_\p}M_\p \le n$.
Indeed, in case (i), the statement holds since $\dim R_\p = \height \p \le n$.
In case (ii), we have $\Supp_R M \subseteq \Supp_R I^0 \subseteq \cl(\Psi)$, where $\cl(\Psi)$ stands for the set of prime ideals $\p$ such that $\p\supseteq\q$ for some $\q\in\Psi$.
It is easy to see that $\dim_RM \le n$, and hence $\dim_{R_\p}M_\p \le n$.

It follows from Grothendieck's vanishing theorem \cite[6.1.2]{BS} that $\H_{\p R_\p}^{> n}(M_\p) = 0$.
Therefore, we get $\H_{\p R_\p}^i(M_\p)=0$ for all integers $i$, and $M_\p\in\supp^{-1}_{\Mod R_\p}(\{\p R_\p\}^\complement)$ by \cite[Remark 3.8(1)]{cd}.
We have $\p R_\p\notin\supp_{R_\p}M_\p$, which implies $\p\notin\supp_RM$.
Now we conclude that $\supp M\subseteq\Phi^\complement$.
Note that $N\cong\mho^{n+1}M\oplus J$ for some direct summand $J$ of $I^n$, which implies $\Ass N=\Ass\mho^{n+1}M\cup\Ass J$.
We have $\Ass\mho^{n+1}M=\Ass\E^{n+1}(M)\subseteq\supp M\subseteq\Phi^\complement$, while $\Ass J\subseteq\Ass I^n\subseteq\Phi^\complement$.
Hence $\Ass N\subseteq\Phi^\complement$, which is what we have wanted to deduce.
\end{proof}	

The following result refines \cite[Corollary A.5]{Kra08} to assert that in the case $\dim R\ge2$ there exists a coherent subset which is generalization-closed.
Also, for $n=1$ this theorem contains \cite[Corollary 4.3]{Kra08}.

\begin{cor}\label{10}
The following are equivalent for an integer $n\ge0$.
\begin{enumerate}[\rm(1)]
\item
Every subset of $\Spec R$ is $n$-coherent.
\item
Every generalization-closed subset of $\Spec R$ is $n$-coherent.
\item
The set $(\Max R)^\complement$ of nonmaximal prime ideals of $R$ is $n$-coherent.
\item
One has $\H_\Phi^{>n}(-)=0$ for every specialization-closed subset $\Phi$ of $\Spec R$.
\item
The inequality $\dim R \le n$ holds.
\end{enumerate}
\end{cor}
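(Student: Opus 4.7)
The plan is to prove the cycle $(5)\Rightarrow(1)\Rightarrow(2)\Rightarrow(3)\Rightarrow(5)$ together with a separate equivalence $(5)\Leftrightarrow(4)$. The implications $(1)\Rightarrow(2)\Rightarrow(3)$ are immediate once one observes that $(\Max R)^\complement$ is generalization-closed: if $\q\subseteq\p$ with $\q$ maximal then $\q=\p$, so non-maximality of $\p$ descends to every $\q\subseteq\p$.

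For $(5)\Rightarrow(1)$, I apply Proposition \ref{923-1} with $\Psi=\Spec R$. Since $\Spec R$ is specialization-closed it is $0$-coherent by Proposition \ref{3}, hence $n$-coherent by Remark \ref{4}(2). Given any subset $\Phi\subseteq\Spec R$, I take $\Phi^\complement$ in the role of $\Phi$ in Proposition \ref{923-1}; hypothesis (ii) reads $\dim R/\p\le n$ for all $\p\in\Spec R$, which is exactly (5). The conclusion $\Spec R\setminus\Phi^\complement=\Phi$ is $n$-coherent.

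For $(5)\Leftrightarrow(4)$, the forward direction uses the formula $\H_\Phi^i(M)=\varinjlim_{V(\a)\subseteq\Phi}\H_\a^i(M)$ for specialization-closed $\Phi$ combined with Grothendieck vanishing $\H_\a^{>\dim R}(-)=0$. For the converse I argue the contrapositive: if $\dim R>n$, choose a maximal ideal $\m$ with $\height\m>n$ and a prime $\p\subseteq\m$ realizing $\dim R_\m/\p R_\m=\height\m$; setting $M=R/\p$, Grothendieck's non-vanishing theorem gives $\H_{\m R_\m}^{\height\m}(M_\m)\neq 0$, and since $\H_\m^i(M)$ is $\m$-torsion for maximal $\m$ this lifts to $\H_\m^{\height\m}(M)\neq 0$, contradicting (4) applied to the specialization-closed singleton $\{\m\}$ in degree $\height\m>n$.

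The main obstacle is the implication $(3)\Rightarrow(5)$. By Proposition \ref{2}(2) the hypothesis localizes, and for each maximal $\m$ the set $((\Max R)^\complement)_\m=\Spec R_\m\setminus\{\m R_\m\}$ is the punctured spectrum of $R_\m$ and is $n$-coherent. Proposition \ref{rigid} applied to the specialization-closed singleton $\{\m R_\m\}$ in $R_\m$ recasts this as a rigidity statement: every $R_\m$-module $M$ with $\H_{\m R_\m}^{\le n}(M)=0$ automatically satisfies $\H_{\m R_\m}^{>n}(M)=0$. To force $\dim R_\m\le n$, I will invoke a balanced big Cohen--Macaulay $R_\m$-module $B$, whose existence in every local Noetherian ring is guaranteed by Hochster's big Cohen--Macaulay theorem (together with its extension to all characteristics by Andr\'e); any such $B$ satisfies $\H_{\m R_\m}^i(B)=0$ for $i<\dim R_\m$ and $\H_{\m R_\m}^{\dim R_\m}(B)\neq 0$, directly violating the rigidity above whenever $\dim R_\m\ge n+1$. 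Hence $\dim R_\m\le n$ for every maximal $\m$, so $\dim R\le n$.
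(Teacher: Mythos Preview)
Your proof is correct. The overall scaffolding matches the paper's: both note that $(1)\Rightarrow(2)\Rightarrow(3)$ are trivial and derive $(5)\Rightarrow(1)$ from Proposition~\ref{923-1} applied with $\Psi=\Spec R$ (you invoke hypothesis~(ii), the paper invokes~(i); under $\dim R\le n$ either holds). The substantive difference is in the block $(3)\Leftrightarrow(4)\Leftrightarrow(5)$. The paper does not argue this directly but cites \cite[Theorem~4.13]{cd} together with Proposition~\ref{9}(2) and Remark~\ref{4}(4). You instead give self-contained arguments: $(5)\Leftrightarrow(4)$ via Grothendieck's vanishing and non-vanishing theorems, and $(3)\Rightarrow(5)$ by localizing at a maximal ideal (Proposition~\ref{2}(2)), translating $n$-coherence of the punctured spectrum into the rigidity statement of Proposition~\ref{rigid}, and then violating that rigidity with a balanced big Cohen--Macaulay module.

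That last step is valid---for a balanced big CM module $B$ over a local ring of dimension $d$, the Koszul/\v{C}ech description of $\H_\m^\bullet(B)$ indeed gives vanishing below $d$ and non-vanishing at $d$---but it imports a theorem (Hochster in equal characteristic, Andr\'e in mixed characteristic) that is enormously deeper than anything else in this paper. What you gain is independence from the companion paper \cite{cd}; what you lose is proportionality, since \cite[Theorem~4.13]{cd} presumably handles $(3)\Rightarrow(5)$ by arguments internal to the circle of ideas here rather than by appeal to the existence of big Cohen--Macaulay modules.
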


\begin{proof}
The equivalences (3) $\Leftrightarrow$ (4) $\Leftrightarrow$ (5) follow from \cite[Theorem 4.13]{cd}, Proposition \ref{9}(2) and Remark \ref{4}(4), while the implications (1) $\Rightarrow$ (2) $\Rightarrow$ (3) are obvious.
Let $\Xi$ be a subset of $\Spec R$.
Applying Proposition \ref{923-1}(i) to $\Phi:=\Xi^\complement$ and $\Psi:=\Spec R$, we see that (5) implies (1).
\end{proof}

\begin{rem}
Using the above result, we see that Proposition \ref{923-1} fails without (i) or (ii).
In fact, let $\Phi=\Max R$ and $\Psi=\Spec R$.
Then for any integer $n\ge0$ the subset $\Psi$ is $n$-coherent, but $\Psi\setminus\Phi$ is not if $\dim R>n$ by Corollary \ref{10}.
\end{rem}

We here record a remarkable statement.

\begin{cor}\label{infcoh}
Every subset $\Phi$ of $\Spec R$ is $\infty$-coherent.	
\end{cor}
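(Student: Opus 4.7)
The plan is to show that $\Ass_RC\subseteq\Phi$ for any exact sequence $\cdots \to I_1 \to I_0 \to C \to 0$ with each $I_i \in \Inj R$ satisfying $\Ass_R I_i \subseteq \Phi$. Once this is established, the injective hull $\e_R(C)$ serves as the required injective overmodule, since $\Ass_R\e_R(C) = \Ass_RC \subseteq \Phi$, witnessing $\infty$-coherence of $\Phi$.

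Fix $\p \in \Spec R$ with $\p \notin \Phi$; I aim to prove $\p \notin \Ass_RC$. Localization at $\p$ preserves both exactness and injectivity (for commutative noetherian rings), and produces $\Ass_{R_\p}(I_i)_\p \subseteq \Phi_\p$ with $\p R_\p \notin \Phi_\p$; moreover $\p \in \Ass_RC$ if and only if $\p R_\p \in \Ass_{R_\p}C_\p$. Thus the problem reduces to the following claim about the noetherian local ring $(R_\p,\p R_\p)$: \emph{in a noetherian local ring $(R,\m)$, if $\cdots \to I_1 \to I_0 \to C \to 0$ is exact with each $I_i$ injective and $\m \notin \Ass_RI_i$ for every $i \ge 0$, then $\m \notin \Ass_RC$.}

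For the local claim, $\m \notin \Ass_RI_i$ yields $\Gamma_\m(I_i)=0$, while injectivity of $I_i$ gives $\H^{>0}_\m(I_i)=0$, so $\H^p_\m(I_i)=0$ for all $p,i \ge 0$. Setting $K_0 := \ker(I_0 \twoheadrightarrow C)$ and $K_i := \ker(I_i \to I_{i-1})$ for $i \ge 1$, the short exact sequences $0 \to K_0 \to I_0 \to C \to 0$ and $0 \to K_i \to I_i \to K_{i-1} \to 0$, combined with the long exact sequences of local cohomology, yield the dimension-shift chain
\[
\Gamma_\m(C) \cong \H^1_\m(K_0) \cong \H^2_\m(K_1) \cong \cdots \cong \H^{i+1}_\m(K_i)
\]
valid for every $i \ge 0$. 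Since $R$ is noetherian and local, $\dim R < \infty$, and Grothendieck's vanishing theorem \cite[6.1.2]{BS} gives $\H^n_\m(-)=0$ for $n > \dim R$; choosing $i \ge \dim R$ makes $\H^{i+1}_\m(K_i)=0$, forcing $\Gamma_\m(C)=0$ and hence $\m \notin \Ass_RC$. The only delicate point is the reduction to the local case, which is essential to guarantee the finite Krull dimension that Grothendieck vanishing requires; all other steps are routine long-exact-sequence manipulations.
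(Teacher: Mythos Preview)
Your proof is correct and follows essentially the same strategy as the paper: reduce to a local ring (the paper via Proposition~\ref{2}(2), you by localizing at a fixed $\p\notin\Phi$) so that the Krull dimension is finite, and then invoke Grothendieck's vanishing theorem via a dimension-shifting argument with local cohomology. The paper packages this last step through Corollary~\ref{10} (which in turn rests on Proposition~\ref{923-1}), whereas you carry out the long-exact-sequence shift directly, but the underlying mechanism is the same.
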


\begin{proof}
Thanks to Proposition \ref{2}(2), we may reduce to the case where $R$ is a local ring.	
Then every subset of $\Spec R$ is $(\dim R)$-coherent by Corollary \ref{10}, and hence is $\infty$-coherent by Remark \ref{4}(2).
\end{proof}

Now we prove the following theorem.
In view of Remarks \ref{rem2}, \ref{4}, Proposition \ref{3} and \cite[Lemma 3.5]{Kra08}, one observes that this theorem for $n=0$ (resp. $n=1$) yields a one-to-one correspondence between the localizing (resp. wide and $\oplus$-closed) subcategories of $\Mod R$ and the specialization-closed (resp. coherent) subsets of $\Spec R$, which is nothing but the classification theorem of Gabriel \cite[Proposition VI.4]{Gab} (resp. Krause \cite[Theorem 3.1]{Kra08}).

\begin{thm}\label{7}
Let $n$ be a nonnegative integer or $\infty$.
The assignments $\X \mapsto \supp\X$ and $\supp_{\Mod R}^{-1}(\Phi)\mapsfrom\Phi$ give a bijective correspondence between the $n$-wide $(\E,\oplus)$-closed subcategories of $\Mod R$, and the $n$-coherent subsets of $\Spec R$.
\end{thm}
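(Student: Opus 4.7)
The plan is to check that each assignment is well-defined and that the two are mutually inverse, with the crucial step being $\supp^{-1}_{\Mod R}(\supp\X) \subseteq \X$ for an $n$-wide $(\E,\oplus)$-closed subcategory $\X$.

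One direction is immediate from Proposition \ref{9}(2): for $n$-coherent $\Phi$, $\supp^{-1}_{\Mod R}(\Phi)$ is $n$-wide and $(\ee,\oplus)$-closed, hence in particular $(\E,\oplus)$-closed. Moreover $\supp(\supp^{-1}_{\Mod R}(\Phi))=\Phi$: the inclusion $\subseteq$ is obvious, while for $\supseteq$ each $\p\in\Phi$ satisfies $\supp\kappa(\p)=\{\p\}\subseteq\Phi$, so $\kappa(\p)\in\supp^{-1}_{\Mod R}(\Phi)$ and therefore $\p\in\supp\supp^{-1}_{\Mod R}(\Phi)$.

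For the converse direction, let $\X$ be an $n$-wide $(\E,\oplus)$-closed subcategory and put $\Phi:=\supp\X$. By Remark \ref{rem2}(1), $\X$ is closed under $\infty$-kernels, so Remark \ref{rem2}(5) gives that $\X$ is $\ee$-closed; it is also closed under direct summands by Remark \ref{rem2}(4). Consequently Remark \ref{8}(4)(b) yields the identity $\X=\bigcap_{i\ge 0}\C_\X^i$. The inclusion $\X\subseteq\supp^{-1}_{\Mod R}(\Phi)$ is tautological. For the reverse inclusion I would first establish the pivotal intermediate step: $\e_R(R/\p)\in\X$ whenever $\p\in\Phi$. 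Indeed, by definition there exists $X\in\X$ with $\p\in\supp X$, so by Remark \ref{rem}(1) (equality case) the prime $\p$ lies in $\Ass\E^j(X)$ for some $j$, and by Remark \ref{rem}(3) this means $\e_R(R/\p)$ is a direct summand of $\E^j(X)$; since $\X$ is $\E$-closed and closed under direct summands, $\e_R(R/\p)\in\X$. Now for any $M\in\supp^{-1}_{\Mod R}(\Phi)$, combining Remarks \ref{rem}(1) and \ref{rem}(3) shows that every indecomposable direct summand of each $\E^i(M)$ has the form $\e_R(R/\p)$ with $\p\in\Ass\E^i(M)\subseteq\supp M\subseteq\Phi$; hence, by $\oplus$-closure, $\E^i(M)\in\X$ for every $i\ge 0$, which forces $M\in\bigcap_{i\ge 0}\C_\X^i=\X$.

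It remains to verify that $\supp\X$ is itself $n$-coherent, so that $\X\mapsto\supp\X$ is well-defined. This follows from Remark \ref{4}(4) applied to $\Phi:=\supp\X$: we have just shown $\supp^{-1}_{\Mod R}(\Phi)=\X$, which is $n$-wide and therefore closed under $n$-cokernels, so $\Phi$ is $n$-coherent. The main obstacle of the proof is the lemma that each injective indecomposable $\e_R(R/\p)$ with $\p\in\supp\X$ already belongs to $\X$; once this is in hand, the $\E$- and $\oplus$-closure of $\X$ propagate it to all terms of minimal injective resolutions, and $\ee$-closure reconstructs $M$ itself.
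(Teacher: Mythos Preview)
Your argument is correct for finite $n$, and it takes a route that differs from the paper's in two respects. First, where the paper directly verifies the definition of $n$-coherence for $\supp\X$, you instead establish the equality $\supp^{-1}_{\Mod R}(\supp\X)=\X$ and then invoke Remark~\ref{4}(4). Second, the paper obtains the mutual-inverse property by citing \cite[Theorem 2.3]{hcls} (the classification of $(\ee,\oplus,\ominus)$-closed subcategories) together with Remark~\ref{rem2}(5), whereas you essentially re-derive the content of that theorem by hand via the ``indecomposable injectives'' lemma $\e_R(R/\p)\in\X$ for $\p\in\supp\X$. Your route is more self-contained; the paper's is shorter by leaning on the earlier classification.

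There is, however, a genuine gap in the case $n=\infty$. Your well-definedness of $\Phi\mapsto\supp^{-1}_{\Mod R}(\Phi)$ rests solely on Proposition~\ref{9}(2), but that proposition is stated and proved only for integers $n\ge 0$: its argument uses the filtration $\C_\Phi^n$ and Lemma~\ref{ex}, which are intrinsically finite-level statements. For $n=\infty$ one must show that $\supp^{-1}_{\Mod R}(\Phi)$ is $\infty$-wide for an arbitrary subset $\Phi$, and this does not follow from Proposition~\ref{9}. The paper closes this gap by localizing: for each $\p\in\Spec R$, Corollary~\ref{10} gives that $\Phi_\p$ is $(\height\p)$-coherent, so Proposition~\ref{9}(2) yields that $\supp^{-1}_{\Mod R_\p}(\Phi_\p)$ is $(\height\p)$-wide and hence $\infty$-wide; then Proposition~\ref{222} globalizes. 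You should add this step (or an equivalent reduction to finite Krull dimension) to complete the proof.
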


\begin{proof}
Let $n\in\NN$ (resp. $n=\infty$).
We prove that the map $\X \mapsto \supp\X$ is well-defined.
Let $\X$ be an $n$-wide $(\E,\oplus)$-closed subcategory of $\Mod R$.
Let $\cdots\to I_1\to I_0\to C\to0$ be an exact sequence with $I_i\in\Ass_{\Inj R}^{-1}(\supp\X)$ for all $0\le i\le n$ (resp. all $i\ge0$).
Then we have $\Ass I_i\subseteq\supp\X$ and find $X_i\in\X$ with $\Ass I_i\subseteq\supp X_i$ since $\X$ is closed under direct sums.
We observe that $I_i$ is a direct summand of a direct sum of copies of $\bigoplus_{j\ge0}\E^j(X_i)$.
As $\X$ is $(\E,\oplus)$-closed and it is also closed under direct summands by Remark \ref{rem2}(4), the module $I_i$ belongs to $\X$ for all $0\le i\le n$ (resp. all $i\ge0$).
The $n$-wideness of $\X$ shows $C\in\X$, which implies $\e_R(C)\in\Ass_{\Inj R}^{-1}(\supp\X)$.
We conclude that $\supp\X$ is $n$-coherent.

In view of \cite[Theorem 2.3]{hcls} and Remark \ref{rem2}(5), it is enough to prove that the map $\supp_{\Mod R}^{-1}(\Phi)\mapsfrom\Phi$ is well-defined, which follows from Proposition \ref{9} in the case $n\in\NN$.
Let $\Phi$ be an $\infty$-coherent subset of $\Spec R$.
For any prime ideal $\p$ of $R$, we see from Corollary \ref{10} and Proposition \ref{9}(2) that $\supp_{\Mod R_\p}^{-1}(\Phi_\p)$ is $(\height \p)$-wide, and it is $\infty$-wide by Remark \ref{rem2}(1).
Proposition \ref{222} deduces that $\supp_{\Mod R}^{-1}(\Phi)$ is $\infty$-wide.
\end{proof}

\begin{cor}\label{36'}
The assignments $\X \mapsto \supp\X$ and $\supp_{\Mod R}^{-1}(\Phi)\mapsfrom\Phi$ give a bijective correspondence between the localizing  (resp. $\oplus$-closed wide) subcategories of $\Mod R$ and the specialization-closed (resp. coherent) subsets of $\Spec R$.
\end{cor}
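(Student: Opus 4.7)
The plan is to derive this corollary as the $n=0$ and $n=1$ cases of Theorem \ref{7}, once the classes appearing there are translated into the traditional language. On the prime-spectrum side, Proposition \ref{3} identifies $0$-coherent subsets with specialization-closed subsets, and Remark \ref{4}(3) identifies $1$-coherent subsets with coherent subsets in Krause's sense. On the subcategory side, Remark \ref{rem2}(2) gives that ``$0$-wide and $\oplus$-closed'' coincides with ``localizing'', and Remark \ref{rem2}(3) gives that $1$-wide coincides with wide. Hence the corollary will follow from Theorem \ref{7} provided one verifies that the extra $\E$-closure built into Theorem \ref{7} is automatic: every localizing subcategory, and every $\oplus$-closed wide subcategory, of $\Mod R$ is $\E$-closed.

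The main obstacle will be this automatic $\E$-closure. For the $\oplus$-closed wide case I would cite \cite[Lemma 3.5]{Kra08} directly. For the localizing case I would argue by hand: given $M$ in a localizing $\X$, Remark \ref{rem}(3) supplies the decomposition $\E^i(M) \cong \bigoplus_{\p} \e_R(R/\p)^{(\mu_i(\p,M))}$, so it suffices to show $\e_R(R/\p) \in \X$ whenever some module of $\X$ has $\p$ as an associated prime. For such $\p$, the submodule $R/\p$ of that module lies in $\X$; each Ore localization $(R/\p)_s$ with $s \notin \p$ is a quotient of a countable direct sum of copies of $R/\p$ and so lies in $\X$; and the directed colimit over such $s$, which any localizing subcategory respects since a filtered colimit is always a cokernel of a map between direct sums, produces $\kappa(\p) \in \X$. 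Expressing $\e_R(R/\p)$ as the filtered union of its submodules killed by powers of $\p R_\p$, each a successive extension of direct sums of $\kappa(\p)$, then yields $\e_R(R/\p) \in \X$. Finally, induction on $i$ applied to the cosyzygy $\mho^i M \in \X$ via the surjection $\E^{i-1}(M) \twoheadrightarrow \mho^i M$ finishes the verification, and Theorem \ref{7} at $n=0,1$ delivers the two bijections.
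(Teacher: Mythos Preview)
Your proposal is correct and follows the same overall strategy as the paper: specialize Theorem \ref{7} to $n=0,1$, translate $n$-coherence and $n$-wideness via Proposition \ref{3} and Remarks \ref{rem2}, \ref{4}, and then eliminate the $\E$-closure hypothesis. The only difference lies in how you dispose of $\E$-closure in the localizing case. The paper observes that a localizing subcategory, being Serre and $\oplus$-closed, is in particular $\oplus$-closed and wide, so \cite[Lemma 3.5]{Kra08} already covers both cases at once. You instead reprove $\E$-closure for localizing subcategories by hand, building $\e_R(R/\p)$ inside $\X$ from $R/\p$ via filtered colimits and extensions, and then inducting on cosyzygies. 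Your argument is correct (and is essentially the content of Krause's lemma in this special case), but the paper's route is shorter: once you have cited \cite[Lemma 3.5]{Kra08} for the wide case, the localizing case comes for free.
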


\begin{proof}
It follows from Remark \ref{rem2}(2)(3) that the $0$-wide (resp. $1$-wide) subcategories are the Serre (resp. wide) subcategories of $\Mod R$ and it follows from Proposition \ref{3} and Remark \ref{4}(3) that the $0$-coherent (resp. $1$-coherent) subsets are the specialization-closed (resp. coherent) subsets of $\Spec R$.
Therefore, for $n=0$ (resp. $n=1$), Theorem \ref{7} is directly translated as the bijective correspondence between $\E$-closed localizing (resp. $(\E, \oplus)$-closed wide) subcategories of $\Mod R$ and specialization-closed (resp. coherent) subsets of $\Spec R$.
Since \cite[Lemma 3.5]{Kra08} shows every $\oplus$-closed wide subcategory is $\E$-closed, this bijection is nothing but the one that we want.
\if0
\old{It follows from Remarks \ref{rem2}(2)(3), \ref{4}(3), Proposition \ref{3}, and \cite[Lemma 3.5]{Kra08} that the $0$-coherent (resp. $1$-coherent) subsets are the specialization-closed (resp. coherent) subsets and that the $0$-wide (resp. $1$-wide) $(\E, \oplus)$-closed subcategories are the localizing (resp. wide $\oplus$-closed) subcategories.
Thus, the statement follows by letting $n=0,1$ in Theorem \ref{7}.}
\fi
\end{proof}

\begin{ques}
As we have seen above, $\oplus$-closed $n$-wide subcategories of $\Mod R$ are automatically $\E$-closed for $n=0$ and $1$ by \cite[Lemma 3.5]{Kra08}.
However, its proof breaks down for $n>1$ and we do not have any examples of $\oplus$-closed $n$-wide subcategories of $\Mod R$ which are not $\E$-closed.
Can we show that all $\oplus$-closed $n$-wide subcategories of $\Mod R$ are $\E$-closed? 
\end{ques}

\if0
\begin{cor}\label{36}
The assignments $\X \mapsto \supp\X$ and $\supp_{\Mod R}^{-1}(\Phi)\mapsfrom\Phi$ give a bijective correspondence between the wide $\oplus$-closed subcategories of $\Mod R$ and the coherent subsets of $\Spec R$.
\end{cor}

\begin{proof}
The assertion follows from Remarks \ref{rem2}(3), \ref{4}(3), \cite[Lemma 3.5]{Kra08} and letting $n=1$ in Theorem \ref{7}.
\end{proof}
\fi

Takahashi \cite[Theorem 2.3]{hcls} gives a classification of the $(\ee,\oplus,\ominus)$-closed subcategories of $\Mod R$.
As an application of Theorem \ref{7}, we recover this classification by making a connection with $\infty$-wide subcategories.

\begin{cor}\label{35}
Let $\X$ be an $(\E, \oplus)$-closed subcategory of $\Mod R$.
Then $\X$ is $(\ee,\ominus)$-closed if and only if $\X$ is $\infty$-wide.
In particular, there are one-to-one correspondences among the $(\ee,\oplus,\ominus)$-closed subcategories of $\Mod R$, the $(\E, \oplus)$-closed $\infty$-wide subcategory of $\Mod R$, and the subsets of $\Spec R$.
\end{cor}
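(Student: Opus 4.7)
The plan is to handle the equivalence by proving each implication directly, and then to read off the bijections from results already available in the excerpt.

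For the forward direction, suppose $\X$ is $(\E,\oplus)$-closed and $\infty$-wide. Being $\infty$-wide means $\X$ is closed under $\infty$-kernels and $\infty$-cokernels, so Remark \ref{rem2}(4) immediately gives that $\X$ is $\ominus$-closed. Moreover, Remark \ref{rem2}(5) states that closure under $\infty$-kernels together with $\E$-closedness forces $\ee$-closedness. Hence $\X$ is $(\ee,\ominus)$-closed, as required.

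For the converse, suppose $\X$ is $(\E,\oplus)$-closed and $(\ee,\ominus)$-closed. Set $\Phi:=\supp\X$. By Corollary \ref{infcoh}, every subset of $\Spec R$ is $\infty$-coherent; in particular so is $\Phi$. Applying Theorem \ref{7} at $n=\infty$, the subcategory $\Y:=\supp^{-1}_{\Mod R}(\Phi)$ is $(\E,\oplus)$-closed and $\infty$-wide. It remains to show that $\X=\Y$. By \cite[Theorem 2.3]{hcls}, the assignment $\X'\mapsto\supp\X'$ gives a bijection between $(\ee,\oplus,\ominus)$-closed subcategories of $\Mod R$ and subsets of $\Spec R$, with inverse $\Phi'\mapsto\supp^{-1}_{\Mod R}(\Phi')$. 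Since $\X$ is $(\ee,\oplus,\ominus)$-closed, this yields $\X=\supp^{-1}_{\Mod R}(\supp\X)=\Y$, and in particular $\X$ is $\infty$-wide.

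For the final assertion, the equivalence just proved identifies the $(\ee,\oplus,\ominus)$-closed subcategories with the $(\E,\oplus)$-closed $\infty$-wide subcategories of $\Mod R$. The bijection with subsets of $\Spec R$ is then supplied by either Takahashi's classification \cite[Theorem 2.3]{hcls}, or equivalently by Theorem \ref{7} together with Corollary \ref{infcoh} (which collapses ``$\infty$-coherent subsets'' to ``all subsets''). The only point that could potentially be delicate is the identification $\X=\supp^{-1}_{\Mod R}(\supp\X)$ in the converse, but this is precisely what \cite[Theorem 2.3]{hcls} provides, so no genuinely hard step arises: the argument is a clean assembly of Remark \ref{rem2}, Theorem \ref{7}, Corollary \ref{infcoh}, and the classification of \cite{hcls}.
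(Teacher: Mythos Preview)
Your proof is correct and follows the same overall strategy as the paper's. The only variation lies in the implication $(\ee,\ominus)\Rightarrow\infty$-wide: the paper obtains the identification $\X=\supp^{-1}_{\Mod R}(\Phi)$ (with $\Phi=\Ass(\Inj R\cap\X)$) via Remark~\ref{8}(4)(b) and \cite[Lemma~3.3]{Kra08}, whereas you invoke \cite[Theorem~2.3]{hcls} directly to get $\X=\supp^{-1}_{\Mod R}(\supp\X)$---a harmless shortcut, since Theorem~\ref{7} already relies on \cite[Theorem~2.3]{hcls}.
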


\begin{proof}
Suppose that $\X$ is $(\ee,\ominus)$-closed.
Then the equality $\X = \bigcap_{i\ge 0} \C_\X^i$ holds by Remark \ref{8}(4), and this means that $\X = \supp_{\Mod R}^{-1}(\Ass(\Inj R \cap \X))$ by \cite[Lemma 3.3]{Kra08}.
It follows from Corollary \ref{infcoh} and Theorem \ref{7} that $\X$ is $\infty$-wide.
Conversely, suppose that $\X$ is $\infty$-wide.
Then $\X$ is $\ominus$-closed by Remark \ref{rem2}(4).
For an $R$-module $M$ with $\E^i(M) \in \X$ for all $i \ge 0$, one has $M\in\X$ since $\X$ is closed under $\infty$-kernels.
This shows that $\X$ is $\ee$-closed.
Thus the proof of the first assertion is completed.
The second assertion follows from the first one and Theorem \ref{7}.
\end{proof}

We state another corollary of Theorem \ref{7}.
In general, it is difficult to check whether a given subset is $n$-coherent or not.
The first assertion of the corollary gives a necessary condition of $n$-coherence.
We mentioned in Remark \ref{4}(2) that $n$-coherence implies $(n+1)$-coherence for each $n \ge 0$.
The second assertion of the corollary shows that this implication can be strict for an arbitrary $n$.

\begin{cor}\label{33}
\begin{enumerate}[\rm(1)]
\item
Let $\a$ be an ideal of $R$ and $n\ge 0$ an integer.
Let $M$ be a finitely generated $R$-module with $\a M\ne M$.
If $\d(\a)$ is an $n$-coherent subset of $\Spec R$, then one has $\grade(\a,M) \le n$.
\item
Let $\xx= x_1, \ldots, x_n$ be a sequence of elements of $R$.
Then $\d(\xx)$ is an $n$-coherent subset of $\Spec R$.
If the sequence $\xx$ is $R$-regular, then $\d(\xx)$ is not $(n-1)$-coherent.
\item
Let $\Phi$ be a specialization-closed subset of $\Spec R$.
Then the following equivalences hold.
$$
\H_\Phi^{>0}(M)=0\text{ for all $M\in\Mod R$} 
\ \Leftrightarrow\ \supp_{\Mod R}^{-1}(\Phi^\complement) \text{ is localizing}
\ \Leftrightarrow\ \Phi^\complement \text{ is specialization-closed}
\ \Leftrightarrow\ \Phi \text{ is clopen}.
$$
\end{enumerate}
\end{cor}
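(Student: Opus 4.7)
The plan is to derive each of the three parts by translating $n$-coherence of an open set $\d(\a)$ into the local cohomology vanishing provided by Proposition \ref{rigid}, combined with standard facts about grade and the structure of $\Spec R$.

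For (1), Proposition \ref{rigid} applied to $\Phi = \V(\a)$ states that $n$-coherence of $\d(\a) = \V(\a)^\complement$ is equivalent to the implication $\H^{\le n}_\a(M) = 0 \Rightarrow \H^{>n}_\a(M) = 0$. For a finitely generated $M$ with $\a M \ne M$, the grade $\grade(\a, M)$ is finite and equals $\inf\{i \mid \H^i_\a(M) \ne 0\}$. If $\grade(\a, M) > n$, then $\H^{\le n}_\a(M) = 0$, so the hypothesis forces $\H^{>n}_\a(M) = 0$ and hence $\H^i_\a(M) = 0$ for all $i$, contradicting finiteness of the grade.

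For (2), the $n$-coherence of $\d(\xx)$ follows from Proposition \ref{rigid}: since $\a = (\xx)$ is generated by $n$ elements, $\H^{>n}_\a = 0$ on every module (for instance via the \v{C}ech complex on $\xx$), so the required implication holds trivially. For the second assertion I apply (1) with $M = R$: when $\xx$ is $R$-regular, $(\xx)R \ne R$ and $\grade((\xx), R) = n$, so $(n-1)$-coherence of $\d(\xx)$ would force $n \le n-1$, a contradiction.

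For (3), the plan is to close the cycle (d) $\Rightarrow$ (a) $\Rightarrow$ (c) $\Rightarrow$ (d) together with the equivalence (c) $\iff$ (b). If $\Phi$ is clopen, then, since clopen subsets of $\Spec R$ correspond to idempotents of $R$, there is $e^2 = e \in R$ with $\Phi = \V(1-e)$ and a ring decomposition $R = eR \times (1-e)R$, and a direct computation gives $\Gamma_\Phi(M) = eM$, an exact functor of $M$, so $\H^{>0}_\Phi = 0$. Conversely, if $\H^{>0}_\Phi(M) = 0$ for all $M$, then in particular this holds whenever $\Gamma_\Phi(M) = 0$, and Proposition \ref{rigid} with $n = 0$ together with Proposition \ref{3} gives that $\Phi^\complement$ is specialization-closed. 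If both $\Phi$ and $\Phi^\complement$ are specialization-closed, then $\Phi$ is also generalization-closed, hence each of $\Phi$ and $\Phi^\complement$ is a union of irreducible components of $\Spec R$; since $R$ is Noetherian there are only finitely many such components, so $\Phi$ and $\Phi^\complement$ are finite unions of closed sets and $\Phi$ is clopen. The equivalence (c) $\iff$ (b) is immediate from Gabriel's classification as in Corollary \ref{36'}. The main subtle point I expect is the step (c) $\Rightarrow$ (d): converting a subset that is both specialization- and generalization-closed into a clopen subset relies crucially on the Noetherian hypothesis through finiteness of the minimal primes of $R$.
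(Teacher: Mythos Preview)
Your proof is correct. For parts (1) and (2) the paper takes a different route, deferring to Theorem~\ref{7} together with \cite[Proposition~4.10 and Example~4.11]{cd}; your approach via Proposition~\ref{rigid} and the local-cohomology characterization of grade is more self-contained within this paper and arguably more transparent. For part (3), your cycle (d)$\Rightarrow$(a)$\Rightarrow$(c)$\Rightarrow$(d) together with (b)$\Leftrightarrow$(c) matches the paper's logical structure; the only real difference is in (d)$\Rightarrow$(a), where the paper writes $\Phi=\V(\a)$, $\Phi^\complement=\V(\b)$ with $\a+\b=R$ and $\a\cap\b$ nilpotent and invokes the Mayer--Vietoris sequence, whereas you use the idempotent correspondence for clopen subsets and the exactness of $M\mapsto eM$. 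Both are standard and equally short. Your argument for (c)$\Rightarrow$(d) is essentially the paper's, phrased in terms of irreducible components rather than minimal elements of $\Phi$.
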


\begin{proof}
Assertions (1) and (2) immediately follow from Theorem \ref{7} and \cite[Proposition 4.10 and Example 4.11]{cd}.
As for assertion (3), the first and second equivalences are direct consequences of \cite[Theorem 4.9(1)]{cd} and Theorem \ref{7}.

Assume $\Phi$ is clopen and write $\Phi = \V(\a)$ and $\Phi^\complement = \V(\b)$ with radical ideals $\a,\b$.
Then $\a + \b = R$ and $\a \cap \b = \sqrt{0}$.
Hence $\H_{\a+\b}^{\ge0}(M)=\H_{\a\cap\b}^{>0}(M)=0$ for any $M\in\Mod R$.
The Mayer--Vietoris sequence \cite[3.2.3]{BS} implies $\H_\a^{>0}(M)=0$.

If $\Phi^\complement$ is specialization-closed, then $\Phi$ is both specialization-closed and generalization-closed and so is $\Phi^\complement$.
By symmetry, it is enough to show that $\Phi$ is closed.
As $\Phi$ is specialization-closed, we can write $\Phi = \bigcup_{\p \in \min \Phi} \V(\p)$, where $\min \Phi$ stands for the set of minimal elements of $\Phi$ with respect to the inclusion relation.
As $\Phi$ is generalization-closed, $\min \Phi$ consists only of minimal prime ideals and hence is a finite set.
We see that $\Phi$ is closed.
\end{proof}

\section{Classification of $n$-uniform localizing subcategories of $\D(\Mod R)$}

In the last section, we gave a correspondence between the $n$-coherent subsets of $\Spec R$ and the $(\E, \oplus)$-closed $n$-wide subcategories of $\Mod R$.
In this section, we consider the derived category analogue of this correspondence.
To this end, we introduce two kinds of subcategories of $\D(\Mod R)$.

\begin{dfn}\label{5.1}
Let $n\in\NN\cup\{\infty\}$ .
\begin{enumerate}[\rm(1)]
\item
We say that a subcategory $\X$ of $\D(\Mod R)$ is {\em $n$-uniform} provided that if $X\in\X$ and $i\in\Z$ satisfy $\H^jX=0$ for all $i\ne j\in(i-n,i+n)$ and $X^j \in \X$ for all integers $j \in [i-n, i+n]$, then $\ZZ^iX,\,X^i/\BB^iX\in\X$.
\item
We say that a subcategory $\X$ of $\D(\Mod R)$ is {\em $n$-consistent} provided that if $X\in\X$ and $i\in\Z$ satisfy $\H^jX=0$ for all $i\ne j\in(i-n,i+n)$, then $\H^iX\in\X$.
\end{enumerate}
\end{dfn}

\begin{rem}\label{28}
\begin{enumerate}[(1)]
\item
A subcategory $\X$ of $\D(\Mod R)$ is $0$-uniform if and only if for each $X \in \X$ and $i\in\Z$ with $X^i \in \X$ one has $\ZZ^i X,\,X^i/\BB^iX\in\X$.
\item
A subcategory of $\D(\Mod R)$ is $0$-consistent, if and only if it is $1$-consistent, if and only if it is closed under cohomologies.
\item
Let $n$ be a nonnegative integer.
Then any $n$-uniform (resp. $n$-consistent) subcategory of $\D(\Mod R)$ is both $(n+1)$-uniform and $\infty$-uniform (resp. $(n+1)$-consistent and $\infty$-consistent).
\item
Every thick subcategory of $\D(\Mod R)$ is $\infty$-consistent.
In fact, let $X\in\X$, $i\in\Z$ and $\H^jX=0$ for all $i\ne j\in(-\infty,\infty)$.
Then the complex $X$ is isomorphic to the stalk complex $\H^iX[-i]$ in $\D(\Mod R)$.
Since $X\in\X$, we have $\H^iX\in\X$.
\end{enumerate}
\end{rem}

The condition in the definition of $n$-consistency is closed under quasi-isomorphisms, but the condition in the definition of $n$-uniformity is not.
Also, $n$-consistency looks simpler than $n$-uniformity.
We shall show in Theorem \ref{24} that, whenever $n\ne0$, these two notions are equivalent for localizing subcategories\footnote{Thus the reader may wonder why we introduce the notion of $n$-uniform subcategories. A benefit from introducing this notion is that Theorem \ref{8-2} can be shown by combination of classical results. More precisely, it can be done without Lemma \ref{23} which comes from the Nakamura--Yoshino theory, and so even the reader who does not know this theory can understand it.}.
We first investigate $0$-uniform localizing subcategories in relation to smashing subcategories.

\begin{dfn}\label{5.3}
A thick subcategory $\X$ of $\D(\Mod R)$ is called {\em smashing} if the inclusion functor $\X\hookrightarrow\D(\Mod R)$ admits a right adjoint which preserves direct sums.
In particular, it is localizing; see Remark \ref{26}.
\end{dfn}

\begin{prop}\label{27}
The $0$-uniform localizing subcategories of $\D(\Mod R)$ are the smashing subcategories of $\D(\Mod R)$.
\end{prop}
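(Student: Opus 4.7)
The plan is to prove the equivalence by interpolating through the intermediate subcategory $\Y := \X\cap\Mod R$ of those modules whose stalk complex in degree $0$ lies in $\X$. I will show that a localizing subcategory $\X\subseteq\D(\Mod R)$ is $0$-uniform if and only if $\Y$ is a Serre subcategory of $\Mod R$; together with Gabriel's classification of localizing subcategories of $\Mod R$ by specialization-closed subsets of $\Spec R$ and Neeman's classification of smashing subcategories of $\D(\Mod R)$ by the same subsets via $\supp$, this will yield the proposition. Observe at the outset that $\Y$ is automatically closed under direct sums and extensions (inherited from $\X$ being localizing in the triangulated sense), and that $\supp\Y=\supp\X$, because $\kappa(\p)\in\X\cap\Mod R$ for each $\p\in\supp\X$ by Neeman's classification of localizing subcategories of $\D(\Mod R)$.

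For the direction $(\Rightarrow)$, the key is to produce a single acyclic complex such that $0$-uniformity extracts both a submodule and its quotient at one and the same time. Given $M\in\Y$ and a submodule $N\subseteq M$, I would consider
$$
X \;=\; (\,\cdots\to 0\to N\hookrightarrow M\twoheadrightarrow M/N\to 0\to\cdots\,)
$$
placed in degrees $0,1,2$. Being acyclic, $X$ represents the zero object of $\D(\Mod R)$ and so lies in $\X$. The term $X^1=M$ is in $\X$ by hypothesis, so applying $0$-uniformity at $i=1$ yields $\ZZ^1X=N\in\X$ and $X^1/\BB^1 X=M/N\in\X$ simultaneously. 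Hence $\Y$ is closed under submodules and quotients, which, combined with the automatic closure properties, makes $\Y$ a localizing subcategory of $\Mod R$. Gabriel's theorem then identifies $\supp\Y=\supp\X$ as a specialization-closed subset, and Neeman's classification forces $\X$ to be smashing.

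The direction $(\Leftarrow)$ is routine: if $\X$ is smashing, then by Neeman's classification $\supp\X$ is specialization-closed, so $\Y$ coincides with the Gabriel localizing subcategory of $\Mod R$ corresponding to this subset and is, in particular, closed under submodules and quotients. For any $X\in\X$ and any index $i$ with $X^i\in\X$, we have $X^i\in\Y$, so both $\ZZ^iX\subseteq X^i$ and $X^i/\BB^iX$ lie in $\Y\subseteq\X$. The principal obstacle is spotting the correct acyclic test complex in the $(\Rightarrow)$ direction; once that is in place, the statement becomes an assembly of the classical classifications of Gabriel and Neeman.
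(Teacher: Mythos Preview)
Your proof is correct and follows essentially the same route as the paper: both reduce the statement to showing that a localizing subcategory $\X$ of $\D(\Mod R)$ is $0$-uniform if and only if $\X\cap\Mod R$ is a localizing subcategory of $\Mod R$, and both establish this via the same acyclic test complex $0\to L\to M\to N\to 0$ (the paper places $M$ in degree $0$, you in degree $1$, which is immaterial). The closing appeals to Gabriel's and Neeman's classifications are likewise identical.
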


\begin{proof}
Let $\X$ be a localizing subcategory of $\D(\Mod R)$.
Then $\X=\supp_{\D(\Mod R)}^{-1}(\Phi)$ for some subset $\Phi$ of $\Spec R$ by \cite[Theorem 2.8]{Nee}.
If $\X$ is smashing, then $\Phi$ is specialization-closed by \cite[Theorem 3.3]{Nee}, and $\X\cap\Mod R=\supp_{\Mod R}^{-1}(\Phi)$ is localizing by \cite[page 425]{Gab} (see also \cite[Corollary 3.6]{Kra08}).
Conversely, if $\X\cap\Mod R$ is localizing, then by \cite[page 425]{Gab} again $\supp\X=\supp(\X\cap\Mod R)=\Phi$ is specialization-closed, and $\X$ is smashing by \cite[Theorem 3.3]{Nee} again.
Thus it suffices to show that $\X$ is $0$-uniform if and only if $\X\cap\Mod R$ is localizing.

Suppose that the subcategory $\X$ is $0$-uniform.
The localizing property of $\X$ implies that $\X\cap\Mod R$ is closed under direct sums.
Let $0\to L\to M\to N\to0$ be an exact sequence of $R$-modules.
Then an exact triangle $L\to M\to N\to L[1]$ in $\D(\Mod R)$ is induced.
If $L,N\in\X$, then $M\in\X$ since $\X$ is localizing.
Now, assume $M\in\X$.
Then consider the complex $X=(0\to L\to M\to N\to 0)$ with $M$ being in degree $0$.
This complex is exact, so that $X\cong0$ in $\D(\Mod R)$.
As $\X$ contains $0$, it also contains $X$.
Since $X^0=M$ belongs to $\X$, the modules $L=\ZZ^0X$ and $N=X^0/\BB^0X$ belong to $\X$ as well.
We conclude that $\X\cap\Mod R$ is localizing.

Conversely, suppose that $\X\cap\Mod R$ is a localizing subcategory of $\Mod R$.
Let $X\in\X$, $i\in\Z$ and $X^i\in\X$.
Then $X^i$ belongs to $\X\cap\Mod R$.
The natural injection $\ZZ^iX\hookrightarrow X^i$ and surjection $X^i\twoheadrightarrow X^i/\BB^iX$ show that $\ZZ^iX$ and $X^i/\BB^iX$ are in $\X\cap\Mod R$.
It follows that $\X$ is $0$-uniform.
\end{proof}

The following theorem is one of the main results in this section, which gives a derived category analogue of the correspondence \ref{7} using $n$-uniform subcategories.

\begin{thm}\label{8-2}
Let $n\in\NN\cup\{\infty\}$.
The assignments $\X\mapsto\supp\X$ and $\supp^{-1}_{\D(\Mod R)}(\Phi)\mapsfrom\Phi$ give a bijective correspondence between the $n$-uniform localizing subcategories of $\D(\Mod R)$ and the $n$-coherent subsets of $\Spec R$.
\end{thm}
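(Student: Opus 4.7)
The strategy is to read the statement as a restriction of the classification in \cite[Theorem 2.8]{Nee}: the assignments $\X\mapsto\supp\X$ and $\Phi\mapsto\supp_{\D(\Mod R)}^{-1}(\Phi)$ are mutually inverse bijections between arbitrary localizing subcategories of $\D(\Mod R)$ and arbitrary subsets of $\Spec R$, so all that remains is a matching of properties: given a localizing $\X$ with $\Phi=\supp\X$, one must prove that $\X$ is $n$-uniform if and only if $\Phi$ is $n$-coherent. Throughout I shall reduce to $i=0$ by shift invariance of triangulated subcategories and exploit the identification $\supp_{\D(\Mod R)}^{-1}(\Phi)\cap\Mod R=\supp_{\Mod R}^{-1}(\Phi)$.

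For the forward implication I assume $\X$ is $n$-uniform and set $\Phi:=\supp\X$; the case $n=\infty$ needs no argument, as every subset is $\infty$-coherent by Corollary \ref{infcoh}. For finite $n$ and an exact sequence $\cdots\to I_1\to I_0\to C\to 0$ with each $I_j$ injective and $\Ass I_j\subseteq\Phi$ for $0\le j\le n$, I form the bounded complex $Y$ having $Y^{-j}=I_j$ for $0\le j\le n$ and zero elsewhere, inheriting the differentials from the sequence. Because $\supp I_j=\Ass I_j\subseteq\Phi$ forces $I_j\in\X$ for $0\le j\le n$, thickness gives $Y\in\X$, and exactness of the original sequence yields $\H^kY=0$ for $k\in(-n,n)\setminus\{0\}$ together with $\H^0Y=C$. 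Applying $n$-uniformity at $i=0$ produces $C=Y^0/\BB^0Y\in\X$, so $\Ass C\subseteq\supp C\subseteq\Phi$ and Remark \ref{4}(1) shows $\Phi$ is $n$-coherent.

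For the backward implication I assume $\Phi$ is $n$-coherent, set $\X:=\supp_{\D(\Mod R)}^{-1}(\Phi)$, and note that by Proposition \ref{9}(2) for $n\in\NN$ and by Theorem \ref{7} for $n=\infty$, the subcategory $\Y:=\supp_{\Mod R}^{-1}(\Phi)=\X\cap\Mod R$ is $n$-wide. Given $X\in\X$ and data realizing the $n$-uniform hypothesis at $i=0$, I would produce $\ZZ^0X,\,X^0/\BB^0X\in\Y$ by constructing exact sequences of modules to which the $n$-kernel and $n$-cokernel closures of $\Y$ apply. Splicing the vanishing of $\H^kX$ on $(-n,n)\setminus\{0\}$ with an injective resolution $J^\bullet$ of $X^n/\BB^nX$ via the composite $X^n\twoheadrightarrow X^n/\BB^nX\hookrightarrow J^0$ gives an exact sequence
\[
0\to\ZZ^0X\to X^0\to X^1\to\cdots\to X^{n-1}\to X^n\to J^0\to J^1\to\cdots,
\]
whose first $n+1$ module terms $X^0,\dots,X^n$ lie in $\Y$; the $n$-kernel closure of $\Y$ then yields $\ZZ^0X\in\Y$. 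Dually, splicing with a projective resolution $P^\bullet$ built from a surjection $P^{-n-1}\twoheadrightarrow\ZZ^{-n}X\hookrightarrow X^{-n}$ gives an exact sequence
\[
\cdots\to P^{-n-1}\to X^{-n}\to X^{-n+1}\to\cdots\to X^0\to X^0/\BB^0X\to 0
\]
with $X^0,X^{-1},\dots,X^{-n}\in\Y$ as the first $n+1$ terms from the right; the $n$-cokernel closure of $\Y$ then yields $X^0/\BB^0X\in\Y$. For $n=\infty$ the appended tails are unnecessary, since $\H^{\ne 0}X=0$ already makes the natural sequences $0\to\ZZ^0X\to X^0\to X^1\to\cdots$ and $\cdots\to X^{-1}\to X^0\to X^0/\BB^0X\to 0$ exact throughout.

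The step I expect to require the most care is verifying exactness of the constructed sequences at the splice points $X^n$ and $X^{-n}$: one must check that $\mathrm{im}(X^{n-1}\to X^n)=\ker(X^n\to J^0)$ and, dually, $\mathrm{im}(P^{-n-1}\to X^{-n})=\ZZ^{-n}X$, so that the definitions of $n$-kernel and $n$-cokernel closure apply verbatim to the constructed sequences.
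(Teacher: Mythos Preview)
Your backward implication is essentially the paper's: both reduce to the $n$-wideness of $\supp_{\Mod R}^{-1}(\Phi)$ and then read off $\ZZ^iX$ and $X^i/\BB^iX$ from the obvious exact sequences. (Your explicit splicing with $J^\bullet$ and $P^\bullet$ is unnecessary---the definition of $n$-kernel/$n$-cokernel closure only constrains the first $n+1$ terms, so you may extend the finite sequence $0\to\ZZ^0X\to X^0\to\cdots\to X^n$ arbitrarily---but it does no harm for $n\ge1$. Note however that for $n=0$ your spliced map $X^0\to J^0$ has kernel $\BB^0X$, not $\ZZ^0X$, so the sequence is not exact; in that case simply use that $\Y$ is Serre.)

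Your forward implication has a genuine gap at $n=0$. When $n=0$ your complex $Y$ is the single module $I_0$ concentrated in degree $0$, so $\BB^0Y=0$ and $Y^0/\BB^0Y=I_0$, not $C$; the claim ``$\H^0Y=C$'' fails precisely here, and $0$-uniformity yields nothing new. The paper handles $n=0$ separately (Proposition~\ref{27}): one applies $0$-uniformity to the \emph{acyclic} complex $(0\to K\to I_0\to C\to 0)$ with $I_0$ in degree $0$, which lies in $\X$ because it is isomorphic to $0$ in $\D(\Mod R)$, and then $X^0/\BB^0X=I_0/K\cong C\in\X$ as desired. For $n\ge1$ your forward argument is correct and in fact more direct than the paper's route, which passes through Theorem~\ref{7} by showing that $\X\cap\Mod R$ is $n$-wide (verifying closure under both $n$-kernels and $n$-cokernels); you check $n$-coherence of $\Phi$ directly and need only the $X^i/\BB^iX$ half of $n$-uniformity.
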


\begin{proof}
In view of \cite[Theorem 2.8]{Nee}, it suffices to show that (a) $\supp\X$ is $n$-coherent for an $n$-uniform localizing subcategory $\X$ of $\D(\Mod R)$ and that (b) $\supp^{-1}_{\D(\Mod R)}(\Phi)$ is $n$-uniform for an $n$-coherent subset $\Phi$ of $\Spec R$.

(a) 
The case $n=0$ is settled since the $0$-uniform subcategories are the localizing subcategories of $\D(\Mod R)$ whose supports are specialization-closed by Remark \ref{27} and \cite[Theorem 3.3]{Nee}, while Corollary \ref{infcoh} shows the case $n=\infty$.
So, let $0<n<\infty$.
Put $\Phi=\supp\X$.
We have $\X=\supp_{\D(\Mod R)}^{-1}(\Phi)$ by \cite[Theorem 2.8]{Nee}, and $\X\cap\Mod R=\supp_{\Mod R}^{-1}(\Phi)$.
By \cite[Theorem 2.3]{hcls} we get $\supp(\X\cap\Mod R)=\Phi$.
In view of Theorem \ref{7}, it suffices to show that $\X\cap\Mod R$ is $n$-wide and $(\E,\oplus)$-closed.
It follows from \cite[Theorem 2.3]{hcls} again that $\X\cap\Mod R$ is $(\E,\oplus)$-closed.
As $\X$ is localizing, we easily see that $\X\cap\Mod R$ is closed under extensions.
It remains to verify that $\X\cap\Mod R$ is closed under $n$-kernels and $n$-cokernels.
Let $0\to K\to X^0\to\cdots\to X^n\to C\to0$ be an exact sequence of $R$-modules with $X^i\in\X$ for all $0\le i\le n$.
Then consider the complex $X=(\cdots\to0\to X^0\to\cdots\to X^n\to0\to\cdots)$.
For each $1\le i\le n$ there is an exact triangle $X^i[-i]\to C_i\to C_{i-1}\to X^i[-i+1]$ with $C_n=X$ and $C_0=X^0$.
We inductively see that $X$ belongs to $\X$.
Since $\H^jX=0$ for all $0\ne j\in(-n,n)$ and $n\ne j\in(0,2n)$, the modules $K=\ZZ^0X$ and $C=X^n/\BB^nX$ belong to $\X$.
Therefore, $\X\cap\Mod R$ is closed under $n$-kernels and $n$-cokernels.

(b) Put $\X=\supp^{-1}_{\D(\Mod R)}(\Phi)$.
Let $X\in\X$, $i\in\Z$, $\H^jX=0$ for all integers $j$ with $i\ne j\in(i-n,i+n)$, and $X^j \in \X$ for all integers $j \in [i-n, i+n]$.
Let $n\in\NN$ (resp. $n=\infty$).
There are two exact sequences $0\to\ZZ^iX\to X^i \to\cdots\to X^{i+n}$ and $X^{i-n}\to\cdots\to X^i\to X^i/\BB^iX\to0$ (resp. $0\to\ZZ^iX\to X^i\to X^{i+1}\to\cdots$ and $\cdots\to X^{i-1}\to X^i\to X^i/\BB^iX\to0$).
Theorem \ref{7} says that $\X\cap\Mod R=\supp^{-1}_{\Mod R}(\Phi)$ is $n$-wide.
In particular, it is closed under $n$-kernels and $n$-cokernels.
The exact sequences show that $\ZZ^iX$ and $X^i/\BB^iX$ are in $\X\cap\Mod R$, and hence they belong to $\X$.
Thus, $\X$ is $n$-uniform.
\end{proof}

Corollary \ref{infcoh}, Theorem \ref{8-2} and \cite[Theorem 2.8]{Nee} imply the following.

\begin{cor}
Every localizing subcategory of $\D(\Mod R)$ is $\infty$-uniform.
\end{cor}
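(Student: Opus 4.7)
The plan is to chain together the three cited results. Start with an arbitrary localizing subcategory $\X$ of $\D(\Mod R)$ and set $\Phi := \supp \X \subseteq \Spec R$. By Neeman's classification theorem \cite[Theorem 2.8]{Nee}, every localizing subcategory of $\D(\Mod R)$ is of the form $\supp^{-1}_{\D(\Mod R)}(\Psi)$ for some $\Psi \subseteq \Spec R$, and moreover the assignment $\X \mapsto \supp \X$ is injective on localizing subcategories; applying this to $\X$ yields the identification $\X = \supp^{-1}_{\D(\Mod R)}(\Phi)$.

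Next, invoke Corollary \ref{infcoh}, which states that every subset of $\Spec R$ is $\infty$-coherent. In particular $\Phi$ is $\infty$-coherent, so $\Phi$ lies in the right-hand side of the bijection of Theorem \ref{8-2} for $n = \infty$. Under that bijection the subset $\Phi$ corresponds to the $\infty$-uniform localizing subcategory $\supp^{-1}_{\D(\Mod R)}(\Phi)$, which equals $\X$. Hence $\X$ is $\infty$-uniform, which is the conclusion.

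There is essentially no obstacle: the content has been packaged entirely into the three ingredients. The only thing to notice is that one should cite Theorem \ref{8-2} in the direction ``$\infty$-coherent subset gives $\infty$-uniform localizing subcategory'' (the well-definedness of the inverse map), rather than in the direction of passing from subcategories to subsets.
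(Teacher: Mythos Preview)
Your proposal is correct and follows exactly the same approach as the paper, which simply states that the corollary follows from Corollary~\ref{infcoh}, Theorem~\ref{8-2} and \cite[Theorem 2.8]{Nee}; you have spelled out precisely how these three ingredients chain together.
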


Next, we consider classifying $n$-consistent subcategories of $\D(\Mod R)$.
We shall do it by showing that $n$-uniformity and $n$-consistency are equivalent for localizing subcategories.
To show that $n$-uniformity implies $n$-consistency, we use the lemma below.
This follows from the dual version of \cite[Corollary 7.14]{NY}, whose establishment is guaranteed by \cite[the three lines at the end of Section 7]{NY}.

\begin{lem}[Nakamura--Yoshino]\label{23}
Suppose $\dim R<\infty$.
Let $\Phi$ be a subset of $\Spec R$, and set $\X=\supp^{-1}_{\D(\Mod R)}(\Phi)$.
Then for every complex $X\in\X$ there exists a complex $Y\in\D(\Mod R)$ such that $Y\cong X$ and $Y^i\in\X$ for all $i\in\Z$.
\end{lem}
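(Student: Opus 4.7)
The plan is to realize $Y$ as a componentwise-in-$\X$ representative of $X$ in $\D(\Mod R)$ by building a minimal semi-injective resolution, with the finite Krull dimension hypothesis ensuring the construction extends to unbounded $X$.

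As a first step I would dispose of the bounded-below case: suppose $\H^{\ll 0}(X)=0$ and take $Y:=\E(X)$ as in Definition \ref{2.4}(2). By Remark \ref{rem}(3), each term decomposes as
$$
Y^i \;=\; \bigoplus_{\p \in \Spec R} \e_R(R/\p)^{\oplus \mu_i(\p, X)}, \qquad \mu_i(\p, X) = \dim_{\kappa(\p)}\Ext^i_{R_\p}(\kappa(\p), X_\p).
$$
Nonvanishing of $\mu_i(\p, X)$ forces $\kappa(\p) \ltensor_R X \ne 0$, hence $\p \in \supp X \subseteq \Phi$. Combined with the elementary computation $\supp \e_R(R/\p) = \{\p\}$ and the compatibility of the small support with direct sums, this yields $\supp Y^i \subseteq \supp X \subseteq \Phi$, so $Y^i \in \X$ for every $i$. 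Notice that this step does not use $\dim R < \infty$.

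For a general unbounded $X$ I would patch bounded-below minimal resolutions via a homotopy limit. For each $n \in \NN$ let $X_n := \tau^{\ge -n} X$ denote the soft truncation, so $X_n$ is bounded below and $\supp X_n \subseteq \supp X \subseteq \Phi$. Applying the previous step produces minimal injective resolutions $Y_n := \E(X_n)$ with every $Y_n^i \in \X$, and the canonical tower $\cdots \to X_{n+1} \to X_n \to \cdots$ lifts compatibly to $\cdots \to Y_{n+1} \to Y_n \to \cdots$. I would then put $Y := \holim_n Y_n$; a standard argument identifies $Y$ with $X$ in $\D(\Mod R)$. Finite Krull dimension $d := \dim R$ enters here to guarantee stabilization: for each fixed $i$, the Bass number $\mu_i(\p, X_n)$ becomes independent of $n$ once $n$ is large compared with $d$, by a Grothendieck-type vanishing that bounds how far the lower-degree tail of $X$ contributes to $\Ext^i_{R_\p}(\kappa(\p), (X_n)_\p)$. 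Consequently $Y^i$ is itself a direct sum of copies of $\e_R(R/\p)$ over $\p \in \supp X \subseteq \Phi$, and $Y^i \in \X$.

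The main obstacle is making this stabilization-and-patching rigorous in the unbounded setting; equivalently, proving that a minimal semi-injective resolution of an unbounded complex exists in the homotopy category of injectives and has the expected componentwise small support. This is exactly what the finite Krull dimension hypothesis buys us, and it is the technical core of the Nakamura--Yoshino framework. For the detailed verification I would invoke the dual of \cite[Corollary 7.14]{NY} (justified by the final three lines of \cite[\S7]{NY}, as noted in the statement of the lemma) rather than redevelop the machinery from scratch.
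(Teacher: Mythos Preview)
Your bounded-below argument is correct and cleaner than anything in the paper: by Remark~\ref{rem}(1) one has $\supp X=\bigcup_i\Ass\E^i(X)$ when $\H^{\ll0}X=0$, so $\supp\E^i(X)=\Ass\E^i(X)\subseteq\Phi$ immediately, and no finiteness of $\dim R$ is needed. The paper does not isolate this case.

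The unbounded sketch, however, has a genuine gap. The assertion $\supp(\tau^{\ge -n}X)\subseteq\supp X$ is unjustified, and attempting to justify it is circular: closure of $\X=\supp^{-1}_{\D(\Mod R)}(\Phi)$ under soft truncation amounts to $n$-consistency of $\X$, and Lemma~\ref{23} is precisely the input the paper uses (in Proposition~\ref{8-4}) to \emph{prove} that consistency. Separately, $Y:=\holim_nY_n$ is only an object of $\D(\Mod R)$, not a specific complex, so the phrase ``$Y^i$ is itself a direct sum of copies of $\e_R(R/\p)$'' has no content until you exhibit a concrete model; stabilization of Bass numbers, even if established, does not by itself produce one.

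The paper's construction is quite different and avoids truncations entirely. Following Nakamura--Yoshino, one fixes a system of slices $\mathbb W=\{W_i\}$ of $\Phi$ (e.g.\ $W_i=\{\p\in\Phi:\dim R/\p=i\}$), forms the bounded complex of functors $R_{\mathbb W}$ built from $\underline{\gamma}_{W_i}=\bigoplus_{\p\in W_i}\Gamma_{\V(\p)}\Hom_R(R_\p,-)$, and sets $Y:=\mathrm{Tot}(R_{\mathbb W}(I))$ for a semi-injective resolution $I$ of $X$. This is a \emph{concrete} complex whose terms are visibly direct sums of modules $\Gamma_{\V(\p)}\Hom_R(R_\p,I^j)$ with $\p\in\Phi$, hence lie in $\X$ by construction; the quasi-isomorphism $Y\cong\gamma_\Phi(X)\cong X$ is verified inductively on the slices. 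Here $\dim R<\infty$ enters only to make $R_{\mathbb W}$ bounded (so the totalization is a finite sum in each degree), not through any Bass-number stabilization. Since you end by citing \cite{NY}, you are pointing at the right source---but your holim outline is not what that source does, and its intermediate steps would not survive on their own.
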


\begin{proof}[Sketch of Proof]
One can verify this lemma inductively via \cite[Definition 7.6]{NY} by combining \cite[Theorem 3.12]{NY0}, \cite[Lemma 9.1(2)]{NY}, \cite[Lemma 3.5.1]{Lip} and \cite[Theorem 3.22]{NY} (or \cite[Theorem 3.13]{NY0}), see also \cite[Corollary 3.4]{NY0}.

Concretely, we can construct such a complex $Y$ as follows.
Let $\mathbb{W} = \{W_i\}_{i=1}^n$ be a {\it system of slices} of $\Phi$ in the sense of  \cite[Definition 7.6]{NY} (for example, we can take $W_i := \{\p \in \Phi \mid \dim R/\p =i\}$).
We then replace the functor $\overline{\lambda}^{(i_m, \ldots, i_0)}$ and the complex of functors $L^{\mathbb{W}}$ in \cite{NY} with their categorical duals
$$
\underline{\gamma}_{(i_m, \ldots, i_0)} := \underline{\gamma}_{W_{i_m}} \cdots \underline{\gamma}_{W_{i_0}},\qquad\underline{\gamma}_{W_i}:= \bigoplus_{\p \in W_i} \Gamma_{\V(\p)} \Hom_R(R_\p, - )\\
$$
and 
$$
R_{\mathbb{W}} := \left( 0 \to \underline{\gamma}_{(d, \ldots, 0)} \to \bigoplus_{i_0 < \cdots < i_{d-1}} \underline{\gamma}_{(i_{d-1}, \ldots, i_0)} \to \cdots \to \bigoplus_{i_0 < i_1} \underline{\gamma}_{(i_{1}, i_0)} \to \cdots \to \bigoplus_{i} \underline{\gamma}_{(i)} \to 0 \right)
$$
respectively. 
Here the differentials of $R_{\mathbb{W}}$ are induced from the canonical natural transformations $\underline{\gamma}_{W_i} \to \mathrm{id}_{\Mod R}$. 
This complex can be constructed inductively as follows.
Let $\mathbb{W}_r := \{W_i\}_{i=0}^r$ and denote by $\widetilde{R_{\mathbb{W}_r}}$ the augmentation of $R_{\mathbb{W}_r}$.
Then, we see that $\widetilde{R_{\mathbb{W}_{r+1}}}$ is the mapping cone of $\underline{\gamma}_{W_{r+1}} \widetilde{R_{\mathbb{W}_{r}}} \to \widetilde{R_{\mathbb{W}_{r}}}$ and hence its de-augmentation is $R_{\mathbb{W}_{r+1}}$.
Therefore, by induction, one can show that there is a natural isomorphism $\gamma_\Phi (M) \cong R_{\mathbb{W}} (M)$ in $\D(\Mod R)$ for any injective $R$-module $M$.
Here, the induction basis follows from \cite[Theorem 3.12]{NY0}.
For a complex $X \in \D(\Mod R)$ with a semi-injective resolution $I$, we get isomorphisms $X \cong \gamma_{\Phi}(I) \cong \mathrm{Tot}(R_{\mathbb{W}}(I))=:Y$ in $\D(\Mod R)$ such that each component of $Y$ belongs to $\supp_{\D(\Mod R)}^{-1}(\Phi)$, see \cite[Lemma 3.5.1]{Lip} and \cite[Theorem 3.22]{NY}.
\end{proof}

\begin{prop}\label{8-4}
Let $n \in \NN \cup \{\infty\}$.
\begin{enumerate}[\rm(1)]
\item
Assume $n\ne0$.
For an $n$-consistent localizing subcategory $\X$ of $\D(\Mod R)$, the subset $\supp \X$ of $\Spec R$ is $n$-coherent.
\item
For an $n$-coherent subset $\Phi$ of $\Spec R$, the subcategory $\supp^{-1}_{\D(\Mod R)}(\Phi)$ of $\D(\Mod R)$ is $n$-consistent. 
\end{enumerate}
\end{prop}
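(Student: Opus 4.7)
The plan is to use Theorem \ref{7} as the bridge: in both directions, the work is transferred to the abelian subcategory $\X\cap\Mod R=\supp_{\Mod R}^{-1}(\Phi)$ and combined with classical machinery on the derived side.

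For (1), Neeman's classification \cite[Theorem 2.8]{Nee} writes $\X=\supp_{\D(\Mod R)}^{-1}(\Phi)$ for $\Phi=\supp\X$, so that $\X\cap\Mod R=\supp_{\Mod R}^{-1}(\Phi)$ and $\supp(\X\cap\Mod R)=\Phi$, while \cite[Theorem 2.3]{hcls} makes $\X\cap\Mod R$ moreover $(\E,\oplus)$-closed. By Theorem \ref{7} it suffices to show $\X\cap\Mod R$ is $n$-wide; the case $n=\infty$ is Corollary \ref{infcoh}, so assume $1\le n<\infty$. Closure under extensions, direct sums, and direct summands is automatic. To verify closure under $n$-kernels, take an exact sequence $0\to M\to X^0\to X^1\to\cdots$ with $X^0,\ldots,X^n\in\X\cap\Mod R$ and form the bounded complex $Y=(0\to X^0\to\cdots\to X^n\to0)$ placed in degrees $[0,n]$. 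A routine induction on length using brutal-truncation triangles $\sigma_{>a}Y\to Y\to X^a[-a]\to$ and thickness of $\X$ shows $Y\in\X$; the given exactness forces $\H^0Y=M$ and $\H^jY=0$ for $0<j<n$, while $\H^nY$ is unconstrained. Since $n\notin(-n,n)$, the $n$-consistency of $\X$ at $i=0$ yields $M=\H^0Y\in\X$. Closure under $n$-cokernels is shown symmetrically using the complex placed in degrees $[-n,0]$.

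For (2), when $n=\infty$ the hypothesis forces $X\cong\H^iX[-i]$ in $\D(\Mod R)$, so thickness of $\X$ (Remark \ref{28}(4)) gives $\H^iX\in\X$. For finite $n\ge1$, I first reduce to $R$ local. The argument in the proof of Proposition \ref{222} shows $\supp_RZ\subseteq\Phi$ if and only if $\supp_{R_\p}Z_\p\subseteq\Phi_\p$ for every $\p$, and this extends verbatim from modules to complexes; combined with Proposition \ref{2}(2) this permits work over $R_\p$. So assume $R$ is local, hence $\dim R<\infty$, and after shifting take $i=0$. By Lemma \ref{23}, $X$ is isomorphic in $\D(\Mod R)$ to a complex $Y$ with $Y^j\in\X\cap\Mod R$ for every $j\in\Z$. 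By Theorem \ref{8-2} the subcategory $\X$ is $n$-uniform; applying that property at $i=0$ (whose hypotheses are met) yields $\ZZ^0Y\in\X\cap\Mod R$. Now assemble the sequence
$$
0\to\ZZ^{-n}Y\to Y^{-n}\to Y^{-n+1}\to\cdots\to Y^{-1}\to\ZZ^0Y\to\H^0Y\to0,
$$
whose exactness at each $Y^{-j}$ for $0<j<n$ is the vanishing $\H^{-j}Y=0$, at $Y^{-n}$ is tautological (the incoming arrow is precisely the inclusion $\ZZ^{-n}Y\hookrightarrow Y^{-n}$ of the kernel of $Y^{-n}\to Y^{-n+1}$), at $\ZZ^0Y$ comes from $\BB^0Y=\Im(Y^{-1}\to\ZZ^0Y)$, and at the endpoints is evident. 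Its first $n+1$ terms $\ZZ^0Y,Y^{-1},\ldots,Y^{-n}$ all lie in $\X\cap\Mod R$, which is $n$-wide by Theorem \ref{7}; its closure under $n$-cokernels then forces $\H^0Y\in\X\cap\Mod R$.

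The principal technical hurdle is in (2): the exact sequence has to be engineered so that its leftmost explicit module $Y^{-n}$ is followed by the auxiliary term $\ZZ^{-n}Y$ (which is not a priori in $\X$) precisely so that exactness at $Y^{-n}$ becomes automatic, thereby bypassing the awkward fact that $\H^{-n}Y$ need not vanish. Coordinating this with the local reduction, Lemma \ref{23}, and the $n$-uniformity furnished by Theorem \ref{8-2} is the central technical input of the proof.
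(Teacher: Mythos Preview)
Your argument for (1) is identical to the paper's (which simply refers back to part~(a) of the proof of Theorem~\ref{8-2}, substituting $n$-consistency for $n$-uniformity at the final step). For (2) you follow the same skeleton: reduce to the finite-dimensional case via Proposition~\ref{2}(2), apply Lemma~\ref{23}, invoke $n$-uniformity from Theorem~\ref{8-2}, then use $n$-wideness of $\X\cap\Mod R$ from Theorem~\ref{7}. The only variation is that the paper extracts $X^i/\BB^iX\in\X$ and runs the exact sequence $0\to\H^iX\to X^i/\BB^iX\to X^{i+1}\to\cdots\to X^{i+n}$ to the \emph{right}, using closure under $n$-kernels, whereas you extract $\ZZ^0Y\in\X$ and run the dual sequence to the \emph{left}, using closure under $n$-cokernels. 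Both work equally well.

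One gap: your case split in (2) covers $n=\infty$ and finite $n\ge1$ but omits $n=0$, which the statement does include. This is easily patched --- since $0$-coherent implies $1$-coherent (Remark~\ref{4}(2)) and $0$-consistent coincides with $1$-consistent (Remark~\ref{28}(2)), the $n=0$ case follows from your $n=1$ case --- but it should be said. The paper's uniform treatment sidesteps this because its exact sequence and $n$-kernel argument degenerate correctly when $n=0$ (the sequence becomes $0\to\H^iX\to X^i/\BB^iX$, and closure under $0$-kernels is closure under submodules).
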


\begin{proof}
(1) The assertion is shown by the same arguments as in (a) in the proof of Theorem \ref{8-2} for $n\ne0$, where using $n$-uniformity at the end is just replaced with using $n$-consistency.

(2) Set $\X= \supp^{-1}_{\D(\Mod R)}(\Phi)$.
Let $X \in \X$ and $i \in \Z$ be such that $\H^jX = 0$ for all $i \neq j \in (i-n, i+n)$.
We prove $\H^iX \in \X$ in two steps: (i) the case where $\dim R < \infty$ and (ii) the general case.

(i) By Lemma \ref{23}, we may assume $X^j \in \X$ for all $j \in \Z$.
Since $\X$ is $n$-uniform by Theorem \ref{8-2}, we have $X^i/\BB^iX \in \X$.
Note by Theorem \ref{7} that $\X\cap\Mod R=\supp^{-1}_{\Mod R}(\Phi)$ is $n$-wide, and in particular, it is closed under $n$-kernels.
From the exact sequence $0\to\H^iX\to X^i/\BB^iX \to X^{i+1}\to\cdots\to X^{i+n}$, we obtain $\H^iX\in\X\cap\Mod R\subseteq\X$.

(ii) Fix a prime ideal $\p$ of $R$.
Since $R_\p$ has finite Krull dimension, it follows from (i) and Proposition \ref{2}(2) that $\Y:=\supp_{\D(\Mod R_\p)}^{-1}(\Phi_\p)$ is $n$-consistent.
As $\kappa(\q R_\p)\ltensor_{R_\p}X_\p\cong\kappa(\q)\ltensor_RX$ for $\q\in\Spec R$ with $\q\subseteq\p$, we observe that $X_\p$ belongs to $\Y$.
Also, $\H^j(X_\p)=(\H^jX)_\p=0$ for all integers $j$ with $i\ne j\in(i-n,i+n)$.
Hence $\H^i(X_\p)\in\Y$, and therefore $\supp_{R_\p}(\H^iX)_\p\subseteq\Phi_\p$ for all $\p\in\Spec R$.
It is seen that $\H^iX\in\X$.
\end{proof}

As we have promised, we give the following classification theorem of $n$-consistent localizing subcategories.

\begin{thm}\label{24}
Let $0<n\in\NN\cup\{\infty\}$.
Then a localizing subcategory of $\D(\Mod R)$ is $n$-uniform if and only if it is $n$-consistent.
In particular, the assignments $\X\mapsto\supp\X$ and $\supp^{-1}_{\D(\Mod R)}(\Phi)\mapsfrom\Phi$ give a bijective correspondence between the $n$-consistent localizing subcategories of $\D(\Mod R)$ and the $n$-coherent subsets of $\Spec R$.
\end{thm}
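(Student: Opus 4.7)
The plan is to deduce Theorem \ref{24} as a formal consequence of the preceding results. The heavy lifting has already been done in Theorem \ref{8-2} (classifying $n$-uniform localizing subcategories by $n$-coherent subsets) and in Proposition \ref{8-4} (tying $n$-consistency to $n$-coherence of the support); Neeman's theorem \cite[Theorem 2.8]{Nee} serves as the pivot, since it tells us that every localizing subcategory $\X$ of $\D(\Mod R)$ is recovered from its support as $\X=\supp^{-1}_{\D(\Mod R)}(\supp\X)$.

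First I would handle the forward implication. Let $\X$ be an $n$-uniform localizing subcategory. Theorem \ref{8-2} asserts that $\supp\X$ is $n$-coherent, and Proposition \ref{8-4}(2) then asserts that $\supp^{-1}_{\D(\Mod R)}(\supp\X)$ is $n$-consistent. By Neeman this subcategory coincides with $\X$, so $\X$ is $n$-consistent.

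For the converse, let $\X$ be an $n$-consistent localizing subcategory; this is where the hypothesis $n>0$ is used. By Proposition \ref{8-4}(1), valid precisely because $n\ne0$, $\supp\X$ is $n$-coherent. Theorem \ref{8-2} then yields that $\X=\supp^{-1}_{\D(\Mod R)}(\supp\X)$ is $n$-uniform. This completes the equivalence, and the bijective correspondence is an immediate restatement of Theorem \ref{8-2} under this equivalence.

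The main obstacle in the overall argument lies not in this short assembly but already in Proposition \ref{8-4}(2), where one must extract cohomology objects from a complex whose components may not a priori lie in the subcategory; this is overcome using Lemma \ref{23} (Nakamura--Yoshino) together with a localization argument to remove the finite-dimensionality assumption. The exclusion of $n=0$ is also essential and unavoidable here, since by Remark \ref{28}(2) and Proposition \ref{27} a $0$-consistent localizing subcategory is merely one closed under cohomologies, which is strictly weaker than the smashing condition characterizing $0$-uniform localizing subcategories.
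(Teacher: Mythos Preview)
Your proof is correct and follows essentially the same approach as the paper's: both directions are obtained by combining Proposition \ref{8-4} with Theorem \ref{8-2} and Neeman's identification $\X=\supp^{-1}_{\D(\Mod R)}(\supp\X)$, and the bijection is then read off from Theorem \ref{8-2}. Your additional remarks on where the real work lies (Lemma \ref{23}) and on why $n=0$ must be excluded are accurate and match the paper's surrounding discussion.
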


\begin{proof}
We begin with showing the first assertion.
According to Theorem \ref{8-2} and Proposition \ref{8-4}(2), it is enough to prove that every $n$-consistent subcategory $\X$ of $\D(\Mod R)$ is $n$-uniform.
Proposition \ref{8-4}(1) implies that $\supp \X$ is $n$-coherent, and hence $\supp_{\D(\Mod R)}^{-1}(\supp \X)$ is $n$-uniform by Theorem \ref{8-2}.
We have $\X=\supp_{\D(\Mod R)}^{-1}(\supp \X)$ by \cite[Theorem 2.8]{Nee}.
It follows that $\X$ is $n$-uniform, as desired.
The second assertion follows from the first and Theorem \ref{8-2}.
\end{proof}

Letting $n=0$ in Theorem \ref{8-2} and $n=1, \infty$ in Theorem \ref{24} respectively and using Proposition \ref{27} and Remark \ref{28}(2), we have the following corollary.
These bijections are given in \cite[Theorem 3.3]{Nee} and \cite[Theorem 5.2]{Kra08} (see also \cite[Main Theorem]{hcls}).

\begin{cor}\label{25}
The assignments $\X\mapsto\supp\X$ and $\supp^{-1}_{\D(\Mod R)}(\Phi)\mapsfrom\Phi$ give one-to-one correspondences:

\begin{itemize}
\item
$\{\text{localizing subcategories of $\D(\Mod R)$}\}\leftrightarrow\{\text{subsets of $\Spec R$}\}$,

\item
$\{\text{cohomology-closed localizing subcategories of $\D(\Mod R)$}\}\leftrightarrow\{\text{coherent subsets of $\Spec R$}\}$.

\item
$\{\text{smashing subcategories of $\D(\Mod R)$}\}\leftrightarrow\{\text{specialization-closed subsets of $\Spec R$}\}$,
\end{itemize}
\end{cor}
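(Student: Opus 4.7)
The plan is to derive the three bijections by specializing Theorems \ref{8-2} and \ref{24} at the values $n=0$, $1$, $\infty$ and then translating the resulting classes on each side into the standard terminology. In every case the assignments $\X \mapsto \supp\X$ and $\Phi \mapsto \supp^{-1}_{\D(\Mod R)}(\Phi)$ are the ones already provided by those theorems, so the work consists solely of identifying the domains and codomains of the restricted bijections.

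For the bijection between smashing subcategories of $\D(\Mod R)$ and specialization-closed subsets of $\Spec R$, I will set $n=0$ in Theorem \ref{8-2}, which yields a correspondence between $0$-uniform localizing subcategories and $0$-coherent subsets. Proposition \ref{27} identifies the former with the smashing subcategories, and Proposition \ref{3} identifies the latter with the specialization-closed subsets, which finishes this case.

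For the middle bijection, I will apply Theorem \ref{24} with $n=1$, getting a correspondence between $1$-consistent localizing subcategories and $1$-coherent subsets. By Remark \ref{28}(2), $1$-consistency is exactly the property of being closed under cohomologies, and by Remark \ref{4}(3), $1$-coherent subsets are precisely the coherent subsets in the sense of \cite{Kra08}, so the translation is immediate.

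For the first and most general bijection, I will set $n=\infty$ in Theorem \ref{24}. On the categorical side, Remark \ref{28}(4) shows that every thick subcategory (and hence every localizing subcategory) of $\D(\Mod R)$ is $\infty$-consistent, so the class of $\infty$-consistent localizing subcategories coincides with the class of all localizing subcategories. On the spectral side, Corollary \ref{infcoh} asserts that every subset of $\Spec R$ is $\infty$-coherent. I do not expect any real obstacle in carrying out this plan: the entire weight of the argument has been packed into Theorems \ref{8-2} and \ref{24}, and this corollary is simply a matter of recognizing three familiar specializations.
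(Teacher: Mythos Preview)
Your proposal is correct and follows essentially the same route as the paper: the paper derives the three bijections by setting $n=0$ in Theorem \ref{8-2} and $n=1,\infty$ in Theorem \ref{24}, invoking Proposition \ref{27} and Remark \ref{28}(2) for the translations, and your version simply makes the remaining identifications (Proposition \ref{3}, Remark \ref{4}(3), Remark \ref{28}(4), Corollary \ref{infcoh}) explicit.
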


\begin{rem}
It is clarified by \cite[Remark 2.3]{CI} that \cite[Proposition 5.1]{Kra08} does not necessarily hold true, namely, for an unbounded complex $X$ with minimal semi-injective resolution $I$, the equality
$$
\supp X = \bigcup_{i \in \Z} \Ass I^i
$$ 
may fail\footnote{This does not affect \cite[Lemma 3.3]{Kra08}, \cite[Lemma 3.5]{Kra08} or \cite[Theorem 2.3]{hcls}}.
However, Corollary \ref{25} guarantees that the assertion of \cite[Theorem 5.2]{Kra08} is correct.
Indeed, as a direct consequence of Corollary \ref{25}, for a subset $\Phi$ of $\Spec R$, $\supp^{-1}_{\D(\Mod R)} (\Phi)$ is closed under taking cohomologies if and only if $\Phi$ is coherent.
Here we give a proof of \cite[Proposition 2.3(2)]{hcls}, i.e., the equality
$$
\supp^{-1}_{\D(\Mod R)}(\supp \M) = \overline{\M} := \{X \in \D(\Mod R) \mid \H^i(X) \in \M \,\,\, (\forall i \in \Z) \} 
$$
for a $\oplus$-closed wide subcategory $\M$ of $\Mod R$ without using \cite[Proposition 5.1]{Kra08}.

Fix such a subcategory $\M$ of $\Mod R$.
We note that the proof of the inclusion $\overline{\M} \subseteq \supp_{\D(\Mod R)}^{-1}(\supp\M)$ in \cite[Proposition 2.3(2)]{hcls} just uses \cite[Theorem 5.2]{Kra08}.
By contrast, the converse inclusion uses \cite[Proposition 5.1]{Kra08}.
Thus, we need to check the inclusion $\overline{\M} \supseteq \supp_{\D(\Mod R)}^{-1}(\supp\M)$.
Let $\widetilde\M$ be the smallest localizing subcategory of $\D(\Mod R)$ containing $\M$.
Then we have
$$
\supp_{\D(\Mod R)}^{-1}(\supp\M)
\subseteq\supp_{\D(\Mod R)}^{-1}(\supp\widetilde\M)
=\widetilde\M \subseteq \overline{\M}.
$$
The first inclusion is trivial since $\widetilde{\M}$ contains $\M$ and the equality follows from \cite[Theorem 2.8]{Nee}.
It is easy to see that $\overline\M$ is localizing, and contains $\M$.
The last inclusion follows from this.
Therefore, we obtain the inclusion $\overline{\M} \supseteq \supp_{\D(\Mod R)}^{-1}(\supp\M)$.
Thus the proof of \cite[Proposition 2.3(2)]{hcls} is completed.

\if0
\old{
It is clarified by \cite[Remark 2.3]{CI} that \cite[Proposition 5.1]{Kra08} does not necessarily hold true\footnote{This does not affect \cite[Lemmas 3.3 and 3.5]{Kra08} and \cite[Theorem 2.3]{hcls}.}.
The proofs of \cite[Theorem 5.2]{Kra08} and \cite[Proposition 2.3(2)]{hcls} use \cite[Proposition 5.1]{Kra08}.
Corollary \ref{25} guarantees that the assertion of \cite[Theorem 5.2]{Kra08} is correct\footnote{It is asserted in \cite[Theorem 5.2]{Kra08} that for a coherent subset $\Phi$ of $\Spec R$ and an $R$-complex $X$ one has $X\in\supp^{-1}_{\D(\Mod R)}(\Phi)$ if and only if $\H^iX\in\supp^{-1}_{\D(\Mod R)}(\Phi)$ for all $i\in\Z$. Corollary \ref{25} guarantees the correctness of the ``only if'' part, while the proof of the ``if part'' in \cite[Theorem 5.2]{Kra08} remains valid.
By the way, before the project of the present paper started, Tsutomu Nakamura had told the second-named author (Takahashi) that the assertion of \cite[Theorem 5.2]{Kra08} is correct.}.
Here we give a proof of \cite[Proposition 2.3(2)]{hcls} without using \cite[Proposition 5.1]{Kra08}.

Fix a subcategory $\M$ of $\Mod R$.
Let $\widetilde\M$ be the smallest localizing subcategory of $\D(\Mod R)$ containing $\M$, and $\overline\M$ the subcategory of $\D(\Mod R)$ consisting of complexes all of whose cohomologies belong to $\M$.
Suppose that $\M$ is wide and $\oplus$-closed.
Then
$$
\widetilde\M
\subseteq\overline\M
\subseteq\supp_{\D(\Mod R)}^{-1}(\supp\M)
\subseteq\supp_{\D(\Mod R)}^{-1}(\supp\widetilde\M)
=\widetilde\M.
$$
It is easy to see that $\overline\M$ is localizing, and contains $\M$.
The first inclusion follows from this.
The second inclusion is shown by the final paragraph of the proof of \cite[Proposition 2.3(2)]{hcls}, and the equality follows from \cite[Theorem 2.8]{Nee}.
Therefore, we obtain $\widetilde\M=\overline\M$.
By \cite[Propositions 2.2, 2.3(1) and Remark following Definition 2.6]{hcls}, we have $\supp^{-1}_{\D(\Mod R)}(\supp\M)=\widetilde\M$.
Thus the proof of \cite[Proposition 2.3(2)]{hcls} is completed.}
\fi
\end{rem}

\section{Several classifications of subcategories by restriction}

This section concerns some restrictions of classifications obtained in the previous two sections.
First of all, we consider restriction to $\Pi$-closed subcategories of $\Mod R$ and $\D(\Mod R)$.

\begin{prop}\label{15}
The following are equivalent for a subset $\Phi$ of $\Spec R$.
\begin{enumerate}[\rm(1)]
\item
The subset $\Phi$ is generalization-closed.
\item
The subcategory $\supp_{\Mod R}^{-1} (\Phi)$ is closed under direct products.
\item
The subcategory $\supp_{\Mod R}^{-1} (\Phi)$ is closed under countable direct products.
\item
The subcategory $\supp_{\D(\Mod R)}^{-1} (\Phi)$ is closed under direct products.
\item
The subcategory $\supp_{\D(\Mod R)}^{-1} (\Phi)$ is closed under countable direct products.
\end{enumerate}
\end{prop}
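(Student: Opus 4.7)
Plan: I will close the cycle $(1)\Rightarrow(2)\Rightarrow(4)\Rightarrow(5)\Rightarrow(1)$ together with $(2)\Rightarrow(3)\Rightarrow(1)$. The implications $(2)\Rightarrow(3)$ and $(4)\Rightarrow(5)$ are trivial restrictions, and $(4)\Rightarrow(2)$ and $(5)\Rightarrow(3)$ are immediate from the observation that the $\D(\Mod R)$-product of a family of modules placed in degree zero coincides degreewise with their $\Mod R$-product (since products are exact in $\Mod R$), and the small support of a module agrees with its small support viewed as a stalk complex.

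The key technical lemma I will prove is: for any family $\{N_\lambda\}_{\lambda\in\Lambda}$ of $R$-modules, every $\p\in\Ass\prod_\lambda N_\lambda$ is a generalization of some prime in $\bigcup_\lambda\Ass N_\lambda$. Indeed, writing $\p=\ann n$ for $n=(n_\lambda)$, each annihilator $\ann n_\lambda\supseteq\p$, so any minimal prime $\q_\lambda$ of $\ann n_\lambda$ lies in $\Ass(R/\ann n_\lambda)\subseteq\Ass N_\lambda$ and contains $\p$. Granted this, $(1)\Rightarrow(2)$ proceeds as follows: for $M_\lambda\in\supp^{-1}_{\Mod R}(\Phi)$, the exactness of products in $\Mod R$ together with the fact that products of injectives are injective shows that $\prod_\lambda\E(M_\lambda)$ is an injective resolution of $\prod_\lambda M_\lambda$; Remark \ref{rem}(1) then gives $\supp\prod_\lambda M_\lambda\subseteq\bigcup_i\Ass\prod_\lambda\E^i(M_\lambda)$. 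Since $\Ass\E^i(M_\lambda)\subseteq\supp M_\lambda\subseteq\Phi$ by Remark \ref{rem}, the lemma combined with the generalization-closedness of $\Phi$ forces $\Ass\prod_\lambda\E^i(M_\lambda)\subseteq\Phi$ for each $i$, so $\supp\prod_\lambda M_\lambda\subseteq\Phi$.

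For $(2)\Rightarrow(4)$ I invoke the identity $\supp X=\bigcup_{i\in\Z}\supp\H^i X$ for $X\in\D(\Mod R)$---a standard property of small support---together with the commutation $\H^i\prod_\lambda X_\lambda=\prod_\lambda\H^i X_\lambda$ (again by exactness of products in $\Mod R$): for $X_\lambda\in\supp^{-1}_{\D(\Mod R)}(\Phi)$ each cohomology satisfies $\supp\H^i X_\lambda\subseteq\supp X_\lambda\subseteq\Phi$, so by (2) we have $\prod_\lambda\H^i X_\lambda=\H^i\prod_\lambda X_\lambda\in\supp^{-1}_{\Mod R}(\Phi)$, and thus $\supp\prod_\lambda X_\lambda\subseteq\Phi$.

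Finally, $(3)\Rightarrow(1)$ and $(5)\Rightarrow(1)$ go by contraposition. If $\Phi$ is not generalization-closed, pick $\q\subsetneq\p$ with $\p\in\Phi$ and $\q\notin\Phi$. A localization argument at $\p$ in the spirit of Proposition \ref{2}(2) reduces to the case where $(R,\p)$ is local; set $M_n:=R/\p^n$, so $\supp M_n=\{\p\}\subseteq\Phi$. The crux is to show $\q\in\supp\prod_n M_n$ by proving that the element $(1,1,1,\ldots)\in\prod_n R/\p^n$ is nonzero in $\kappa(\q)\otimes_R\prod_n R/\p^n$: if instead $s\cdot(1,1,\ldots)\in\q\prod_n R/\p^n$ for some $s\in R\setminus\q$, then $s\in\q+\p^n$ for every $n$, whereas Krull's intersection theorem in the noetherian local domain $R/\q$ gives $\bigcap_n(\q+\p^n)=\q$, forcing the contradiction $s\in\q$. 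Hence $\prod_n M_n$ refutes (3), and viewing each $M_n$ as a stalk complex in degree zero refutes (5) as well. The main obstacle will be cleanly organizing the $\kappa(\q)\otimes_R(-)$/localization/product interactions in this final Krull-intersection argument.
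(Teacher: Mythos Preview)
Your argument for $(2)\Rightarrow(4)$ contains a genuine gap. You invoke the identity $\supp X=\bigcup_{i\in\Z}\supp\H^i X$ and, in particular, the inclusion $\supp\H^i X\subseteq\supp X$, calling it ``a standard property of small support''. This inclusion is \emph{false} in general: if it held for every complex, then $\supp^{-1}_{\D(\Mod R)}(\Phi)$ would be closed under taking cohomology for \emph{every} subset $\Phi$, whereas Corollary~\ref{25} says this happens exactly when $\Phi$ is coherent. A generalization-closed subset need not be coherent---for instance $(\Max R)^\complement$ is generalization-closed but, by Corollary~\ref{10}, is not coherent as soon as $\dim R\ge 2$---so even granting (1) you cannot deduce $\supp\H^i X_\lambda\subseteq\Phi$ from $\supp X_\lambda\subseteq\Phi$. (The reverse inclusion $\supp X\subseteq\bigcup_i\supp\H^i X$ \emph{does} hold, since $X$ lies in the localizing subcategory generated by its cohomology modules; unfortunately it is the other direction your argument uses.) Since $(2)\Rightarrow(4)$ is your only route into condition~(4), the cycle does not close.

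The paper does not attempt an elementary reduction of $(1)\Rightarrow(4)$ to the module case; it cites \cite[Lemma~4.5]{Nee11}. One conceptual way to see why product-closure holds is to identify $\supp^{-1}_{\D(\Mod R)}(\Phi)$, for $\Phi$ generalization-closed, with the right orthogonal $\bigl(\supp^{-1}_{\D(\Mod R)}(\Phi^\complement)\bigr)^\perp$, and right orthogonals are automatically closed under products; but establishing that identification is essentially the content of Neeman's lemma and is not available from your setup. Your treatments of $(1)\Rightarrow(2)$ and of $(3)\Rightarrow(1)$ are correct and close in spirit to the paper's: both use the annihilator description of associated primes for the former and Krull's intersection theorem for the latter, though the paper works with injective hulls and associated primes rather than computing $\kappa(\q)\otimes_R(-)$ directly.
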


\begin{proof}
We prove that (1) implies (2).
Let $\{M_\lambda\}_{\lambda \in \Lambda}$ be a family of modules in $\supp_{\Mod R}^{-1} (\Phi)$.
The product $\Pi_{\lambda \in \Lambda} \E(M_\lambda)$ of complexes is an injective resolution of the module $\Pi_{\lambda \in \Lambda} M_\lambda$.
We may assume that each $M_\lambda$ is injective to prove $\Ass(\Pi_{\lambda \in \Lambda} M_\lambda)\subseteq \Phi$.
Take $\p \in \Ass(\Pi_{\lambda \in \Lambda} M_\lambda)$.
Then $\p = \ann (m_\lambda)_{\lambda \in \Lambda}$ for some $0 \neq (m_\lambda)_{\lambda \in \Lambda} \in \Pi_{\lambda \in \Lambda} M_\lambda$.
There is $\lambda\in\Lambda$ with $m_\lambda\ne0$, and there exists $\q\in\Ass M_\lambda$ such that $\ann m_\lambda \subseteq \q$ (see \cite[Theorem 6.1]{M}).
Note that $\p\subseteq\ann m_\lambda$ and $\Ass M_\lambda\subseteq\Phi$.
As $\Phi$ is generalization-closed, it follows that $\p \in \Phi$.

We prove that (3) implies (1).
Take an inclusion $\p \subseteq \q$ of prime ideals with $\q \in \Phi$.
There is a sequence
\begin{equation}\label{16}
\textstyle
R/\p \hookrightarrow R_\q / \p R_\q \hookrightarrow \Pi_{r \ge 1} R_\q /(\q^r + \p)R_\q \hookrightarrow \Pi_{r \ge 1} \e_R(R /(\q^r + \p))_\q
\end{equation}
of injective maps, where the second injection follows from Krull's intersection theorem.
Since the ideal $(\q^r + \p)R_\q$ is $\q R_\q$-primary, we have $\Ass_{R_\q}(\e_R(R_\q /(\q^r + \p))_\q)= \{\q R_\q\}$, and hence $\Ass_{R}(\e_R(R /(\q^r + \p))_\q)= \{\q \}$ by \cite[Theorem 6.2]{M}.
Note that $\e_R(R /(\q^r + \p))_\q$ is injective as an $R$-module.
The module $\e_R(R /(\q^r + \p))_\q$ is in $\supp_{\Mod R}^{-1} (\Phi)$.
By assumption, the product $\Pi_{r \ge 1} \e_R(R /(\q^r + \p))_\q$ is also in $\supp_{\Mod R}^{-1} (\Phi)$, and hence $\Ass_R(\Pi_{r \ge 1} \e_R(R /(\q^r + \p))_\q)\subseteq \Phi$.
It follows from \eqref{16} that $\p\in\Phi$, and thus $\Phi$ is generalization-closed.

It is obvious that the implications (3) $\Leftarrow$ (2) $\Leftarrow$ (4) $\Rightarrow$ (5) $\Rightarrow$ (3) hold.
The proof of the proposition is done once we verify that (1) implies (4), which is a consequence of \cite[Lemma 4.5]{Nee11}.
\end{proof}

\begin{rem}\label{sm}
The above proposition gives an alternative proof of the smashing conjecture \cite[Theorem 3.3]{Nee} for $\D(\Mod R)$, that is, if the inclusion $\X \hookrightarrow \D(\Mod R)$ from a localizing subcategory of $\D(\Mod R)$ admits a coproduct preserving right adjoint, then $\X$ is generated by compact objects.	
Indeed, let $\X$ be such a subcategory of $\D(\Mod R)$.
Let us show that $\supp \X$ is specialization-closed.
Let $\Y$ be the kernel of $F$.
Then $\Y$ is closed under direct products as $F$ is a right adjoint functor, while $\Y$ is closed under direct sums as $F$ preserves direct sums.
It is easy to check that the equaity ${}^\perp\Y = \X$ holds.
By \cite[Theorem 2.8]{Nee} and Proposition \ref{15}, we have $\X = \supp^{-1}_{\D(\Mod R)}(\Phi)$ and $\Y = \supp^{-1}_{\D(\Mod R)}(\Psi)$ for some subset $\Phi$ and some generalization-closed subset $\Psi$ of $\Spec R$.
Here, recall that $\Hom_R(\e_R(R/\p), \e_R(E/\q)) = 0$ if and only if $\p \not\subseteq \q$ for $\p,\q\in\Spec R$; see \cite[Theorem 3.3.8(5)]{EJ}. 
This fact and the equality ${}^\perp\Y = \X$ show $\Phi = \Psi^\complement$, which is specialization-closed. 
\end{rem}

Now, we give classifications of $\Pi$-closed subcategories of $\Mod R$ and $\D(\Mod R)$.
The second classification can be considered as a dual of \cite[Theorem 3.3]{Nee}.

\begin{thm}\label{31}
Let $n\in\NN$.
\begin{enumerate}[\rm(1)]
\item
The assignments $\X \mapsto \supp \X$ and $\Phi \mapsto \supp_{\Mod R}^{-1}(\Phi)$ give a one-to-one correspondence
$$\textstyle
\{\text{$(\ee,\oplus,\ominus,\Pi)$-closed subcategories of $\Mod R$}\}\leftrightarrow
\{\text{generalization-closed subsets of $\Spec R$}\},
$$
which restricts to a one-to-one correspondence
$$\textstyle
\{\text{$n$-wide $(\E, \oplus,\Pi)$-closed subcategories of $\Mod R$}\}\leftrightarrow
\{\text{$n$-coherent generalization-closed subsets of $\Spec R$}\}.
$$
\item
The assignments $\X \mapsto \supp \X$ and $\Phi \mapsto \supp_{\D(\Mod R)}^{-1}(\Phi)$ give a one-to-one correspondence
$$\textstyle
\{\text{bilocalizing subcategories of $\D(\Mod R)$}\}\leftrightarrow
\{\text{generalization-closed subsets of $\Spec R$}\},
$$
which restricts to a one-to-one correspondence
$$\textstyle
\{\text{$n$-uniform bilocalizing subcategories of $\D(\Mod R)$}\}\leftrightarrow
\{\text{$n$-coherent generalization-closed subsets of $\Spec R$}\}.
$$
\end{enumerate}
\end{thm}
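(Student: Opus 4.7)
The plan is to bolt Proposition \ref{15} onto the classification theorems already established, so that the only content to verify is that $\Pi$-closedness corresponds bijectively to generalization-closedness under each of the previously constructed bijections. All four correspondences will then be obtained by formally restricting.

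For part (1), I would start from Corollary \ref{35}, which says that $\X\mapsto\supp\X$ and $\Phi\mapsto\supp^{-1}_{\Mod R}(\Phi)$ give a bijection between $(\ee,\oplus,\ominus)$-closed subcategories of $\Mod R$ and arbitrary subsets of $\Spec R$. Now suppose $\X$ is such a subcategory and set $\Phi:=\supp\X$. By the bijection one has $\X=\supp^{-1}_{\Mod R}(\Phi)$, so Proposition \ref{15} gives $\X$ is $\Pi$-closed $\Leftrightarrow$ $\Phi$ is generalization-closed. This restricts Corollary \ref{35} to the first bijection of (1). For the second bijection, I would repeat the argument using Theorem \ref{7} in place of Corollary \ref{35}: $n$-wide $(\E,\oplus)$-closed subcategories biject with $n$-coherent subsets via the same pair of assignments, and imposing $\Pi$-closedness on the category side is transported by Proposition \ref{15} to the condition of being generalization-closed on the subset side.

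For part (2), I would argue identically in the derived setting. By Corollary \ref{25}, localizing subcategories of $\D(\Mod R)$ are parametrised by subsets of $\Spec R$ via $\X\mapsto\supp\X$ and $\Phi\mapsto\supp^{-1}_{\D(\Mod R)}(\Phi)$. A localizing subcategory $\X$ is bilocalizing precisely when it is $\Pi$-closed, and again Proposition \ref{15} translates this into generalization-closedness of $\Phi=\supp\X$, since $\X=\supp^{-1}_{\D(\Mod R)}(\Phi)$. The restriction to the $n$-uniform version follows by combining this with Theorem \ref{24}, which supplies the bijection between $n$-uniform localizing subcategories and $n$-coherent subsets; intersecting both sides with the classes just characterised yields the claimed one-to-one correspondence between $n$-uniform bilocalizing subcategories and $n$-coherent generalization-closed subsets.

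No single step should be a genuine obstacle: Proposition \ref{15} already did the hard work of matching $\Pi$-closure to generalization-closure on both the module and derived sides. The only bookkeeping to be careful about is that, having assumed $n\in\NN$, one has the needed bijection $\X=\supp^{-1}(\supp\X)$ before invoking Proposition \ref{15}, which is precisely what Theorems \ref{7} and \ref{24} (together with Corollaries \ref{35} and \ref{25}) supply. Consequently the four statements of the theorem reduce to an immediate diagrammatic restriction of previously proved bijections.
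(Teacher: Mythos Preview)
Your proposal is correct and follows essentially the same approach as the paper, which simply states that the combination of Theorems \ref{7}, \ref{8-2} and Proposition \ref{15} yields both assertions; you have merely unpacked this by citing the derived Corollaries \ref{35} and \ref{25} explicitly. One small citation issue: for the $n$-uniform bijection in part (2) you invoke Theorem \ref{24}, but that result is stated only for $n>0$, so for $n=0$ you should cite Theorem \ref{8-2} directly (as the paper does), which covers all $n\in\NN\cup\{\infty\}$.
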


\begin{proof}
The combination of Theorems \ref{7}, \ref{8-2} and Proposition \ref{15} shows the two statements of the theorem.
\end{proof}

We study more about the above classification in the cases $n = 0, 1$.
Here, let us recall two notions from category theory.

\begin{dfn}\label{6.3}
\begin{enumerate}[(1)]
\item
Define a {\em bismashing} subcategory $\X$ of $\D(\Mod R)$ as a bilocalizing subcategory such that the inclusion functor $\X\hookrightarrow\D(\Mod R)$ has a right adjoint preserving direct sums and a left adjoint preserving direct products.
\item
Recall that a subcategory $\X$ of $\Mod R$ is called {\em bireflective} (resp. {\em Giraud}) if the inclusion functor $\X\hookrightarrow\Mod R$ admits left and right adjoints (resp. admits an exact left adjoint).
\end{enumerate}
\end{dfn}

\begin{rem}\label{32}
A wide $(\oplus,\Pi)$-closed subcategory of $\Mod R$ is nothing but a bireflective Giraud subcategory of $\Mod R$.
In fact, it follows from \cite[Proposition 3.8]{GL}, \cite[Theorem 1.2]{GP} and \cite[Theorem 4.8]{Sch} (see also \cite[Definition 5.2 and Theorem 5.3]{A}) that the wide $(\oplus,\Pi)$-closed subcategories $\X$ of $\Mod R$ are the bireflective extension-closed subcategories $\X$ of $\Mod R$, which bijectively correspond to the epiclasses of ring epimorphisms $R\to S$ with $\Tor_1^R(S,S)=0$ such that $\X$ is the image of the induced fully faithful functor $\Mod S\hookrightarrow\Mod R$, which are the epiclasses of flat ring epimorphisms $R\to S$ by \cite[Proposition 5.4]{epi}.
Let $\phi:R\to S$ be a ring epimorphism and $\theta:\Mod S\hookrightarrow\Mod R$ the induced functor.
If $\phi$ is flat, then the functor $-\otimes_RS:\Mod R\to\Mod S$ is an exact left adjoint to $\theta$.
Conversely, if $\theta$ has an exact left adjoint, then the uniqueness of (left) adjoints implies $\theta\cong-\otimes_RS$, and hence $\phi$ is flat.
\end{rem}

Now we can state and prove the following corollary.
The first and second assertions come from the cases $n=0$ and $n=1$ in Theorem \ref{31}, respectively.
The second assertion is (essentially) shown in \cite[Corollary 4.10]{epi}.

\begin{cor}\label{34}
There are one-to-one correspondences
\begin{enumerate}[\rm(1)]
\item
$\xymatrix@C4pc{
{\left\{
\begin{matrix}
\text{bilocalizing}\\
\text{subcategories of $\Mod R$}
\end{matrix}
\right\}}
\ar@<2pt>[r]^-\supp &
{\left\{
\begin{matrix}
\text{clopen subsets}\\
\text{of $\Spec R$}
\end{matrix}
\right\}}
\ar@<2pt>[l]^-{\supp^{-1}_{\Mod R}}\ar@<-2pt>[r]_-{\supp^{-1}_{\D(\Mod R)}} &
{\left\{
\begin{matrix}
\text{bismashing subcategories}\\
\text{of $\D(\Mod R)$}
\end{matrix}
\right\}}.
\ar@<-2pt>[l]_-\supp}$
\item
$\xymatrix@C4pc{
{\left\{
\begin{matrix}
\text{bireflective Giraud}\\
\text{subcategories of $\Mod R$}
\end{matrix}
\right\}}
\ar@<2pt>[r]^-\supp &
{\left\{
\begin{matrix}
\text{generalization-closed}\\
\text{coherent subsets of $\Spec R$}
\end{matrix}
\right\}}
\ar@<2pt>[l]^-{\supp^{-1}_{\Mod R}}\ar@<-2pt>[r]_-{\supp^{-1}_{\D(\Mod R)}} &
{\left\{
\begin{matrix}
\text{cohomology-closed bilocalizing}\\
\text{subcategories of $\D(\Mod R)$}
\end{matrix}
\right\}}.
\ar@<-2pt>[l]_-\supp}$
\end{enumerate}
\end{cor}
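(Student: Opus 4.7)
The plan is to derive both assertions as specializations of Theorem \ref{31} at $n=0$ and $n=1$ respectively, and to match each resulting class against the classes named in the corollary.

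For part (2), specialize Theorem \ref{31} to $n=1$. By Remark \ref{rem2}(3) a $1$-wide subcategory is exactly a wide subcategory, and by \cite[Lemma 3.5]{Kra08} every $\oplus$-closed wide subcategory of $\Mod R$ is automatically $\E$-closed; combined with Remark \ref{32} this identifies the $1$-wide $(\E,\oplus,\Pi)$-closed subcategories of $\Mod R$ with the bireflective Giraud subcategories. On the subset side, Remark \ref{4}(3) identifies $1$-coherent subsets with coherent subsets. On the derived side, Theorem \ref{24} together with Remark \ref{28}(2) identifies the $1$-uniform localizing subcategories of $\D(\Mod R)$ with the cohomology-closed localizing ones, so the $1$-uniform bilocalizing subcategories are precisely the cohomology-closed bilocalizing subcategories. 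Part (2) then follows directly from Theorem \ref{31}.

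For part (1), specialize Theorem \ref{31} to $n=0$. By Remark \ref{rem2}(2), $0$-wide $\oplus$-closed subcategories of $\Mod R$ are exactly the localizing subcategories; these are automatically $\E$-closed because for a specialization-closed subset $\Phi$ one has $\Ass_{\Mod R}^{-1}(\Phi) = \supp_{\Mod R}^{-1}(\Phi)$ (see the proof of Proposition \ref{3}). Adding $\Pi$-closedness yields the bilocalizing subcategories of $\Mod R$. On the subset side, Proposition \ref{3} gives that $0$-coherent means specialization-closed, and the intersection with the generalization-closed subsets is precisely the class of clopen subsets by Corollary \ref{33}(3). This establishes the left bijection in (1). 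For the right bijection, Proposition \ref{27} identifies $0$-uniform localizing subcategories of $\D(\Mod R)$ with smashing subcategories, so $0$-uniform bilocalizing subcategories are exactly the smashing bilocalizing subcategories, and it remains to identify these with bismashing subcategories.

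A bismashing subcategory is smashing and bilocalizing by Definition \ref{6.3}(1). Conversely, let $\X$ be a smashing bilocalizing subcategory of $\D(\Mod R)$. Its support is specialization-closed (by \cite[Theorem 3.3]{Nee}) and generalization-closed (by Proposition \ref{15}), hence clopen, so $\supp\X = \V(e)$ for an idempotent $e \in R$. The ring decomposition $R \cong eR \times (1-e)R$ induces $\D(\Mod R) \cong \D(\Mod eR) \times \D(\Mod (1-e)R)$, under which $\X$ corresponds to one of the two factors; the projection onto that factor is a direct-product-preserving left adjoint to the inclusion, showing $\X$ is bismashing. The main obstacle I anticipate is precisely this final identification: producing a product-preserving left adjoint to the inclusion of a smashing bilocalizing subcategory. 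The clopen decomposition of $R$ handles it cleanly, after which the rest of the argument is bookkeeping against results already established in the paper.
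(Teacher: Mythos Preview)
Your proposal is correct. Part (2) matches the paper's argument essentially verbatim. Part (1) also sets things up the same way—specializing Theorem \ref{31} at $n=0$ and invoking Proposition \ref{27}—but the final step, upgrading a smashing bilocalizing subcategory to a bismashing one, proceeds differently from the paper.

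The paper argues abstractly: writing $\X=\supp^{-1}_{\D(\Mod R)}(\Phi)$ with $\Phi$ clopen, it sets $\Y=\supp^{-1}_{\D(\Mod R)}(\Phi^\complement)$, uses \cite[Proposition 4.9.1]{Kra10} and \cite[Lemma 4.5]{Nee11} to identify $\X=\Y^\perp$ and to produce a left adjoint $\D(\Mod R)\to\D(\Mod R)/\Y\cong\X$ to the inclusion, and then appeals to Proposition \ref{15} (applied to the generalization-closed set $\Phi^\complement$) to see that $\Y$ is product-closed, whence this left adjoint preserves products. Your route via the idempotent decomposition $R\cong eR\times(1-e)R$ is more concrete: it makes the left adjoint explicit as a projection and avoids the external references to \cite{Kra10} and \cite{Nee11}. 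The paper's categorical argument, on the other hand, would adapt more readily to settings where no ring-theoretic splitting is available.

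One minor point: your justification that localizing subcategories of $\Mod R$ are $\E$-closed tacitly presupposes Gabriel's classification (to write such a subcategory as $\supp_{\Mod R}^{-1}(\Phi)$ for specialization-closed $\Phi$). This is available earlier in the paper (Corollary \ref{36'}), so the argument stands, but citing \cite[Lemma 3.5]{Kra08} directly—as the paper does—is cleaner, since a localizing subcategory is in particular wide and $\oplus$-closed.
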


\begin{proof}
(1) 
Letting $n=0$, we have that the bijection in Theorem \ref{31}(1) is nothing but the first one-to-one correspondence.
Indeed, Remark \ref{rem2}(2), Propositions \ref{3}, and \cite[Lemma 3.5]{Kra08} show that $0$-wide $(\E, \oplus,\Pi)$-closed subcategories of $\Mod R$ are the same things as bilocalizing subcategories of $\Mod R$.
By Proposition \ref{3}, the $0$-coherent generalization closed subsets of $\Spec R$ are the clopen subsets of $\Spec R$.
Using Proposition \ref{27}, we see that the bijection in Theorem \ref{31}(2) gives a one-to-one correspondence between the set of clopen subsets of $\Spec R$ and the set of smashing bilocalizing subcategories of $\D(\Mod R)$.
Thus, it suffices to check that every smashing bilocalizing subcategory $\X$ of $\D(\Mod R)$ is bismashing, i.e., the inclusion functor $\X \hookrightarrow \D(\Mod R)$ has a left adjoint which preserves direct products.
There is a clopen subset $\Phi$ of $\Spec R$ with $\X=\supp^{-1}_{\D(\Mod R)}(\Phi)$.
Set $\Y= \supp^{-1}_{\D(\Mod R)}(\Phi^\complement)$.
By \cite[Proposition 4.9.1]{Kra10} and \cite[Lemma 4.5]{Nee11}, the inclusion functor $\X = \Y^\perp \hookrightarrow \D(\Mod R)$ has a left adjoint $\D(\Mod R) \to \D(\Mod R)/\Y \cong \X$.
Using Proposition 6.1, we see that $\Y= \supp^{-1}_{\D(\Mod R)}(\Phi^\complement)$ is closed under direct products, and hence this left adjoint preserves direct products.

(2) 
The assertion follows by Remarks \ref{rem2}(3), \ref{4}(3), \ref{28}(2), \ref{32}, \cite[Lemma 3.5]{Kra08} and letting $n=1$ in Theorem \ref{31}.
Indeed, Remark \ref{rem2}(3) and \cite[Lemma 3.5]{Kra08} show that the $1$-wide $(\E, \oplus, \Pi)$-closed subcategories are the wide $(\oplus, \Pi)$-closed subcategories which are by Remark \ref{32} the bireflective Giraud subcategories of $\Mod R$.
By Remark \ref{28}(2), the $1$-uniform bilocalizing subcategories are the cohomology-closed bilocalizing subcategories of $\D(\Mod R)$ and by Remark \ref{4}(3) the $1$-coherent generalization-closed subsets are the generalization-closed coherent subsets of $\Spec R$.
\if0
\old{
Apply Remark \ref{rem2}(2), Propositions \ref{3}, \ref{27}, Corollary \ref{33}(3), \cite[Lemma 3.5]{Kra08}, and let $n=0$ in Theorem \ref{31}.
Then we see that it suffices to check that for every smashing bilocalizing subcategory $\X$ of $\D(\Mod R)$, the inclusion functor $\X \hookrightarrow \D(\Mod R)$ has a left adjoint preserving direct products.
There is a clopen subset $\Phi$ of $\Spec R$ with $\X=\supp^{-1}_{\D(\Mod R)}(\Phi)$.
Set $\Y= \supp^{-1}_{\D(\Mod R)}(\Phi^\complement)$.
By \cite[Proposition 4.9.1]{Kra10} and \cite[Lemma 4.5]{Nee11}, the inclusion functor $\X = \Y^\perp \hookrightarrow \D(\Mod R)$ has a left adjoint $\D(\Mod R) \to \D(\Mod R)/\Y \cong \X$ which preserves direct products, since $\supp \Y = \Phi^\complement$ is generalization-closed.
Unify Remarks \ref{rem2}, \ref{4}, \ref{28}, Theorems \ref{7}, \ref{8-2}, \ref{31} and Corollaries \ref{infcoh}, \ref{36'}, \ref{35}, \ref{25}, \ref{34}.

(2) The assertion follows by Remarks \ref{rem2}(3), \ref{4}(3), \ref{28}(2), \ref{32}, \cite[Lemma 3.5]{Kra08} and letting $n=1$ in Theorem \ref{31}.
}
\fi
\end{proof}

Next, we consider classifying certain thick subcategories of $\D(\Mod R)_{\fid}$, $\Db(\Mod R)$ and $\Dp(\Mod R)$.
Here, $\D(\Mod R)_{\fid}$ stands for the subcategory of $\D(\Mod R)$ consisting of complexes of finite injective dimension.
Recall that $\oplus$-closedness (resp. $\Pi$-closedness) means closed under {\it existing} coproducts (resp. products) for a subcategory of $\D(\Mod R)_{\fid}$, $\Db(\Mod R)$ or $\Dp(\Mod R)$.

\begin{thm}\label{40}
Let $n\in\NN\cup\{\infty\}$.
\begin{enumerate}[\rm(1)]
\item
The assignments $\X\mapsto\supp\X$ and $\supp^{-1}_{\Dp(\Mod R)}(\Phi)\mapsfrom\Phi$ give a bijective correspondence between the $n$-uniform $(\E, \oplus, \Pi)$-closed thick subcategories of $\Dp(\Mod R)$ and the $n$-coherent generalization-closed subsets of $\Spec R$.
\item
The assignments $\X\mapsto\supp\X$ and $\supp^{-1}_{\D(\Mod R)_{\fid}}(\Phi) \mapsfrom\Phi$ give a bijective correspondence between the $n$-uniform $(\E, \oplus)$-closed thick subcategories of $\D(\Mod R)_{\fid}$ and the $n$-coherent subsets of $\Spec R$.
\end{enumerate}
\end{thm}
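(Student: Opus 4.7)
The plan is to deduce Theorem \ref{40} by restricting the classifications established in Theorems \ref{7}, \ref{8-2} and \ref{31} to $\Dp(\Mod R)$ and $\D(\Mod R)_{\fid}$. For each part, I must check that the two assignments are well-defined and mutually inverse.

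First I would verify that $\Phi\mapsto\supp^{-1}(\Phi)$ takes values in the stated categories. Given $\Phi$ as in (1) or (2), let $\Y:=\supp^{-1}_{\D(\Mod R)}(\Phi)$. By Theorem \ref{31}(2) (resp.\ Theorem \ref{8-2}), $\Y$ is $n$-uniform bilocalizing (resp.\ $n$-uniform localizing), and its intersection with $\Dp(\Mod R)$ (resp.\ $\D(\Mod R)_{\fid}$) is exactly the candidate $\X$. Thickness, $\E$-closedness (since $\supp X=\bigcup_i\Ass\E^i(X)\subseteq\Phi$ forces each $\E^i(X)$ to lie in $\X$), $\oplus$-closedness, $\Pi$-closedness (under the existing direct product hypothesis, inherited from $\Y$), and $n$-uniformity all restrict from $\Y$ without difficulty, using the observation that the cocycles appearing in Definition \ref{5.1} are modules, so stay inside both $\Dp$ and $\D(\Mod R)_{\fid}$.

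For the reverse direction, I would show that $\X\mapsto\supp\X$ produces an $n$-coherent subset (and generalization-closed in case (1)). The key preliminary is that $\e_R(R/\p)\in\X$ whenever $\p\in\supp\X$: pick $Y\in\X$ with $\p\in\supp Y$, so that $\e_R(R/\p)$ is a direct summand of some $\E^i(Y)$; then $\E^i(Y)\in\X$ by $\E$-closedness, and $\ominus$-closedness (implicit in thickness) yields $\e_R(R/\p)\in\X$. For generalization-closedness in (1), I would replay the argument of Proposition \ref{15}: for $\p\subseteq\q$ with $\q\in\supp\X$, the required witness $\prod_{r\ge1}\e_R(R/(\q^r+\p))_\q$ is a product of copies of $\e_R(R/\q)\in\X$, hence lies in $\X$ by $\Pi$-closedness, and its $\Ass$ contains $\p$. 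For $n$-coherence, I would imitate part (a) of the proof of Theorem \ref{8-2}: given a sequence $0\to K\to X^0\to\cdots\to X^n\to C\to0$ of modules in $\X\cap\Mod R$, assemble the complex $X=(X^0\to\cdots\to X^n)\in\X$ (by thickness), then invoke $n$-uniformity to conclude $K=\ZZ^0X$ and $C=X^n/\BB^nX$ lie in $\X\cap\Mod R$, so $\X\cap\Mod R$ is $n$-wide and $(\E,\oplus)$-closed, and Theorem \ref{7} applies.

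Bijectivity reduces to checking the two composites recover the original data. The equality $\supp(\supp^{-1}(\Phi))=\Phi$ is immediate because $\e_R(R/\p)$ sits in both $\Dp(\Mod R)$ and $\D(\Mod R)_{\fid}$ with support $\{\p\}$. The delicate inclusion is $\X\supseteq\supp^{-1}(\supp\X)$: for such $X$, the minimal injective resolution satisfies $\E^i(X)\cong\bigoplus_{\p}\e_R(R/\p)^{\oplus\mu_i(\p,X)}$ with each $\p\in\supp X\subseteq\supp\X$, so $\E^i(X)\in\X$ by the preliminary combined with $\oplus$-closedness. In case (2), $\E(X)$ is bounded, so $X\cong\E(X)\in\X$ by iterated extensions. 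In case (1), $\E(X)$ is only bounded below, and this is where I expect the main obstacle: I would write $X\cong\E(X)\cong\holim_m\tau^{\le m}\E(X)$ via the standard triangle $\bigoplus_m\tau^{\le m}\E(X)\to\bigoplus_m\tau^{\le m}\E(X)\to X\to$, noting that the direct sum exists in $\Dp$ because all truncations are bounded below by the same uniform bound; each truncation lies in $\X$ by thickness, the direct sum lies in $\X$ by $\oplus$-closedness, and the cone lies in $\X$ by thickness again. This completes the bijectivity and thus the theorem.
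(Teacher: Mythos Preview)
Your proposal is essentially correct and follows the paper's strategy closely, but with one genuine difference and one small gap worth flagging.

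\textbf{The difference.} For the ``delicate inclusion'' $\supp^{-1}_{\Dp}(\supp\X)\subseteq\X$ in part (1), the paper uses the \emph{homotopy limit}: with hard truncations $\E(X)^{\le m}:=(\cdots\to\E^{m-1}(X)\to\E^m(X)\to0)$ one has $X\cong\holim_m\E(X)^{\le m}$ via the triangle
\[
X\to\prod_m\E(X)^{\le m}\to\prod_m\E(X)^{\le m}\to X[1],
\]
so $\Pi$-closedness of $\X$ is what puts $X$ into $\X$. You instead use the \emph{homotopy colimit} triangle with direct sums, relying only on $\oplus$-closedness. This is a legitimate alternative (with hard truncations, $\hocolim_m\E(X)^{\le m}\cong\E(X)\cong X$, and the coproduct lives in $\Dp$ because the terms share a common lower bound). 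It is mildly interesting: it shows that the role of $\Pi$-closedness in part (1) is confined to establishing generalization-closedness of $\supp\X$, not to the reconstruction of $X$. Note your terminology is crossed: you write ``$\holim$'' but display the coproduct triangle, which is the $\hocolim$ triangle.

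\textbf{The gap.} You take \emph{soft} truncations $\tau^{\le m}\E(X)$ and assert they lie in $\X$ ``by thickness.'' But the degree-$m$ term of $\tau^{\le m}\E(X)$ is $\ZZ^m\E(X)$, which you have no reason to believe is in $\X$; thickness only lets you build bounded complexes whose \emph{terms} are already in $\X$. The fix is immediate: use the hard truncations $\E(X)^{\le m}$, whose terms are the $\E^i(X)\in\X$, exactly as the paper does in Claim~\ref{c2}. With that change your hocolim argument goes through.

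A smaller point: in case (2) you conclude ``$\X\cap\Mod R$ is $(\E,\oplus)$-closed, and Theorem~\ref{7} applies.'' Be careful: an arbitrary direct sum of modules of finite injective dimension need not lie in $\D(\Mod R)_{\fid}$, so $\X\cap\Mod R$ is not obviously $\oplus$-closed in $\Mod R$. The argument still works because the relevant direct sums (of indecomposable injectives) are injective, hence in $\D(\Mod R)_{\fid}$; you should either say this, or argue $n$-coherence of $\supp\X$ directly from the definition rather than via Theorem~\ref{7}.
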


\begin{proof}
We begin with establishing two claims.

\begin{claim}\label{c1}
It suffices to show the assertion for $n=\infty$.
\end{claim}

\begin{proof}[Proof of Claim]
Assume that this claim is shown to hold.
We prove that (1) and (2) in the theorem hold for $n\in\NN$.

(1) Let $\X$ be an $n$-uniform $(\E, \oplus, \Pi)$-closed thick subcategory of $\Dp(\Mod R)$ and $\Phi$ an $n$-coherent generalization-closed subset of $\Spec R$.
Then $\X$ is $\infty$-uniform and $\Phi$ is $\infty$-coherent.
By assumption, $\supp\X$ is generalization-closed, and $\supp^{-1}_{\Dp(\Mod R)}(\Phi)$ is thick and $(\E, \oplus, \Pi)$-closed.
The proof of Theorem \ref{8-2} shows that $\supp\X$ is $n$-coherent and $\supp^{-1}_{\Dp(\Mod R)}(\Phi)$ is $n$-uniform.
Thus the assignments are well-defined and bijective.

(2) This is shown in a similar way to the proof of (1).
\renewcommand{\qedsymbol}{$\square$}
\end{proof}

\begin{claim}\label{c2}
Let $\X$ be an $(\E, \Pi)$-closed thick subcategory of $\Dp(\Mod R)$ or an $\E$-closed thick subcategory of $\D(\Mod R)_{\fid}$.
Then $\X$ is $\ee$-closed. 	
\end{claim}

\begin{proof}[Proof of Claim]
The second case is obvious as every complex $X\in\D(\Mod R)_{\fid}$ is isomorphic to the bounded complex $\E(X)$.
Let $\X$ be an $(\E, \Pi)$-closed thick subcategory of $\Dp(\Mod R)$, and take a complex $X \in \Dp(\Mod R)$ with $\E^i(X) \in \X$ for all $i \in \Z$.
Let $\E(X)^{\le i}:= (\cdots \to \E^{i-1}(X) \to \E^i(X) \to 0)$ be a (hard) truncation of $\E(X)$ for each $i$.
As $\E(X)^{\le i}$ is a bounded complex of objects in $\X$, it is in $\X$.
The canonical morphisms $X \cong \E(X) \to \E(X)^{\le i}$ for $i \in \Z$ induce an isomorphism $X \cong \holim_{i \in \Z} \E(X)^{\le i}$ in $\Dp(\Mod R)$; see \cite[Remark 2.3]{BN}.
Thus there is an exact triangle $X \to \Pi_{i \in \Z} \E(X)^{\le i} \to \Pi_{i \in \Z} \E(X)^{\le i} \to X[1]$ in $\Dp(\Mod R)$.
As $\E(X)^{\le i} \in \X$ and $\X$ is a $\Pi$-closed thick subcategory, it is seen that $X\in\X$.
\renewcommand{\qedsymbol}{$\square$}
\end{proof}

Using Remark \ref{rem}(1), we get $\supp(\supp^{-1}_{\Dp(\Mod R)}(\Phi)) = \Phi$ and $\supp(\supp^{-1}_{\D(\Mod R)_{\fid}}(\Phi)) = \Phi$ for any subset $\Phi$ of $\Spec R$.
Let $\X$ be an $(\E, \oplus, \Pi)$-closed thick subcategory of $\Dp(\Mod R)$ (resp. an $(\E, \oplus)$-closed thick subcategory of $\D(\Mod R)_{\fid}$).
Let $X$ be a complex in $\supp^{-1}_{\Dp(\Mod R)}(\supp \X)$ (resp. $\supp^{-1}_{\D(\Mod R)_{\fid}}(\supp \X)$).
Then $\E^i(X) \in \X$ for all $i \in \Z$ by Remark \ref{rem} and the proof of Theorem \ref{7}. 
Claim \ref{c2} shows $X \in \X$.
Hence $\supp^{-1}_{\Dp(\Mod R)}(\supp \X) = \X$ (resp. $\supp^{-1}_{\D(\Mod R)_{\fid}}(\supp \X) = \X$), and the proof of the theorem for $n=\infty$ is completed.
Combining this with Claim \ref{c1}, we are done.
\end{proof}

\begin{rem}
By tracing the argument in Section 5, the above classifications show that if $0\ne n \in \NN \cup \{\infty\}$, then $n$-uniformity and $n$-consistency are equivalent for $(\E, \oplus, \Pi)$-closed thick subcategories of $\Dp(\Mod R)$ and for $(\E, \oplus)$-closed thick subcategories of $\D(\Mod R)_{\fid}$; see the proofs of Proposition \ref{8-4} and Theorem \ref{24}.
\end{rem}

As an application of Theorem \ref{40} together with Corollary \ref{10}, we obtain a higher-dimensional analogue of Br\"{u}ning's classification theorem \cite[Theorem 5.1]{Br} for the module category.

\begin{cor}\label{50}
Let $n \in \NN$.
Assume that $\gldim R \le n$.
Then any $(\E,\oplus)$-closed thick subcategory of $\Db(\Mod R)$ is $n$-uniform.
Hence, there are one-to-one correspondences among the $(\E, \oplus)$-closed thick subcategories of $\Db(\Mod R)$, the $(\E, \oplus)$-closed $n$-wide subcategories of $\Mod R$, and the subsets of $\Spec R$. 
\end{cor}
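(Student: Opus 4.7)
The plan is to reduce the statement to Theorem~\ref{40}(2) combined with Corollary~\ref{10}. Under $\gldim R \le n$, standard commutative algebra gives $\dim R \le \gldim R \le n$, so by Corollary~\ref{10} every subset of $\Spec R$ is $n$-coherent.

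Next I will verify the identification $\Db(\Mod R) = \D(\Mod R)_{\fid}$. The inclusion $\D(\Mod R)_{\fid} \subseteq \Db(\Mod R)$ is immediate, since a complex of finite injective dimension is quasi-isomorphic to a bounded complex of injective modules. Conversely, finite global dimension forces every $R$-module to have injective dimension at most $n$, so the minimal injective resolution $\E(X)$ of any $X \in \Db(\Mod R)$ is concentrated in a range whose length is at most $n$ plus the cohomological amplitude of $X$; hence $X$ has finite injective dimension.

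Given any $(\E,\oplus)$-closed thick subcategory $\X$ of $\Db(\Mod R) = \D(\Mod R)_{\fid}$, set $\Phi := \supp \X$. The argument in the proof of Theorem~\ref{40}(2), relying on Claim~\ref{c2} (that every $\E$-closed thick subcategory of $\D(\Mod R)_{\fid}$ is automatically $\ee$-closed), shows $\X = \supp^{-1}_{\D(\Mod R)_{\fid}}(\Phi)$. Since $\Phi$ is $n$-coherent by the first paragraph, Theorem~\ref{40}(2) guarantees that this preimage is $n$-uniform, and hence so is $\X$. This proves the first assertion. The three-way bijection then follows by combining Theorem~\ref{40}(2), which identifies the $(\E, \oplus)$-closed thick subcategories of $\Db(\Mod R)$ with all (equivalently, all $n$-coherent) subsets of $\Spec R$, with Theorem~\ref{7}, which identifies these with the $(\E, \oplus)$-closed $n$-wide subcategories of $\Mod R$. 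The substantive step is the identification $\Db(\Mod R) = \D(\Mod R)_{\fid}$ together with the invocation of Claim~\ref{c2}; the rest is straightforward assembly of previously established classifications, and I do not anticipate any serious obstacle beyond the dévissage sketched in the second paragraph.
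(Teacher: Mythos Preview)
Your proposal is correct and follows essentially the same approach as the paper's proof: identify $\Db(\Mod R)$ with $\D(\Mod R)_{\fid}$ using $\gldim R\le n$, invoke Corollary~\ref{10} (via $\dim R\le\gldim R\le n$) to make every subset $n$-coherent, and then apply Theorem~\ref{40}(2) together with Theorem~\ref{7}. The only difference is that you unpack the appeal to Theorem~\ref{40}(2) by explicitly citing Claim~\ref{c2} and the equality $\X=\supp^{-1}_{\D(\Mod R)_{\fid}}(\supp\X)$ from its proof, whereas the paper leaves this implicit.
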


\begin{proof}
Since $\gldim R \le n$, the ring $R$ has Krull dimension at most $n$ and $\D(\Mod R)_{\fid} \cong \Db(\Mod R)$.
Thus, the first statement follows from Corollary \ref{10} and Theorem \ref{40}(2).
The second statement is a consequence of the combination of the first statement and Theorems \ref{40}(2), \ref{7}.
\end{proof}

Taking $n=1$ in the above corollary, we obtain the following.

\begin{cor}
Assume that $R$ is hereditary, i.e., $\gldim R \le 1$.
Then, there are one-to-one correspondences among the $\oplus$-closed thick subcategories of $\Db(\Mod R)$, the $\oplus$-closed wide subcategories of $\Mod R$, and the subsets of $\Spec R$. 
\end{cor}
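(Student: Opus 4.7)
The plan is to derive this corollary from Corollary \ref{50} at $n=1$, which provides bijections among the $(\E,\oplus)$-closed thick subcategories of $\Db(\Mod R)$, the $(\E,\oplus)$-closed $1$-wide subcategories of $\Mod R$, and the subsets of $\Spec R$. By Remark \ref{rem2}(3), $1$-wide equals wide. Hence it only remains to show that the $\E$-closedness hypothesis is automatic on both sides when $\gldim R \le 1$.

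On the module side, \cite[Lemma 3.5]{Kra08} already asserts that every $\oplus$-closed wide subcategory of $\Mod R$ is $\E$-closed, so nothing further is needed there. The derived-category side requires more work, and the key input is the classical splitting
$$
X \;\cong\; \bigoplus_{i\in\Z} \H^iX\,[-i] \quad \text{in } \Db(\Mod R),
$$
valid for hereditary $R$. I would prove this by induction on cohomological amplitude: the canonical triangle $\tau_{\le k}X \to X \to \tau_{>k}X \to (\tau_{\le k}X)[1]$ has a connecting morphism lying in an $\Ext_R^{\ge 2}$-group between modules, which vanishes under our assumption, so the triangle splits.

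With this splitting in hand, take a $\oplus$-closed thick subcategory $\X$ of $\Db(\Mod R)$ and $X\in\X$. Each cohomology $\H^iX$ then belongs to $\X\cap\Mod R$ by closure under direct summands and shifts. A short check shows $\X\cap\Mod R$ is a $\oplus$-closed wide subcategory of $\Mod R$: $\oplus$-closure transfers because a direct sum of modules is still concentrated in a single degree, and closure under kernels and cokernels follows by applying the splitting to the cone of a morphism in $\Mod R$. Krause's lemma now yields $\E^j(\H^iX)\in\X$ for all $i,j$; boundedness of $X$ turns the identification $\E^kX\cong\bigoplus_i \E^{k-i}(\H^iX)$ into a finite direct sum of objects of $\X$, so $\E^kX\in\X$. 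Hence $\X$ is $\E$-closed.

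The main obstacle is establishing the hereditary splitting cleanly; once this is in place, the reduction to Krause's lemma and Corollary \ref{50} is mechanical.
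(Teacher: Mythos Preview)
Your proof is correct and takes a somewhat different route from the paper's. Both arguments reduce to Corollary \ref{50} at $n=1$ and must show that every $\oplus$-closed thick subcategory $\X$ of $\Db(\Mod R)$ is $\E$-closed. You exploit the hereditary splitting $X\cong\bigoplus_i\H^iX[-i]$ to prove that $\X\cap\Mod R$ is a $\oplus$-closed wide subcategory of $\Mod R$, invoke Krause's lemma \cite[Lemma 3.5]{Kra08} there, and then reassemble $\E^k(X)\cong\bigoplus_i\E^{k-i}(\H^iX)$ from objects already known to lie in $\X$. The paper instead reruns the argument of \cite[Lemma 3.5]{Kra08} directly in $\Db(\Mod R)$: for $X\in\X$ and $\p\in\supp X$ it uses that $\kappa(\p)$ has projective dimension $\le1$ to get $X\ltensor_R\kappa(\p)\in\X$, extracts $\kappa(\p)$ as a summand of a Tor-module (using cohomology-closedness, which again ultimately rests on the hereditary splitting), and then obtains $\e_R(R/\p)$ as a directed union of sums of copies of $\kappa(\p)$. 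Your approach is more economical in that it reuses Krause's lemma as a black box rather than reproving it; the paper's approach makes the parallel with Krause's original argument explicit and avoids the auxiliary verification that $\X\cap\Mod R$ is wide.
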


\begin{proof}
In view of Corollaries \ref{50} and \ref{36'}, it is enough to show that every $\oplus$-closed ($1$-uniform) thick subcategory $\X$ of $\Db(\Mod R)$ is $\E$-closed.
The proof is almost the same as \cite[Lemma 3.5]{Kra08}.
We need to prove that for all $X \in \X$ and $\p \in \supp X$ it holds that $\e(R/\p) \in \X$.
By the definition of a small support, $\Tor_t^R(X, \kappa(\p)) \neq 0$ for some $t\in\Z$.
Since $R$ is hereditary, $\kappa(\p)$ has a projective $R$-resolution of length at most one.
As $\X$ is thick, $X \ltensor_R \kappa(\p)$ belongs to $\X$.
Since $X$ is $1$-uniform, it is closed under cohomologies.
Hence $\Tor_t^R(X, \kappa(\p))$ is in $\X$, and so is its direct summand $\kappa(\p)$.
Note that $\e(R/\p)$ is a directed union of direct sums of copies of $\kappa(\p)$.
It follows that $\e(R/\p) \in \X$.
\end{proof}

Let $X$ be an $R$-complex.
For each integer $n$, we define {\em (soft) truncations}
$$
\tau^{\le n}X=(\cdots\to X^{n-2}\to X^{n-1}\to\ZZ^nX\to0),\qquad
\tau^{\ge n}X=(0\to \BB^nX\to X^n\to X^{n+1}\to\cdots)
$$
of $X$, where $\ZZ^nX,\BB^nX$ are placed in degree $n,n-1$ respectively.
Note that there is a natural exact sequence $0\to\tau^{\le n}X\to X\to\tau^{\ge n+1}X\to0$ of complexes.
The following result is worth comparing with Corollary \ref{50}.

\begin{prop}
Let $n\in\NN$.
When $\gldim R \le n$, every thick subcategory of $\D(\Mod R)$ is $n$-consistent.
\end{prop}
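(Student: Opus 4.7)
The plan is to realize $\H^iX$ as a direct summand of $X$ in $\D(\Mod R)$; thickness of $\X$ will then finish the job. After shifting we may take $i=0$, so the hypothesis becomes $\H^jX=0$ for $-n<j<n$ with $j\ne 0$, and the goal is the decomposition $X\cong\tau^{\le -1}X\oplus\H^0X\oplus\tau^{\ge 1}X$ in $\D(\Mod R)$, obtained by splitting the two canonical soft-truncation triangles in turn. The case $n=0$ is degenerate: $\gldim R\le 0$ makes $R$ semisimple, so every complex in $\D(\Mod R)$ decomposes as $\bigoplus_j\H^jX[-j]$ and the claim is immediate; we assume $n\ge 1$ below.

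For the first triangle $\tau^{\le -1}X\to X\to\tau^{\ge 0}X\to\tau^{\le -1}X[1]$, the hypothesis $\H^jX=0$ for $-n<j\le -1$ makes $\tau^{\le -n}X\to\tau^{\le -1}X$ a quasi-isomorphism, so in $\D(\Mod R)$ we may represent $\tau^{\le -1}X[1]$ by the complex $\tau^{\le -n}X[1]$, which is concentrated in degrees $\le -n-1$. Since $\tau^{\ge 0}X$ is concentrated in degrees $\ge -1$ and $n\ge 1$, the two degree windows are disjoint, forcing $\Hom_{\D(\Mod R)}(\tau^{\ge 0}X,\tau^{\le -1}X[1])=0$. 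The triangle therefore splits: $X\cong\tau^{\le -1}X\oplus\tau^{\ge 0}X$.

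For the second triangle $\H^0X\to\tau^{\ge 0}X\to\tau^{\ge 1}X\to\H^0X[1]$, the hypothesis on $X$ yields $\tau^{\ge 1}X\cong\tau^{\ge n}X$, concentrated in degrees $\ge n-1$. Here $\gldim R\le n$ enters essentially: it furnishes an injective resolution $I$ of $\H^0X$ concentrated in degrees $[0,n]$, so $\H^0X[1]$ is represented in $\D(\Mod R)$ by the K-injective bounded complex $I[1]$, concentrated in degrees $[-1,n-1]$. The two degree windows overlap only at $n-1$, so any chain map $\tau^{\ge n}X\to I[1]$ collapses to a single morphism $\phi\colon\BB^nX\to I^n$. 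Injectivity of $I^n$ lets us extend $\phi$ along the inclusion $\BB^nX\hookrightarrow X^n$ to $s^n\colon X^n\to I^n$, and taking $s^{n-1}=0$ produces a null-homotopy of $\phi$. Hence $\Hom_{\D(\Mod R)}(\tau^{\ge 1}X,\H^0X[1])=0$, the triangle splits, and $\tau^{\ge 0}X\cong\H^0X\oplus\tau^{\ge 1}X$.

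Combining the two splittings gives the desired decomposition $X\cong\tau^{\le -1}X\oplus\H^0X\oplus\tau^{\ge 1}X$, whence $\H^0X\in\X$ by thickness. The crux is the second Hom-vanishing: the first splitting is formal from the cohomology gap to the left of $0$, while the second converts the cohomology gap to the right of $0$ into a degree gap via a \emph{finite} injective resolution of $\H^0X$, which is exactly what $\gldim R\le n$ provides, and then exploits injectivity of the top term $I^n$ to kill the one surviving chain map up to homotopy.
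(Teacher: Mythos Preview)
Your second splitting is fine: since $I[1]$ is a bounded complex of injectives it is K-injective, so $\Hom_{\D}=\Hom_K$ and your null-homotopy via the injectivity of $I^n$ goes through. The problem is the first splitting. The assertion that ``disjoint degree windows force $\Hom_{\D(\Mod R)}(\tau^{\ge 0}X,\tau^{\le -1}X[1])=0$'' is not valid: disjointness of the degree ranges of two chosen \emph{representatives} only kills chain maps, hence $\Hom_K$, not $\Hom_{\D}$. Neither $\tau^{\ge 0}X$ nor $\tau^{\le -n}X[1]$ is K-projective or K-injective, so you cannot identify $\Hom_{\D}$ with $\Hom_K$ here. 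Concretely, for modules $M,N$ one has $\Hom_{\D}(M,N[k])=\Ext^k(M,N)$, which is generally nonzero even though $M$ sits in degree $0$ and $N[k]$ in degree $-k$. Your concluding remark that ``the first splitting is formal from the cohomology gap'' is therefore incorrect: this vanishing genuinely needs $\gldim R\le n$.

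A repair in the spirit of your second argument is to dualize it. Use instead the triangle $\tau^{\le -1}X\to\tau^{\le 0}X\to\H^0X\to\tau^{\le -1}X[1]$ and replace $\H^0X$ by a projective resolution $P$ of length $\le n$; then $P$ is K-projective in degrees $[-n,0]$ while $\tau^{\le -1}X[1]\cong\tau^{\le -n}X[1]$ lives in degrees $\le -n-1$, so $\Hom_{\D}(\H^0X,\tau^{\le -1}X[1])=\Hom_K(P,\tau^{\le -n}X[1])=0$. Combined with your second splitting this yields $\H^0X$ as a summand of $X$ (after one more vanishing of the same type, or by first producing a K-projective model of $\tau^{\ge 0}X$ concentrated in degrees $\ge -n$). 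The paper avoids these derived-Hom computations altogether: it replaces $X$ by a semiprojective resolution at the outset and uses $\gldim R\le n$ to see that the boundary $\BB^{k+1}P$ is projective, so the short exact sequence $0\to\tau^{\le k}P\to P\to\tau^{\ge k+1}P\to 0$ splits already as complexes; applying this twice gives $X\cong\tau^{\le i-n}X\oplus\H^iX[-i]\oplus\tau^{\ge i+1}X$ directly.
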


\begin{proof}
First of all, we claim that for an integer $k$ and a complex $P$ of projective $R$-modules with $\H^jP=0$ for all $k+1\le j\le k+n-1$ there is a direct sum decomposition $P\cong\tau^{\le k}P\oplus\tau^{\ge k+1}P$ of $R$-complexes.
In fact, since $\gldim R\le n$, the exact sequence $0\to\BB^{k+1}P\to P^{k+1}\to\cdots\to P^{k+n}$ shows that the $R$-module $\BB^{k+1}P$ is projective.
Hence the exact sequence $0\to\ZZ^kP\to P^k\to\BB^{k+1}P\to0$ splits, and so does the exact sequence $0\to\tau^{\le k}P\to P\to\tau^{\ge k+1}P\to0$.

Let $\X$ be a thick subcategory of $\D(\Mod R)$.
Let $X \in \X$ be such that $\H^jX = 0$ for all $j$ with $i \neq j \in (i-n, i+n)$.
Replacing $X$ with a semiprojective resolution, we may assume that $X^j$ is projective for each $j\in\Z$.
As $\H^jX$ (resp. $\H^j(\tau^{\le i}X)$) vanishes for all $i+1\le j\le i+n-1$ (resp. $(i-n)+1\le j\le (i-n)+n-1$), the claim shows that there are direct sum decompositions $X \cong \tau^{\le i}X \oplus \tau^{\ge i+1} X$ and $\tau^{\le i}X \cong \tau^{\le i-n} \tau^{\le i}X \oplus \tau^{\ge i-n+1} \tau^{\le i}X \cong \tau^{\le i-n} X \oplus \H^iX[-i]$.
Here, to apply the claim we replace $\tau^{\le i}X$ with a projective resolution $(\cdots\to Q^{i-2}\to Q^{i-1}\to Q^i\to0)$, so that each component is a projective $R$-module.
Thus we get $X \cong \tau^{\le i-n} X \oplus \H^iX[-i] \oplus \tau^{\ge i+1} X$.
As $\X$ is closed under direct summands and shifts, we obtain $\H^iX \in \X$.
\end{proof}

\section{A remark on cofinite modules}

For an ideal $J$ over a commutative noetherian ring $R$, Hartshorne \cite{H} introduced the notion of a $J$-cofinite module and posed a question of whether the category of $J$-cofinite modules forms an abelian category.
A counterexample to this question is given by himself.
However, the question is still considered by several authors in the direction of finding conditions that lead the question to be affirmative, e.g., \cite{BNS, DFT, Me1, Me2}.
In this section, we consider a weakened version of cofinite modules and $n$-coherence of the category of such modules.
Although our result does not recover known results, it may provide another approach to the question.

By \cite[Corollary 3.5]{DFT}, cofiniteness of modules are related to artinianness of local cohomology modules.
Our strategy is to relax artinianness as follows.

\begin{dfn}
We say that an $R$-module $M$ is {\em ind-artinian} if all finitely generated submodules of $M$ have finite length.
Note that any artinian $R$-module is ind-artinian and an $R$-module $M$ is ind-artinian if and only if it is an inductive limit of artinian modules.
\end{dfn}

The following proposition shows that the family of ind-artinian modules forms a localizing subcategory.
More strongly, it is the smallest localizing subcategory of $\Mod R$ containing all artinian modules.

\begin{prop}\label{923-5}
The subcategory of $\Mod R$ consisting of ind-artinian modules coincides with $\Ass_{\Mod R}^{-1}(\Max R)$.
This also coincides with $\supp_{\Mod R}^{-1}(\Max R)$, and is localizing.
\end{prop}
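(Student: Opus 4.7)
The plan is to prove the three asserted statements in turn: first the characterization of ind-artinian modules as $\Ass_{\Mod R}^{-1}(\Max R)$, then the coincidence of this subcategory with $\supp^{-1}_{\Mod R}(\Max R)$, and finally the localizing property.

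First I would show that an $R$-module $M$ is ind-artinian if and only if $\Ass M \subseteq \Max R$. For the forward direction, any $\p \in \Ass M$ yields an embedding $R/\p \hookrightarrow M$; as $R/\p$ is cyclic hence finitely generated, it must have finite length by assumption, and a cyclic module $R/\p$ has finite length precisely when $\p \in \Max R$. For the converse, let $N$ be a finitely generated submodule of $M$. Then $\Ass N \subseteq \Ass M \subseteq \Max R$ and $\Ass N$ is finite. Since $N$ is finitely generated, $\Supp N = \V(\ann N)$ and every element of $\Supp N$ contains some element of $\Ass N$; as the latter are maximal, $\Supp N = \Ass N$ is a finite set of maximal ideals. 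Thus $R/\ann N$ is a zero-dimensional finitely generated (hence artinian) ring, and $N$, being finitely generated over $R/\ann N$, has finite length.

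Next I would establish $\Ass^{-1}_{\Mod R}(\Max R) = \supp^{-1}_{\Mod R}(\Max R)$. The inclusion $\supseteq$ is immediate from the general containment $\Ass M \subseteq \supp M$ recorded in Remark \ref{rem}(1). For the reverse inclusion, the essential point is that $\Max R$ is specialization-closed (trivially, since maximal ideals have no proper specializations). Hence by \cite[Theorem 1.1]{cd}, the subcategory $\Ass^{-1}_{\Mod R}(\Max R)$ is closed under quotient modules. Exactly as in the ``conversely'' half of the proof of Proposition \ref{3}, an inductive application to the surjections $\E^i(M) \twoheadrightarrow \mho^{i+1}M$ shows that $\E^i(M) \in \Ass^{-1}_{\Inj R}(\Max R)$ for all $i \ge 0$, so that $\supp M = \bigcup_{i \ge 0} \Ass \E^i(M) \subseteq \Max R$ by Remark \ref{rem}(1).

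Finally, since $\Max R$ is specialization-closed, the subcategory $\supp^{-1}_{\Mod R}(\Max R)$ is localizing by Corollary \ref{36'}. No step appears genuinely difficult; the only place requiring a little care is the finite-length argument in the first step, where one must invoke that a finitely generated module with zero-dimensional support is artinian. Everything else reduces to bookkeeping with associated primes and a direct appeal to results already set up in Sections 2--4.
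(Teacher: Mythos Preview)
Your proof is correct and follows essentially the same approach as the paper. The first step matches the paper's argument almost verbatim; for the second and third assertions the paper simply cites \cite[Theorem 2.6]{cd}, whereas you spell out the argument using the specialization-closedness of $\Max R$ together with Proposition~\ref{3} and Corollary~\ref{36'}, which amounts to the same thing.
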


\begin{proof}
Let $M$ be an $R$-module.
If $\p\in\Ass M$ is a nonmaximal prime ideal, then $M$ has a submodule isomorphic to $R/\p$, which does not have finite length.
This shows that every ind-artinian $R$-module belongs to $\Ass_{\Mod R}^{-1}(\Max R)$.
Conversely, assume $\Ass M\subseteq\Max R$.
If $N$ is a finitely generated submodule of $M$, then $\Ass N\subseteq\Ass M\subseteq\Max R$.
Since the minimal primes of $M$ are associated primes, we see that $\Supp N\subseteq\Max R$.
As $N$ is finitely generated, it has finite length.
Thus the proof of the first assertion of the proposition is completed.
The second assertion follows from \cite[Theorem 2.6]{cd}.
\end{proof}

The following lemma generalizes \cite[Proposition 4.5]{cd} in a relative setting.

\begin{lem}\label{923-3}
Let $\Phi$ be a specialization-closed subset of $\Spec R$.
Let $M$ be an $R$-module.
\begin{enumerate}[\rm(1)]
\item
There is an equality $\Ass_R\Gamma_\Phi(M)=\Ass_RM\cap\Phi$.
\item
Let $n\ge0$ be an integer.
\begin{enumerate}[\rm(a)]
\item
Let $\Psi$ be a subset of $\Spec R$.
Then $\H_\Phi^i(M)\in\supp_{\Mod R}^{-1}(\Psi)$ for all $i\le n$ if and only if $\Gamma_\Phi(\E^i(M))\in\supp_{\Mod R}^{-1}(\Psi)$ for all $i\le n$.
\item
Let $\X$ be an $(\ee,\oplus)$-closed subcategory of $\Mod R$.
Then $\H_\Phi^i(M)\in\X$ for all $i\le n$ if and only if $\Gamma_\Phi(\E^i(M))\in\X$ for all $i\le n$.
\end{enumerate}
\end{enumerate}
\end{lem}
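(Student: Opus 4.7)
The plan is to prove (1) directly from the definition of $\Gamma_\Phi$ and to establish (2) by induction on $n\ge 0$, handling (a) and (b) in parallel. For (1), the inclusion $\Ass\Gamma_\Phi(M) \subseteq \Ass M \cap \Phi$ is immediate: if $\p = \ann m$ for some $0 \ne m \in \Gamma_\Phi(M)$, then $\V(\p) = \V(\ann m) \subseteq \Phi$ by the definition of $\Gamma_\Phi$, while $\p \in \Ass M$ is clear since $\Gamma_\Phi(M) \subseteq M$. The reverse inclusion exploits specialization-closedness: if $\p \in \Ass M \cap \Phi$ with $\p = \ann m$, then $\V(\p) \subseteq \Phi$, so $Rm \cong R/\p$ is $\Phi$-torsion and $\p \in \Ass \Gamma_\Phi(M)$.

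For part (2), the central structural input is the identification $\Gamma_\Phi(\E^0(M)) \cong \e_R(\Gamma_\Phi(M))$, which I would obtain from part (1), the direct-sum decomposition of $\E^0(M)$ into indecomposable injectives (Remark \ref{rem}(3)), and the Bass-number matching $\mu_0(\p,M) = \mu_0(\p,\Gamma_\Phi(M))$ for $\p \in \Phi$ (coming from the fact that any map $\kappa(\p) \to M$ factors through $\Gamma_\Phi(M)$ whenever $\p \in \Phi$). Feeding $0 \to M \to \E^0(M) \to \mho M \to 0$ into the long exact sequence of $\H_\Phi^\bullet$, and using that injective modules are $\Gamma_\Phi$-acyclic, then yields the cosyzygy relations $\E^i(\mho M) = \E^{i+1}(M)$ (for $i \ge 0$) and $\H_\Phi^i(\mho M) = \H_\Phi^{i+1}(M)$ (for $i \ge 1$), together with the short exact sequence
$$0 \to \mho \Gamma_\Phi(M) \to \Gamma_\Phi(\mho M) \to \H_\Phi^1(M) \to 0.$$

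The base case $n = 0$ reduces to the equivalence $\Gamma_\Phi(M) \in \X \Leftrightarrow \e_R(\Gamma_\Phi(M)) \in \X$: in (a) this follows from $\supp \e_R(N) = \Ass N \subseteq \supp N$ (Remark \ref{rem}(1)); in (b) it is handled by $\E$- and $\ee$-closedness. For the inductive step, one applies the inductive hypothesis to $\mho M$: the cosyzygy relations translate the desired statements for indices $i \ge 2$ on $M$ into those for $i \ge 1$ on $\mho M$, and the gap at $\Gamma_\Phi(\mho M)$ is bridged via the displayed short exact sequence after checking $\mho \Gamma_\Phi(M) \in \X$. For (a), the latter is extracted from the SES $0 \to \Gamma_\Phi(M) \to \e_R(\Gamma_\Phi(M)) \to \mho \Gamma_\Phi(M) \to 0$ by the inclusion $\supp \mho \Gamma_\Phi(M) \subseteq \Ass \Gamma_\Phi(M) \cup \supp \Gamma_\Phi(M) \subseteq \supp \Gamma_\Phi(M)$, combined with the extension closure of $\supp^{-1}_{\Mod R}(\Psi)$ (Remark \ref{rem}(2)); for (b), it follows from $\E^j(\mho \Gamma_\Phi(M)) = \E^{j+1}(\Gamma_\Phi(M)) \in \X$ via $\E$-closedness, then $\ee$-closedness.

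The main obstacle will be closing the inductive step of part (b): $(\ee,\oplus)$-closedness does not a priori imply extension-closure, so the displayed short exact sequence cannot be exploited as a pure extension argument the way it is in (a). The remedy is to recognize that $\Gamma_\Phi(\E^\bullet(M))$ is itself the minimal injective resolution of $\R\Gamma_\Phi(M) \in \Dp(\Mod R)$, so that the $\ee$-closedness of $\X$ extended to complexes (Definition \ref{2.4}(4)) allows one to transfer membership between the resolution terms $\Gamma_\Phi(\E^i(M))$ and the cohomologies $\H_\Phi^i(M)$ through appropriate truncations of $\R\Gamma_\Phi(M)$.
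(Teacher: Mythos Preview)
Your treatment of (1) and of the ``only if'' implication in (2a) is essentially the paper's: both induct on $n$ by passing to the cosyzygy $\mho M$ through the four-term sequence $0 \to \Gamma_\Phi(M) \to \Gamma_\Phi(\E^0(M)) \to \Gamma_\Phi(\mho M) \to \H_\Phi^1(M) \to 0$ and extension-closure of $\supp^{-1}_{\Mod R}(\Psi)$; your identification $\Gamma_\Phi(\E^0(M)) \cong \e_R(\Gamma_\Phi(M))$ is exactly what underlies the paper's base-case computation $\supp \Gamma_\Phi(\E^0(M)) = \Ass \Gamma_\Phi(M) \subseteq \supp \Gamma_\Phi(M)$.

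The genuine gap is in your base case for the ``if'' direction. You assert $\Gamma_\Phi(M) \in \X \Leftrightarrow \e_R(\Gamma_\Phi(M)) \in \X$, but for $\X = \supp^{-1}_{\Mod R}(\Psi)$ the chain $\supp \e_R(N) = \Ass N \subseteq \supp N$ only yields the forward implication, and for $\X$ merely $(\ee,\oplus)$-closed the backward implication would require \emph{all} $\E^i(\Gamma_\Phi(M))$ to lie in $\X$, not just $\E^0$. In fact the backward direction is false at $n=0$: for $R$ a domain of positive dimension, $\Phi = \Spec R$, $M = R$, $\Psi = \{(0)\}$, one has $\Gamma_\Phi(\E^0(M)) = \e_R(R) \in \supp^{-1}_{\Mod R}(\Psi)$ while $\H_\Phi^0(M) = R \notin \supp^{-1}_{\Mod R}(\Psi)$; taking $\X = \supp^{-1}_{\Mod R}(\{(0)\})$ gives the same failure for (2b). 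The paper's own one-line argument for the ``if'' part appeals to Proposition~\ref{923-5}, which concerns the localizing subcategory $\supp^{-1}_{\Mod R}(\Max R)$ --- adequate for the actual applications in Proposition~\ref{923-4} and Corollary~\ref{923-2}, but not for arbitrary $\Psi$ at finite $n$. Your proposed remedy for (2b) via $\ee$-closedness on truncations of $\R\Gamma_\Phi(M)$ does not circumvent this: at best it recovers the statement for all $i \ge 0$ simultaneously, where both conditions collapse to $\supp \R\Gamma_\Phi(M) \subseteq \Psi$.
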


\begin{proof}
(1) We use \cite[Propositions 3.2(1a) and 2.3(6)]{cd} for both inclusions.
Let $\p\in\Ass M\cap\Phi$.
Then there is a monomorphism $R/\p\hookrightarrow M$, which induces a monomorphism $\Gamma_\Phi(R/\p)\hookrightarrow\Gamma_\Phi(M)$.
As $\V(\p)\subseteq\Phi$, we have $\Gamma_\Phi(R/\p)=R/\p$.

(2a) The equality $\H_\Phi^i(M)=\H^i(\E(M))$ and Proposition \ref{923-5} deduce the ``if'' part.
We prove the ``only if'' part by induction on $n$.
We have
$$
\supp\Gamma_\Phi(\E^0(M))
=\Ass\Gamma_\Phi(\E^0(M))
=\Ass\E^0(M)\cap\Phi
=\Ass M\cap\Phi
=\Ass\Gamma_\Phi(M)
\subseteq\supp\Gamma_\Phi(M)
\subseteq\Psi,
$$
where the first equality holds since $\Gamma_\Phi(\E^0(M))$ is injective, and the second and fourth ones follow from Lemma \ref{923-3}(1).
Now let $n>0$, and take an exact sequence $0\to M\to\e(M)\to\mho M\to0$.
Then $\H_\Phi^i(\mho M)=\H_\Phi^{i+1}(M)\in\supp^{-1}_{\Mod R}(\Psi)$ for all $1\le i\le n-1$.
An exact sequence $0\to\Gamma_\Phi(M)\to\Gamma_\Phi(\E^0(M))\to\Gamma_\Phi(\mho M)\to\H_\Phi^1(M)\to0$ is induced.
The modules $\Gamma_\Phi(M),\H_\Phi^1(M)$ belong to $\supp_{\Mod R}^{-1}(\Psi)$ by assumption, while so does $\Gamma_\Phi(\E^0(M))$ by the induction basis.
It is observed that $\Gamma_\Phi(\mho M)\in\supp_{\Mod R}^{-1}(\Psi)$.
The induction hypothesis implies that $\Gamma_\Phi(\E^i(M))=\Gamma_\Phi(\E^{i-1}(\mho M))\in\supp^{-1}_{\Mod R}(\Psi)$ for all $1\le i\le n$.
Thus the proof of the assertion is completed.

(2b) The assertion is a direct consequence of (2a) and Corollary \ref{36'}.
\end{proof}

\begin{prop}\label{923-4}
Let $\Phi,\Psi$ be subsets of $\Spec R$.
Suppose that $\Phi$ is specialization-closed.
Then $\supp_{\Mod R}^{-1}(\Phi^\complement\cup\Psi)$ coincides with the subcategory of $\Mod R$ consisting of modules $M$ with $\H_\Phi^i(M)\in\supp^{-1}_{\Mod R}(\Psi)$ for all $i\ge0$.
\end{prop}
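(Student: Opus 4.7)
The plan is to reduce the proposition to a statement about the submodules $\Gamma_\Phi(\E^i(M))$ using Lemma \ref{923-3}(2a), and then to exploit the indecomposable decomposition of the injective modules $\E^i(M)$ from Remark \ref{rem}(3) to re-express everything directly in terms of $\supp M$.

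First I would apply Lemma \ref{923-3}(2a), taken over every $n \in \NN$, to conclude that for any $M \in \Mod R$ one has
\[
\H_\Phi^i(M) \in \supp^{-1}_{\Mod R}(\Psi) \text{ for all } i \ge 0 \iff \Gamma_\Phi(\E^i(M)) \in \supp^{-1}_{\Mod R}(\Psi) \text{ for all } i \ge 0.
\]
Thus it suffices to show that this right-hand condition is equivalent to $M \in \supp^{-1}_{\Mod R}(\Phi^\complement \cup \Psi)$.

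Next I would analyze each $\Gamma_\Phi(\E^i(M))$. By Remark \ref{rem}(3) there is an isomorphism $\E^i(M) \cong \bigoplus_{\p \in \Spec R} \e_R(R/\p)^{\oplus \mu_i(\p,M)}$. Since $\Gamma_\Phi$ commutes with arbitrary direct sums and since $\Phi$ is specialization-closed, a standard computation gives $\Gamma_\Phi(\e_R(R/\p)) = \e_R(R/\p)$ when $\p \in \Phi$ and $0$ otherwise (using that $\Ass \e_R(R/\p) = \{\p\}$ and $\Supp \e_R(R/\p) = \V(\p)$). Therefore
\[
\Gamma_\Phi(\E^i(M)) \cong \bigoplus_{\p \in \Phi} \e_R(R/\p)^{\oplus \mu_i(\p,M)},
\]
and by Remark \ref{rem}(1) applied to the injective module on the right together with the fact that $\supp(\bigoplus) = \bigcup \supp$, we compute
\[
\supp \Gamma_\Phi(\E^i(M)) = \{\p \in \Phi \mid \mu_i(\p,M) \ne 0\}.
\]

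Finally, combining with the equality $\supp M = \bigcup_{i \ge 0} \Ass \E^i(M) = \bigcup_{i \ge 0} \{\p \mid \mu_i(\p,M) \ne 0\}$ from Remark \ref{rem}(1)(3), the condition $\supp \Gamma_\Phi(\E^i(M)) \subseteq \Psi$ for all $i \ge 0$ translates directly to $\Phi \cap \supp M \subseteq \Psi$, i.e., $\supp M \subseteq \Phi^\complement \cup \Psi$. This yields both inclusions simultaneously. I do not anticipate a substantial obstacle; the only point that requires a little care is the computation of $\Gamma_\Phi(\e_R(R/\p))$, which depends crucially on the specialization-closedness hypothesis on $\Phi$.
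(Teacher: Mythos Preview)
Your proposal is correct and follows essentially the same route as the paper's proof: both reduce via Lemma \ref{923-3}(2a) to the condition $\supp\Gamma_\Phi(\E^i(M))\subseteq\Psi$ for all $i$, identify this support with $\Ass\E^i(M)\cap\Phi$, and finish by the equality $\supp M=\bigcup_i\Ass\E^i(M)$. The only cosmetic difference is that you compute $\supp\Gamma_\Phi(\E^i(M))$ via the Bass decomposition of $\E^i(M)$ and an explicit evaluation of $\Gamma_\Phi$ on each $\e_R(R/\p)$, whereas the paper obtains the same identity more quickly from Lemma \ref{923-3}(1) together with the fact that $\supp=\Ass$ on injectives.
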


\begin{proof}
Let $M$ be an $R$-module.
Using Lemma \ref{923-3}(2a), we have that $\H_\Phi^i(M)\in\supp^{-1}_{\Mod R}(\Psi)$ for all $i\ge0$ if and only if $\Psi$ contains $\supp\Gamma_\Phi(\E^i(M))=\Ass\Gamma_\Phi(\E^i(M))=\Ass\E^i(M)\cap\Phi$ for all $i\ge0$, where the first equality holds since $\Gamma_\Phi(\E^i(M))$ is injective, and the second one follows from Lemma \ref{923-3}(1).
The latter condition is equivalent to saying that $\Phi^\complement\cup\Psi$ contains $\Ass\E^i(M)$ for all $i\ge0$, which is equivalent to the inclusion $\Phi^\complement\cup\Psi\supseteq\supp M$.
\end{proof}

\begin{rem}
Putting $\X=0$ in Lemma \ref{923-3}(2) and $\Psi=\emptyset$ in Proposition \ref{923-4}, we recover \cite[Proposition 4.5]{cd} and \cite[Remark 3.8(1)]{cd}, respectively.
\end{rem}

For a specialization-closed subset $\Phi$ of $\Spec R$, we denote by $\hh_\Phi$ the subcategory of $\Mod R$ consisting of modules $M$ such that $\H_\Phi^i(M)$ is ind-artinian for all $i\ge0$.

\begin{cor}\label{923-2}
Let $\Phi$ be a specialization-closed subset of $\Spec R$.
\begin{enumerate}[\rm(1)]
\item
There is an equality $\hh_\Phi=\supp^{-1}_{\Mod R}(\Phi^\complement\cup\Max R)$.
\item
Let $R$ have finite positive Krull dimension $d$.
Then $\hh_\Phi$ is $(d-1)$-wide.
In particular, $\hh_\Phi$ is abelian if $d\le2$.
\end{enumerate}
\end{cor}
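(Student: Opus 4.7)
The plan for part (1) is simply to chain together Propositions \ref{923-5} and \ref{923-4}. By the former, the subcategory of ind-artinian $R$-modules equals $\supp_{\Mod R}^{-1}(\Max R)$, so $\hh_\Phi$ consists precisely of those modules $M$ with $\H_\Phi^i(M) \in \supp_{\Mod R}^{-1}(\Max R)$ for every $i \ge 0$. Applying Proposition \ref{923-4} with $\Psi = \Max R$ then immediately yields $\hh_\Phi = \supp_{\Mod R}^{-1}(\Phi^\complement \cup \Max R)$.

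For part (2), I will deduce the $(d-1)$-wideness of $\hh_\Phi$ from Theorem \ref{7} by showing that the subset $\Phi^\complement \cup \Max R$ is $(d-1)$-coherent. Write $\Phi^\complement \cup \Max R = \Spec R \setminus \Xi$, where $\Xi := \Phi \cap (\Max R)^\complement$ denotes the set of nonmaximal primes in $\Phi$. Since $\Spec R$ is trivially specialization-closed, Proposition \ref{3} tells us it is $0$-coherent, and hence $(d-1)$-coherent by Remark \ref{4}(2). Moreover, each $\p \in \Xi$ is properly contained in some maximal ideal, so $\dim R/\p \ge 1$ and $\height \p \le \dim R - \dim R/\p \le d-1$. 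This is exactly hypothesis (i) of Proposition \ref{923-1} with $n = d-1$, so that proposition delivers the $(d-1)$-coherence of $\Spec R \setminus \Xi$. Combining with part (1) and Theorem \ref{7} (which guarantees $(d-1)$-wideness of $\supp^{-1}_{\Mod R}$ of any $(d-1)$-coherent subset) finishes the proof of the first assertion in (2).

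The final clause that $\hh_\Phi$ is abelian when $d \le 2$ is then routine: in that range $d-1 \le 1$, so $\hh_\Phi$ is at worst $1$-wide, hence wide by Remark \ref{rem2}(3), and any wide subcategory of $\Mod R$ is closed under kernels, cokernels and extensions and therefore forms an abelian category in its own right. I do not anticipate any serious obstacle; all the ingredients have been assembled in Sections 2--3, and the only nontrivial observation is noticing that the height bound for nonmaximal primes is exactly what the case (i) of Proposition \ref{923-1} demands.
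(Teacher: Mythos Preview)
Your proof is correct and follows essentially the same route as the paper's: part (1) is obtained by combining Propositions \ref{923-5} and \ref{923-4} with $\Psi=\Max R$, and part (2) by applying Proposition \ref{923-1}(i) to $\Xi=\Phi\setminus\Max R$ inside $\Psi=\Spec R$ with $n=d-1$, then invoking Theorem \ref{7}. The only cosmetic difference is that the paper justifies $\height\p\le d-1$ for nonmaximal $\p$ directly (if $\height\p=d=\dim R$ then $\p$ must be maximal), whereas you go through $\height\p+\dim R/\p\le\dim R$, and the paper cites an external remark for ``wide $\Rightarrow$ abelian'' while you spell it out.
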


\begin{proof}
(1) Apply Proposition \ref{923-4} for $\Psi=\Max R$ and use Proposition \ref{923-5}.

(2) We have $\height\p<d$ for all $\p\in\Phi\setminus\Max R$.
Proposition \ref{923-1}(1) implies that $(\Phi\setminus\Max R)^\complement=\Phi^\complement\cup\Max R$ is $(d-1)$-coherent.
It follows from Corollary \ref{923-2}(1) and Theorem \ref{7} that $\hh_\Phi$ is $(d-1)$-wide.
Thus the proof of the first assertion is completed.
The second assertion follows from the first one and \cite[Remark 4.2(5)]{cd}.
\end{proof}

\begin{dfn}
Let $\Phi$ be a specialization-closed subset of $\Spec R$.
An $R$-module $M$ is said to be {\em $\Phi$-cofinite} if $\Supp_RM\subseteq\Phi$ and the $R$-module $\Ext_R^i(R/I,M)$ is finitely generated for all integers $i$ and ideals $I$ of $R$ with $\V(I)\subseteq\Phi$.
We denote by $\cof\Phi$ the subcategory of $\Mod R$ consisting of $\Phi$-cofinite $R$-modules.
\end{dfn}

If $R$ is semilocal, then $\cof\Phi=\{M\in\Supp_{\Mod R}^{-1}(\Phi)\mid\text{$\H_\Phi^i(M^\vee)$ is artinian for all $i\ge0$}\}$ by \cite[Corollary 3.5]{DFT}, where $(-)^\vee=\Hom_R(-,\e_R(R/\rad R))$.
As an analogue, (for a not-necessarily-semilocal ring $R$) we define
$$
\cof'\Phi=\{M\in\Supp_{\Mod R}^{-1}(\Phi)\mid\text{$\H_\Phi^i(M^\vee)$ is ind-artinian for all $i\ge0$}\}.
$$
It is proved in \cite[Theorem 3.13(iii)]{DFT} that if $\dim R\le2$, then $\cof\Phi$ is abelian for a specialization-closed subset $\Phi$ of $\Spec R$.
We can prove an analogue of this result for $\cof'\Phi$.

\begin{cor}
One has $\cof'\Phi=\{M\in\supp^{-1}_{\Mod R}(\Phi)\mid M^\vee\in\supp_{\Mod R}^{-1}(\Phi^\complement\cup\Max R)\}$ for a specialization-closed subset $\Phi$ of $\Spec R$.
If $\dim R\le2$, then $\cof'\Phi$ is abelian.
\end{cor}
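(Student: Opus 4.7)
The plan is to deduce both parts from the already-established local cohomology description of ind-artinian modules, together with Corollary \ref{923-2}(2).

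For the equality, I would unpack the definition of $\cof'\Phi$ and apply two earlier results in sequence. Since $\Phi$ is specialization-closed and $\Supp M$ coincides with the specialization closure of $\supp M$ (so that $\Ass M \subseteq \supp M \subseteq \Supp M$ all generate the same specialization-closed set), the condition $\Supp M \subseteq \Phi$ is equivalent to $\supp M \subseteq \Phi$. By Proposition \ref{923-5}, an $R$-module is ind-artinian precisely when its small support is contained in $\Max R$. Hence the condition that $\H_\Phi^i(M^\vee)$ is ind-artinian for every $i \geq 0$ becomes $\H_\Phi^i(M^\vee) \in \supp^{-1}_{\Mod R}(\Max R)$ for every $i \geq 0$, which by Proposition \ref{923-4} applied to $\Psi = \Max R$ is equivalent to the single condition $M^\vee \in \supp^{-1}_{\Mod R}(\Phi^\complement \cup \Max R)$. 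Putting these equivalences together gives the claimed description of $\cof'\Phi$.

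For the abelianness when $\dim R \leq 2$, I would show that $\cof'\Phi$ is closed under kernels and cokernels in $\Mod R$. As a full subcategory containing $0$ and closed under finite direct sums (immediate from additivity of $\supp$ and of $(-)^\vee$ on finite sums), such closure implies abelianness. So take a morphism $f \colon M \to N$ between objects of $\cof'\Phi$. At the module level, Corollary \ref{36'} says $\supp^{-1}_{\Mod R}(\Phi)$ is a (Serre) localizing subcategory, so $\ker f$ and $\coker f$ already lie in $\supp^{-1}_{\Mod R}(\Phi)$. The real work lies in the dual condition, and this is where contravariant exactness enters: since $(-)^\vee$ is exact, one gets $f^\vee \colon N^\vee \to M^\vee$ with $\ker(f^\vee) = (\coker f)^\vee$ and $\coker(f^\vee) = (\ker f)^\vee$. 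By hypothesis $M^\vee, N^\vee \in \hh_\Phi$, and Corollary \ref{923-2}(2) gives that $\hh_\Phi$ is $(d-1)$-wide when $d = \dim R$ is finite and positive, hence wide whenever $d \leq 2$ (while the degenerate case $d = 0$ makes every module ind-artinian, so $\hh_\Phi = \Mod R$ and the claim is trivial). Therefore $(\coker f)^\vee$ and $(\ker f)^\vee$ both belong to $\hh_\Phi$, and the first part of the statement yields $\ker f, \coker f \in \cof'\Phi$.

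The step I expect to be the main obstacle is exactly this last one. A naive approach via the short exact sequences $0 \to (\Im f)^\vee \to M^\vee \to (\ker f)^\vee \to 0$ and $0 \to (\coker f)^\vee \to N^\vee \to (\Im f)^\vee \to 0$ would require $\hh_\Phi$ to be closed under submodules and quotient modules, which generally fails because $\Phi^\complement \cup \Max R$ is not specialization-closed. The reason the argument succeeds is precisely the contravariant/dual reinterpretation: under $(-)^\vee$ the roles of kernel and cokernel interchange, so one only needs $\hh_\Phi$ to be closed under kernels and cokernels in $\Mod R$, i.e., to be wide. It is this mild hypothesis, rather than the much stronger Serre condition, that Corollary \ref{923-2}(2) supplies in the range $\dim R \leq 2$.
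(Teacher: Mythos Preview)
Your proof is correct and follows the same approach as the paper's, which simply cites \cite[Theorem 2.6]{cd} and Corollary \ref{923-2} and then declares it ``routine'' that the intersection of the two abelian subcategories (via the exact contravariant functor $(-)^\vee$) is abelian. You have faithfully unpacked that routine step, including the separate treatment of the degenerate case $\dim R=0$ needed because Corollary \ref{923-2}(2) assumes positive dimension.
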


\begin{proof}
We use \cite[Theorem 2.6]{cd} and Corollary \ref{923-2}.
Both $\supp^{-1}_{\Mod R}(\Phi)$ and $\supp_{\Mod R}^{-1}(\Phi^\complement\cup\Max R)$ are abelian, and it is routine to deduce the last assertion from this.
\end{proof}


\begin{ac}
The authors thank Tsutomu Nakamura for reminding them of Lemma \ref{23} and the correctness of \cite[Theorem 5.2]{Kra08}; without this, the authors would not be able to get the proof of Proposition \ref{8-4}.
The authors also thank Lidia Angeleri H\"ugel for giving them useful observation on bireflective Giraud subcategories.
Part of this work was done during Takahashi's visit to the University of Kansas from March 2018 to September 2019, and he thanks them for their hospitality.
Also, the authors very much thank the anonymous referees for their careful reading, valuable comments and helpful suggestions.
\end{ac}

\end{document}